\theoremstyle{plain}
	\newtheorem{thm}{Theorem}[section]
	\newtheorem{lem}[thm]{Lemma}
	\newtheorem{prop}[thm]{Proposition}
	\newtheorem*{cor}{Corollary}
	\theoremstyle{definition}
	\newtheorem{defn}{Definition}[section]
	\theoremstyle{remark}
	\newtheorem*{rem}{Remark}
	\newtheorem{remnmb}{Remark}
	\newtheorem*{notation}{Notation}
\title[Finite Distributive Groups]{\large{Distributive Gruppen endlicher Ordnung}\\\vspace{2cm}\normalsize{Finite Distributive Groups}}
\author{Burstin, C. \& Mayer, W.}
\date{Received November 1927}                                           
\begin{document}

\begin{abstract}
This is a translation of \cite{original}. I have added translations for (possibly) outdated definitions in an appendix at the end.

In this paper, we define distributive groups and show some properties of them. We then concern ourselves with the homogeneity of distributive groups, before showing how they can be generated from both associative and distributive groups. After that, we look at subgroups and define an index theorem for distributive groups before describing the structure of distributive groups. Finally, we present two addenda with several theorems that we proved while trying to prove that $\left|(A.p).(A.q)\right|=\left|A\right|$.
\end{abstract}
\maketitle
\section{The Axiomatic of distributive groups}\label{chapter1}
Let there be a system of finitely or infinitely many elements:
\begin{equation*}
a\;b\;c\;\ldots
\end{equation*}
 and a composition that for two elements $a, b$ in a certain order $a, b$ assigns the element $a.b$, the result of the composition of $a$ and $b$. If the following three Axioms hold, we call such a system a \textit{distributive group}.
\begin{enumerate}[I. {Axiom}]
 \item\label{Axiom1} The composition result $a.b$ of any two elements $a$ and $b$ of the system is itself an element of the system.
 \item\label{Axiom2} If $a$ and $b$ are any two elements of the system, then the two equations 
 \begin{equation*}
 a.x = b \text{ and } y.a=b
 \end{equation*}
 have exactly one and only one solution in the system.
 
 (Thus, this Axiom requires the existence and uniqueness of the inverse operations).
 \item\label{Axiom3} Let $a$, $b$ and $c$ be any three elements of the system then there exist the following two, identical relations
 \begin{equation*}
 (a.b).c = (a.c).(b.c) \text{ and } c.(a.b) = (c.a).(c.b)
 \end{equation*}
 \end{enumerate}
 Axioms \ref{Axiom1} and \ref{Axiom2} are Axioms of classical group theory; instead of Axiom \ref{Axiom3} we have the Axiom of associative composition: $(a.b).c = a.(b.c)$. Thus, in order to distinguish them from distributive groups, we will call the classical groups \textit{associative} groups.
 
If there exists such an Axiomatic system, the questions of independence and (non-) contradiction of the Axioms naturally arise.
We show the independence of Axiom \ref{Axiom1} from Axioms \ref{Axiom2} and \ref{Axiom3}:

Let the system of elements be the (positive and negative) integers; let the composition be $a.b = \frac{a+b}{2}$. Here, Axioms \ref{Axiom2} and \ref{Axiom3} hold but Axiom \ref{Axiom1} does not.

In a system of \textit{finitely many} elements, Axiom \ref{Axiom1} follows from \ref{Axiom2}.

A second example shows the independence of Axiom \ref{Axiom2} from Axioms \ref{Axiom1} and \ref{Axiom3}.

Let the system consist of $n$ elements, $a_1,a_2,\ldots,a_n$ and let the composition be $a_i.a_k = a_k$. Here, Axioms \ref{Axiom1} and \ref{Axiom3} hold but Axiom \ref{Axiom2} does not.

Every associative group with more than one element is an example for the independence of Axiom \ref{Axiom3} from the other two. For this, we will show that such an associative group cannot be distributive.

The non-contradiction of the Axioms is shown by the creation of distributive groups in the following examples:
\begin{enumerate}
\item Let the system of elements be the complex numbers. To the numbers $a$ and $b$, the composition $a.b$ shall assign  the number
\begin{equation*}
a.b = \alpha a + \beta b
\end{equation*}
where $\alpha$ and $\beta$ are fixed and such that $\alpha + \beta = 1$.
\item Let the system of elements be the positive real numbers (except zero) and let the composition $a.b$ assign to the numbers $a$ and $b$ the number $a.b=\sqrt{ab}$ (the geometric mean). More generally, $a.b = a^\alpha b^\beta$ with $\alpha + \beta = 1$, $\alpha,\beta\neq0$. Here, $a^\alpha = e^{\alpha\ln\alpha}$. 
\item Let the system of elements be the points of the $n$-dimensional affine space. Let the composition $a.b$ assign to the points $a$ and $b$ a point on the line $\overline{ab}$ that divides this line by a certain proportion $\alpha:\beta$ (e.g. the mean of the line $\overline{ab}$).
\item Let the system of elements be the points of the $n$-dimensional projective space except the points on an $(n-1)$ dimensional hyperplane $E_{n-1}$ of this space\footnote{\textit{Dual}: Let the system of the elements be the $E_{n-1}$ of a projective $R_n$ except the $E_{n-1}$ going through a certain point in $R_n$.}.

Then the point $a.b$ shall be on the line $ab$ such that the cross-ratio $(a,b,a.c,b.c)$, where $c$ is the intersection point between the line $ab$ and $E_{n-1}$, has a certain, fixed, value $\chi$.

If one introduces projective coordinates, if $a_1,\ldots,a_{n+1};b_1,\ldots,b_{n+1}$ are the coordinates of the points $a$ and $b$ and if $A_ix_i=0$, $i=1,\ldots,n+1$ is the equation of $E_{n-1}$, then the point $a.b=d$ has the projective coordinates
\begin{equation*}
d_i=A_i(a_ib_t - \chi b_i a_t) \text{ where } i,t = 1,\ldots,n+1
\end{equation*}
Showing that this is indeed a group follows easily from this formula; we will show a very simple method that can be used for this in section \ref{chapter2}.
\end{enumerate}

Before starting to give examples of finite distributive groups, i.e. groups with finitely many elements, we will discuss a characteristic property of distributive groups. In an associative group, there is always one element, the unit element with notation $e$ for which it holds that $a.e=e.a=a$ for each element $a$ in the associative group. In a distributive group there is no such element, rather, there is a such \textit{homogeneity} that any property of one element of this group holds for each element of this group (see section \ref{chapter2}).

We want to show that any non-trivial distributive group (i.e. with more than one element) cannot include a unit element. If we set in one of the relations of Axiom \ref{Axiom3} $b=c=a$, with $a$ any element in the group, then we have $(a.a).a = (a.a).(a.a)$ and hence, because of Axiom \ref{Axiom2} the important relation
\begin{equation*}
a.a=a
\end{equation*}
which thus holds for all elements in a distributive group. If now a unit element $e$ were to be an element of this group, then we would have $a.e=a.a$ and hence $a=e$, that is, every element in this group would be the unit element and hence the group would be trivial. \textit{q.e.d.}

The trivial group with only one element $a$ for which Axiom \ref{Axiom1} holds via $a.a=a$ is hence an example of a group which is both associative and distributive.

(Such a group can only have one element since otherwise it would not be distributive. On the other hand, in order to be associative it must include the unit element.)

\subsubsection*{A distributive group with only two elements does not exist.} 

Let $a$ and $b$ be those two elements, then from $a.a=a$ and $b.b=b$ we get that $a.b\neq a,b$ so that Axiom \ref{Axiom1} does not hold.

\subsubsection*{The distributive group with three elements exists.} In fact, it is commutative.
Let $a,b,c$ be the elements of this group, so we have that $a.a=a$, $b.b=b$ and $c.c=c$. $a.b\neq a,b$ so $a.b=c$. Similarly, $a.c =b$ and $b.c=a$. The Cayley table of this group is
\begin{center}
\begin{tabular}{c|ccc}
&a&b&c\\
\hline
a&a&c&b\\
b&c&b&a\\
c&b&a&c
\end{tabular}
\end{center}
\subsubsection*{Now we show that every finite commutative distributive group has odd order.}
Let $G\left\{a_1,a_2,\ldots,a_n\right\}$ be a commutative distributive group of order $n$. We extract one element, $a_1$, say and have a closer look at the remaining elements, $a_2,\ldots,a_n$.

Let $a_i$ be an element of this remaining elements, then there exists an "assigned" element $a_{\sigma_i}$ in $G$ such that
\begin{equation*}
a_i.a_{\sigma_i} = a_1
\end{equation*}
Since $a_i\neq a_1$, $a_{\sigma_i}\neq a_1,a_i$. Thus, $a_{\sigma_i}$ is an element of the remaining elements which is not equal to $a_i$. Since the group is commutative, $a_{\sigma_i}$ has $a_i$ as its assigned element. Thus, the elements of the remaining elements are paired in assigned element pairs and hence the order of the remaining elements is even. Therefore, the order of $G$ is odd, \textit{q.e.d.}
\subsubsection*{Conversely, for each odd number $2N+1$ there exists a commutative distributive group of this order.}
We call the elements of this group $1,2,\ldots,2N+1$ and let the composition element $a.b$ of the elements $a$ and $b$ be as follows
\begin{equation*}
a.b\equiv (n+1) (a+b) \mod (2n+1)
\end{equation*}
It is easy to see that this is indeed a commutative group.

Geometrically, this is the group of vertices of a $2n+1$-gon where the composition element $c=a.b$ is then always lying on the perpendicular bisection of the line with endpoints $a$, $b$.
\begin{proof}
Since $a(n+1)\equiv \frac{a}{2}\mod (2n+1)$, we can rewrite $a.b$ as
\begin{equation*}
a.b\equiv \frac{a+b}{2}\mod (2n+1)
\end{equation*}
Now let $a<b$. Then
\begin{enumerate}[1.]
\item \begin{equation*}b=a+2d,\ c\equiv \frac{2a+2d}{2}=a+d\mod(2n+1)\end{equation*} that is,\begin{equation*} c=a+d, b=c+d\end{equation*}
\item \begin{equation*}b=a+2d+1,\ c\equiv\frac{a+b+2n+1}{2}\equiv a+d+n+1\equiv b+(n-d)\mod (2n+1)\end{equation*} that is, \begin{equation*}c=b+(n-d), c+(n-d)=b+2(n-d)=a+2n+1\equiv a\end{equation*}
\end{enumerate}
\end{proof}

Even though there are no commutative groups of even order, there are non-commutative ones. The \textit{group of order four} is one example. If we let $a_1,a_2,a_3,a_4$ be the elements of this group, we can always assume $a_1.a_2=a_3$ (since $a_1.a_2$ can only be $a_3$ and $a_4$ and we can always renumber to get $a_1.a_2=a_3$). This means that in the Cayley table the following compositions are known:
\begin{center}
\begin{tabular}{c|cccc}
&$a_1$&$a_2$&$a_3$&$a_4$\\
\hline
$a_1$&$a_1$&$a_3$&.&.\\
$a_2$&&$a_2$&&\\
$a_3$&&&$a_3$&\\
$a_4$&&&&$a_4$
\end{tabular}
\end{center}
In the first row, $a_2$ and $a_4$ have to be in the two free places. Since $a_1.a_4\neq a_4$, $a_1.a_3=a_4$ and $a_1.a_4=a_2$. With the help of Axiom \ref{Axiom2}, one can fill in the other free places as well and this leads to:
\begin{center}
\begin{tabular}{c|cccc}
&$a_1$&$a_2$&$a_3$&$a_4$\\
\hline
$a_1$&$a_1$&$a_3$&$a_4$&$a_2$\\
$a_2$&$a_4$&$a_2$&$a_1$&$a_3$\\
$a_3$&$a_2$&$a_4$&$a_3$&$a_1$\\
$a_4$&$a_3$&$a_1$&$a_2$&$a_4$
\end{tabular}
\end{center}
With this table, it is easy to show that Axiom \ref{Axiom3} holds. Thus, there is one and only one distributive group of order four.

There is, again, only one group of order five, the commutative one described earlier, while there is no group of order six.

The question of classifying all groups of a given order seems to be as hard for distributive groups as is the equivalent question for associative groups. There will be more examples in section \ref{chapter3}.

\section{homogeneity of distributive groups}\label{chapter2}
Let $G$ be a distributive group and let $A=\left\{a_1,a_2,\ldots\right\}$ be a subgroup of $G$ ($A$ does not have to be countable. The notation $a_i$ is chosen for simplicity). Let $p$ be any element of $G$, then $B=A.p = \left\{b_1=a_1.p,b_2=a_2.p,\ldots\right\}$ is a subgroup of $G$, isomorphic to $A$.

Here, similarly to classical group theory, we call two groups $\left\{a_1,a_2,\ldots\right\}$ and $\left\{b_1,b_2,\ldots\right\}$ uniquely isomorphic\footnote{In the original, \textit{einstufig isomorph}. For a definition, see Definition \ref{defn:isomorph}} if a bijective map $a_i\leftrightarrows b_i$ can be defined such that to the composition $a_i.a_k$ of $a_i$ and $a_k$ can be assigned a unique composition $b_i.b_k$ of $b_i$ and $b_k$, where $b_i$ is assigned to $a_i$ and $b_k$ is assigned to $a_k$.
\begin{proof}
If $A$ is a finite group, $a_i.a_k=a_l$ implies via a right-sided composition with $p$: $b_i.b_k=b_l$. The Cayley table of the system $\left\{b_1,b_2,\ldots\right\}$ is therefore the same as the Cayley table of the system $\left\{a_1,a_2,\ldots\right\}$ if the letter $b$ is replaced with the letter $a$. Thus, $B$ is a subgroup of $G$, isomorphic to $A$.

For infinite groups, the same proof holds via an extension of Cayley tables to infinite groups.

If $A\cong G$, the result is: If $p$ is any element of $G$, then $G$ is isomorphic to itself via the map $a_i\rightleftarrows a_i.p$.
\end{proof}
Due to Axiom \ref{Axiom2}, we can always choose $p$ such that (assuming $a_i,a_k$ fixed) $a_i.p=a_k$. Thus,

\subsubsection*{For any distributive group there exists a unique isomorphism such that a fixed element $a_i$ gets mapped to $a_k$.}

From this, the homogeneity of distributive groups follows.

\subsubsection*{Every group theoretic property, which holds for at least one element in a distributive group, holds for all elements of this group}

\begin{enumerate}[ {Conclusion} 1:]
\item If an element $a_i$ is an element of exactly $h$ different subgroups of order $v$ of a group $G$, then this holds for each element in $G$.

Thus, if $G$, a finite group of order $N$, has $n$ different subgroups of order $v$, namely \begin{equation*}A_1, A_2,\ldots,A_n\end{equation*} then each element of $G$ is an element of exactly $h$ of those subgroups. Hence, \begin{equation*}Nh=nv\end{equation*}
\item Let $G$ be a finite group of order $N$, let $A_1$ be a subgroup of $G$ with order $v$ and let \begin{equation*}A_2,A_3,\ldots,A_m\end{equation*} be all the subgroups of $G$ that are isomorphic to $A_1$. It is easily shown that all groups in the system
\begin{equation}\label{isosubgroups}A_1,A_2,\ldots,A_m\end{equation} are pairwise isomorphic and hence $A_i.p$ is isomorphic to $A_i$ and thus to each $A_j$. Therefore, $A_i.p$ is also an element of this system \eqref{isosubgroups} of isomorphic subgroups.

Now if $a_i$ is an element of $q$ of those groups \eqref{isosubgroups} then this holds for every element of $G$.

Counting all elements of $G$ that are elements of groups in \eqref{isosubgroups}, we get
\begin{equation*}
Nq=mv
\end{equation*}
\end{enumerate}
\subsubsection*{A closer look at the interesting case $q=1$,} for which the group $G$ can be described via its subgroups in the system \eqref{isosubgroups} as 
\begin{equation*}
G=A_1+A_2+\cdots+A_m
\end{equation*}
Now let $A=A_i$, for some $i$, then the union of the \textit{different} subgroups of the union of \begin{equation*}A.a,A.b,\ldots,\text{ etc., where } G=\left\{a,b,\ldots\right\}
\end{equation*} is the same as the groups in \eqref{isosubgroups}.

This leads to the following proposition.
\begin{prop}\label{prop1}
Let $A,B$ be subgroups of $G$ with $B\neq A$, $A\cong B$. If $A\cap B=\emptyset$, then the following holds:
\begin{equation*}
G=A+A.\beta+A.\gamma+\cdots+A.\epsilon
\end{equation*}
Considered as elements, the system of subgroups $A,A.\beta,\ldots$ is itself a distributive group $\Gamma$ which is $v$-step isomorphic\footnote{In the original \textit{$v$-stufig isomorph}. For a definition, see Definition \ref{defn:isomorph}} to $G$.
\end{prop}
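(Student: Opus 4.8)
The plan is to realise $\Gamma$ as a quotient of $G$: partition $G$ into the cosets of $A$ and push the composition of $G$ down onto the set of cosets. The engine is one consequence of Axiom \ref{Axiom3}: for every $g\in G$ the right translation $R_g\colon x\mapsto x.g$ and the left translation $L_g\colon x\mapsto g.x$ are automorphisms of $G$. Each is bijective by Axiom \ref{Axiom2}, and Axiom \ref{Axiom3} gives $R_g(x.y)=(x.y).g=(x.g).(y.g)=R_g(x).R_g(y)$, and dually for $L_g$. Hence, for any subgroup $S\le G$, the sets $R_g(S)=S.g$ and $L_g(S)=g.S$ are again subgroups isomorphic to $S$; in particular every coset $A.g$ is a subgroup isomorphic to $A$, and $A.\alpha=A$ whenever $\alpha\in A$.

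First I would invoke the analysis of the case $q=1$ made just before this proposition. The hypothesis that $A$ and $B$ are distinct, isomorphic and \emph{disjoint} places us exactly in that case: the subgroups of $G$ isomorphic to $A$ are pairwise disjoint, they coincide with the distinct cosets $\{A.g:g\in G\}$ — each of order $v=|A|$ — and together they exhaust $G$. Listing them as $A=A.\alpha$ (any $\alpha\in A$), $A.\beta$, $A.\gamma$, \ldots, $A.\epsilon$ yields the asserted decomposition $G=A+A.\beta+A.\gamma+\cdots+A.\epsilon$. Write $\Gamma$ for this set of cosets and, for $g\in G$, let $\pi(g)$ denote the unique coset containing $g$. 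By the first paragraph together with the identification ``cosets of $A$ $=$ isomorphism class of $A$'', every automorphism of $G$, and in particular every $R_g$ and every $L_g$, permutes $\Gamma$.

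Next I would put a composition on $\Gamma$ by showing that $\pi(g.g')$ depends only on $\pi(g)$ and $\pi(g')$. If $g_1,g_1'$ lie in a common coset $C$, then $R_{g_2}(C)$ is again a coset (as $R_{g_2}$ permutes $\Gamma$) and contains $g_1.g_2=R_{g_2}(g_1)$ and $g_1'.g_2=R_{g_2}(g_1')$, whence $\pi(g_1.g_2)=\pi(g_1'.g_2)$; symmetrically, via $L_{g_1}$, one gets $\pi(g_1.g_2)=\pi(g_1.g_2')$ whenever $\pi(g_2)=\pi(g_2')$. Combining the two, $\pi(g)\odot\pi(g'):=\pi(g.g')$ is a well-defined operation on $\Gamma$, and $\pi\colon G\to\Gamma$ is a surjective homomorphism for it. This is exactly the statement that the partition into cosets of $A$ is a congruence, and — as with associative quotients — it is here that the ``normality'' hidden in the hypothesis does its work.

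Finally I would verify the three axioms for $\Gamma$ and read off the index. Axiom \ref{Axiom1} is automatic, as $\Gamma$ is finite and $\odot$ is total. Axiom \ref{Axiom3} transfers along the homomorphism: choosing preimages $u,v,w$ of $U,V,W\in\Gamma$, one has $(U\odot V)\odot W=\pi\bigl((u.v).w\bigr)=\pi\bigl((u.w).(v.w)\bigr)=(U\odot W)\odot(V\odot W)$, and dually for the second identity. For Axiom \ref{Axiom2}, given $U,V$, pick $u\in U$ and $v\in V$, solve $u.x=v$ in $G$; then $\pi(x)$ solves $U\odot X=V$, and it is the only solution, since $L_u$ permutes $\Gamma$ and so $\pi(u.x')=\pi(u.x)$ forces $\pi(x')=\pi(x)$; the equation $Y\odot U=V$ is treated the same way with $R_u$. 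Hence $\Gamma$ is a distributive group; $\pi$ is onto, and each of its fibres is a coset of $A$, so has exactly $v$ elements — that is, $G$ is $v$-step isomorphic to $\Gamma$. The one genuinely substantial ingredient is the import of the second paragraph: that the disjointness of a single pair forces the full coset partition of $G$ (the case $q=1$). Granted that, the remainder is the routine passage to a quotient, available precisely because in a distributive group every translation is an automorphism.
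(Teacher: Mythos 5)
Your proposal is correct and follows essentially the same route as the paper: both arguments rest on the fact that left and right translations carry the pairwise disjoint system of subgroups isomorphic to $A$ into itself, so that two such subgroups sharing a single element must coincide, and both then verify Axioms \ref{Axiom1}--\ref{Axiom3} for the induced operation by choosing representatives. Your phrasing in terms of a congruence and quotient map $\pi$ is only a repackaging of the paper's verification that the complex product $A_i.A_k$ collapses to a single member of the system.
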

To properly understand this proposition, we have to explain the composition of two groups $C\left\{c_1,c_2,\ldots\right\}$ and $D\left\{d_1,d_2,\ldots\right\}$. In particular, $C.D$ is the system of all elements $c_i.d_k$, $i,k=1,2,\ldots$. 
\begin{proof}We now have to show Axiom \ref{Axiom1}, that is $A_i.A_k=A_l$, i.e. a group in the system \eqref{isosubgroups}.

Let $a^r_1,a^r_2,\ldots,a^r_v$ be the elements of the group $A_r$, $r=1,\ldots,m$ and consider the two isomorphic groups
\begin{equation*}a^i_1.A_k\text{ and } A_i.a_l^k
\end{equation*} both of which are elements of the system \eqref{isosubgroups}. Their intersection includes the element $a_1^ia_l^k$ and both must therefore coincide. Thus we have
\begin{equation*}
A_l=a_1^i.A_k=A_i.a_1^k=A_i.a_2^k=\cdots=A_i.A_k
\end{equation*}
and Axiom \ref{Axiom1} holds.

In order to prove Axiom \ref{Axiom2}, we have to show that the equation
\begin{equation*}
A_i.X=A_l
\end{equation*}
has exactly one solution in the system \eqref{isosubgroups}.

Certainly, there is one element $x=a_l^k$ in $G$ such that $a_1^i.a_1^k=a_1^l$. This means, however, that $A_i.A_k=A_l$, that is, $X=A_k$. If there would be a second solution, $A_j$, say, that is $A_i.A_j=A_i=A_k$, this would imply that $a_1^i.A_j=a_1^i.A_k$ and hence $A_j=A_k$. Thus, Axiom \ref{Axiom2} holds. For $Y.A_l=A_k$, the proof is similar.

The proof of Axiom \ref{Axiom3} is straightforward as well: $(A_i.A_k).A_l$ and $(A_i.A_l).(A_k.A_l)$ are groups in \eqref{isosubgroups}. Both groups include the element
\begin{equation*}
(a_1^i.a_1^k).a_1^l=(a_1^i.a_1^l).(a_1^k.a_1^l)
\end{equation*}
and do therefore coincide.
\end{proof}

\subsubsection*{We now give three examples of systems of isomorphic subgroups for which q=1 holds}
\begin{enumerate}[{Example} 1:]
\item Let $G$ be a group of order $N$ and let $A$ be a subgroup of $G$ of order $v$. Let $A$ have the property that the intersection of $A$ with any other subgroup of $G$ of order $v$ be empty. Then $A$ is also disjoint to any subgroup of $A$ which are isomorphic to $A$ and Proposition \ref{prop1} holds.
\item\label{eg2} Let $G$ be a group of order $N$ and let $a_1\in G$. Then the elements $x\in G$, where 
\begin{equation}\label{equation2}a_1.x=x.a_1\end{equation} are elements of a subgroup $A$ of $G$ which is disjoint to any other isomorphic subgroup of $G$.

Since there are only finitely many solutions to equation \eqref{equation2}, namely $x=a_1,a_2,\ldots$, we only have to check Axiom \ref{Axiom1} to show it is a group. In particular, since $a_1.x=x.a_1$ and $a_1.y=y.a_1$, we have that $a_1.(x.y)=(x.y).a_1$ that is, if $x$ and $y$ are a solution, $x.y$ is a solution as well. Let $A=\left\{a_1,a_2,\ldots,a_v\right\}$ be the set of all solutions, then $a_1$ commutes with every other element in $A$. Indeed, for any element $a_i\in A$, $A$ is the subgroup of $G$ of all elements that commute with $a_i$, since any element that commutes with $a_i$ also commutes with $a_1$.

Now let $B\cong A$, another subgroup of $G$. $B$ also has order $v$ and, due to the homogeneity of the group $G$, $B$ includes all elements that commute with each other. Clearly, if $A$ and $B$ are not disjoint, they must be equal, since one element is enough to generate $A$ and $B$, respectively. This proves the statement.
\item First, let us introduce the following notation:
\begin{eqnarray*}
a.(a.b)&=&a^2.b\\
a.\left[a.(a.b)\right] &=&a^3.b
\end{eqnarray*}
Let $G$ be a group of order $N$ and let $a_1\in G$. We claim: All elements $x\in G$ with $a_1^r.x=x$ constitute a subgroup $A$ in $G$ which is disjoint to any other subgroup of $G$ which is isomorphic to $A$. The proof of this being a group is done similar as in Example \ref{eg2}. Let $A$ be the group of solutions $\left\{a_1,a_2,\ldots,a_v\right\}$ and call $r$ the \textit{degree} of the group $A$ of order $v$. Then, since $a_1^r.a_i=a_i$ for $i=1,2,\ldots,v$ and due to the homogeneity of $A$, we have that $a_t^r.a_i=a_i$ for $i,t=0,2,\ldots,v$, that is, $A$ is generated by any one of its elements. Similarly, if $B=\left\{b_1,b_2,\ldots,b_v\right\}$ with $B\cong A$ and $B$ a subgroup of $G$, then $B$ is generated by any one of its elements $b_i$ with $b_i^r.x=x$ \footnote{This is due to the homogeneity of $G$ itself}. Thus, if $A$ and $B$ are not disjoint, $A=B$.
\end{enumerate}
\begin{rem}
Instead of $a_1^r.x=x$ we could have also used $a_1.\left[(x.a_1).a_1\right]=x.a_1$ or indeed any other similar equation, as long as $x$ only occurs once on each side.
\end{rem}
\section{Principles of the generation of distributive groups}\label{chapter3}
\subsection{Generation from Associative Commutative Groups}\label{chapter3.1}
Let $G_\alpha=\left\{a_1,a_2,\ldots,a_{2n+1}\right\}$ be an associative, commutative group of order $2n+1$, let $\alpha\in\mathbb{N}$ and let $\alpha$ and $\alpha-1$ be coprime with $2n+1$ (for example $\alpha=2$ or $\alpha=n+1$). The composition of the two elements $a_i,a_k\in G$ shall be denoted by $a_i\circ a_k$ and then the following holds:
\begin{equation*}
a_i\circ a_k=a_k\circ a_i\text{ and }(a_i\circ a_k)\circ a_l = a_i\circ(a_k\circ a_l)
\end{equation*}
Furthermore, $\alpha_i^\alpha=a_i\circ a_i\circ\ldots\circ a_i$, iterated $\alpha$ times. For all $g\in G_a$, there is an $i\in1,\ldots,2n+1$ such that $a_i^\alpha=g$. This holds since if not, there is some $i,k$ such that $a_i\neq a_k$ with $a_i^\alpha=a_k^\alpha$. Let $a_k^{-1}$ be the inverse element of $a_k$ in $G_a$, then $a_i^\alpha\circ(a_k^{-1})^\alpha=1=(a_i\circ a_k^{-1})^\alpha$. However, $a_i\circ a_k^{-1}$ cannot be the unit element, since otherwise $a_k^{-1}=a_i^{-1}$, that is $a_i=a_k$ which is a contradiction. Now let $a_l=a_i\circ a_k^{-1}$ then $a_l^\alpha=1$ for some $\alpha$, thus there must be a subgroup of $G_a$ of order $v$, where $v$ is a divisor of $\alpha$. However, $v$ would then divide $2n+1$ as well, which, contrary to the assumptions, means that $\alpha$ and $2n+1$ are not coprime. Hence, the equation $x^\alpha=a_i$, i.e. $x=a_i^{\frac{1}{\alpha}}$ is uniquely solvable in $G_a$\footnote{We can always find a $\beta\in\left\{1,\ldots,2n+1\right\}$ such that $a^{\frac{1}{\alpha}}=a^\beta$. In fact, since $a=a^{\alpha\beta}$, $a^{\alpha\beta-1}=1$. Clearly, this holds for $\alpha\beta-1=2n+1$ i.e. for $\alpha\beta\equiv1\mod(2n+1)$ and since $\alpha$ and $2n+1$ are coprime, such a $\beta$ exists.}. The same holds for $\alpha-1$ instead of $\alpha$.

We will now show that we can understand the elements of $G_a$ as the elements of a distributive group $G_a$ of order $2n+1$ if we use the following composition
\begin{equation*}
a_i.a_k=a_i^\alpha\circ a_k^{1-\alpha}
\end{equation*}
Axioms \ref{Axiom1} and \ref{Axiom2} hold true due to our assumptions so we only have to show that Axiom \ref{Axiom3} holds.

We have
\begin{equation*}
(a_i.a_k).a_l=(a_i^\alpha.a_k^{1-\alpha})^\alpha\circ a_l^{1-\alpha}=a_i^{\alpha^2}\circ a_k^{\alpha\left(1-\alpha\right)} \circ a_l^{1-\alpha}
\end{equation*}
as well as
\begin{equation*}
(a_i.a_l).(a_k.a_l)=(a_i^\alpha\circ a_l^{1-\alpha})^\alpha\circ(a_k^\alpha\circ a_l^{1-\alpha})^{1-\alpha}=a_i^{\alpha^2}\circ a_k^{\alpha(1-\alpha)}\circ a_l^{1-\alpha}
\end{equation*}
Thus, Axiom \ref{Axiom3} holds.

\begin{remnmb}
For the special, commutative group $G_a$ of order $2n+1$ whose elements are $1,2,\ldots,2n+1$ with composition $a\circ b\equiv a+b\mod 2n+1$, $a^\alpha=\alpha a$, i.e.
\begin{equation*}
a.b\equiv\alpha a+(1-\alpha)b\mod 2n+1
\end{equation*}
\end{remnmb}
\begin{remnmb}
Let $G_a$ be the infinite commutative group with the elements being $\mathbb{R}$ and whose composition is $a.b=a+b$. Then $a.b=\alpha a+\beta b$ with $\alpha+\beta=1$ is the composition of the respective distributive group.
\end{remnmb}
\begin{remnmb}
Let $A_a$ be a subgroup of $G_a$ of order $v$, then v is coprime to $\alpha$ and $\alpha-1$ since $v$ divides 2n+1. Let $b_1,b_2,\ldots b_v$ be the elements of $A_a$, then this system together with the composition $b_i.b_k=b_i^\alpha\circ b_k^{1-\alpha}$ is a distributive group. Thus, all elements of the subgroup $A_a$ of $G_a$ constitute a subgroup $A_d$ of a group $G_d$.

The elements
\begin{equation*}
A_d.p=\left\{b_1.p,b_2.p,\ldots,b_v.p\right\}=\left\{b_1^\alpha\circ p^{1-\alpha},\ldots,b_v^\alpha\circ p^{1-\alpha}\right\}=\left\{b_1^\alpha,\ldots,b_v^\alpha\right\}\circ p^{1-\alpha}=A^\alpha\circ p^{1-\alpha}
\end{equation*}
of the distributive subgroup $A_d.p$ constitute a "coset"\footnote{In the original \textit{Nebengruppe} as compared to \textit{Untergruppe}, which is a subgroup.} $A_\alpha\circ p^{1-\alpha}$ of the subgroup $A_a$ of $G_a$. Thus, if $A_d.p$  and $A_d.q$ have an element in common, they are in fact equal and the following decomposition holds:
\begin{equation}\label{subgroupgroup}
G_d=A_d+A_d.p+\cdots+A_d.t
\end{equation}
Now let 
\begin{eqnarray*}
b_i.p&=b_i^\alpha\circ p^{1-\alpha}&\in A_d.p\\
b_j.q&=b_j^\alpha\circ q^{1-\alpha}&\in A_d.q
\end{eqnarray*} then
\begin{eqnarray*}
(b_i.p).(b_j.q)&=&(b_i^\alpha\circ p^{1-\alpha})^\alpha\circ(b_j^\alpha\circ q^{1-\alpha})^{1-\alpha}\\
&=&(b_i^\alpha\circ b_j^{1-\alpha})^\alpha\circ(p^\alpha\circ q^{1-\alpha})^{1-\alpha}\\
&=&(b_i.b_j).(p.q)
\end{eqnarray*}
However, since this element is an element of $A_d.(p.q)$, the subgroups in \eqref{subgroupgroup} constitute, taken as elements, another distributive group (Axioms \ref{Axiom2} and \ref{Axiom3} hold with proof as before).

Furthermore, we have the following composition:
\begin{equation*}
(b_i.p).(b_j.q)=(b_i.b_j).(p.q)
\end{equation*}
Let $B_d=\left\{b_1,\ldots,b_\mu\right\}$ and $C_d=\left\{c_1,\ldots,c_r\right\}$ be two subgroups of $G_d$. Then from 
\begin{equation*}
(b_i.c_k).(b_j.c_i)=(b_i.b_j).(c_k.c_i)
\end{equation*}
it follows that the elemental system
\begin{equation*}
\left\{\ldots,(b_i.c_k),\ldots\right\}, i=1.\ldots \mu, j=1,\ldots,r
\end{equation*}
is a group.
\end{remnmb}
\subsection{Generation of Distributive Groups from Distributive Groups}
\begin{rem}
The same holds for associative Groups.
\end{rem}

Let $A=\left\{a_1,\ldots,a_v\right\}$ and $B=\left\{b_1,\ldots,b_\mu\right\}$ with their respective compositions $a_i.a_k$ and $b_i\odot b_k$ be two distributive groups of order $v$ and $\mu$, respectively. Then the elemental system $\left\{\ldots,(a_i,b_k),\ldots\right\}$ with the composition $(a_i,b_k)\times(a_j,b_l)=(a_i.a_j,b_k\odot b_l)$ is also a distributive group and has order $v\mu$. It is immediately clear that Axioms \ref{Axiom1} to \ref{Axiom3} hold.

A group that has been generated like this such that every element $(a_i,b_k)$ is numbered by two indices shall be called \textit{Double Index Group}\footnote{In the original \textit{Zweiindizesgruppe}}. Similarly, triple index groups and higher can be generated.
\begin{rem}
If the composition $a.b=a^\alpha\circ b^{1-\alpha}$ gets replaced by the composition $a.b=a^\alpha\circ b^\beta$, where $\alpha, \beta$ are coprime with $2n+1$, then from the Cayley table it is clear that Axioms \ref{Axiom1} and \ref{Axiom2} hold. Instead of Axiom \ref{Axiom3}, we have
\begin{equation*}
(a.d).(b.d)=(a.b).(d.d)\text{, and } (d.a).(d.b)=(d.d).(a.b)
\end{equation*}
or more generally
\begin{equation*}
(a.b).(c.d)=(a.c).(b.d)
\end{equation*}
For those groups the relationship with abelian groups of odd order as outlined in this chapter hold.

If $A$ is a subgroup, then $A.p$ is a "sidegroup", similarly as for associative groups.
\end{rem}
\section{Subgroups, Index Theorem}\label{chapter4}
\begin{thm}\label{simplethm}
A group is called \textit{simple} if it has no proper subgroup of order bigger than one. It follows that any simple group is completely described by any two of its elements.
\end{thm}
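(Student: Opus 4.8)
The plan is to read ``completely described by two elements'' as ``generated by two elements'': I will fix a simple distributive group $G$ and two \emph{distinct} elements $a,b\in G$, and show that the smallest subgroup of $G$ containing $a$ and $b$ is already $G$ itself, so that every element of $G$ arises from $a$ and $b$ by iterated application of the composition and its inverse operations. That two distinct elements are really needed is built into the situation: by the identity $a.a=a$ the singleton $\{a\}$ is itself a subgroup, so one element determines only the trivial group on it.

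First I would make the notion of ``generated subgroup'' precise. Let $H\subseteq G$ be the smallest subset that contains $a$ and $b$ and is closed under the composition and under both of its inverse operations, i.e.\ closed under forming, for any $x,y\in H$, the unique $z,z'\in G$ with $x.z=y$ and $z'.x=y$ (these exist and are unique by Axiom~\ref{Axiom2}). Such a smallest set exists: intersect all subsets of $G$ that contain $a,b$ and have these closure properties, noting that $G$ is one of them.

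Next I would verify that $H$ is a distributive group, hence a subgroup of $G$ in the sense of Section~\ref{chapter2}: Axiom~\ref{Axiom1} holds on $H$ by construction; Axiom~\ref{Axiom2} holds because the solutions of $x.z=y$ and $z'.x=y$ lie in $H$ by construction, while their uniqueness is inherited from $G$; and Axiom~\ref{Axiom3} holds on $H$ because it is an identity valid throughout $G$, hence in particular among the elements of $H$. Since $a\neq b$, we have $a,b\in H$ with $a\neq b$, so $|H|\geq 2>1$. By the definition of \emph{simple}, $G$ has no proper subgroup of order greater than $1$; therefore the subgroup $H$ cannot be proper, i.e.\ $H=G$. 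Thus $G$ is generated by the two elements $a$ and $b$, which is the assertion.

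The only point that needs genuine care is the first one, fixing the meaning of ``completely described'' as ``generated'' and checking that closing $\{a,b\}$ under the composition together with its two inverse operations does produce something satisfying all three axioms (so that the word ``subgroup'' in the hypothesis of simplicity actually applies to it). Once that is settled, simplicity closes the argument in a single line. A secondary subtlety worth flagging is exactly the distinctness of the two elements, without which there is nothing to generate.
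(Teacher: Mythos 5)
Your argument is correct: forming the smallest subset of $G$ containing two distinct elements $a\neq b$ and closed under the composition and its two inverse operations does yield a subgroup (Axioms \ref{Axiom1}--\ref{Axiom3} hold on it for exactly the reasons you give), it has order at least $2$, and simplicity then forces it to be all of $G$. You are also right that distinctness is essential, since $a.a=a$ makes every singleton a subgroup. The one thing to be aware of is that the paper itself treats this assertion as immediate from the definition (the ``It follows that'' in the statement carries no further argument), and the proof environment attached to the theorem is actually devoted to a different, stronger consequence: that for a simple subgroup $A$ of $G$ and any $p\in G$ the complex $A.(A.p)$ collapses to a single coset $A.t$, which is proved by exhibiting common elements of the simple groups $a_j.B$ and $A.b_1$ and invoking precisely the two-element determination you establish. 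So your proposal supplies the missing justification for the literal statement rather than reproducing the paper's argument; it is the natural (essentially the only) proof of that statement, and in the finite setting you could even simplify it slightly, since a finite subset closed under the composition alone automatically satisfies Axiom \ref{Axiom2} internally, as the paper notes elsewhere.
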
 
\begin{proof}
Let $A=\left\{a_1,\ldots,a_v\right\}$ be a simple subgroup of a group $G$ and let $p\in G$. Then we will show that $A.(A.p)$ is an \emph{r-parameter} group $A.t$. \footnote{In the original \textit{r-gliedrige Gruppe}. A definition is provided in Definition \ref{defn:rgliedrig}.}  We denote the group $A.p$ by $B=\left\{a_i.p=b_i, i=1,\ldots,v\right\}$. Then the following holds
\begin{equation*}
a_i.b_1=a_i.(a_1.p)=(a_i.a_1).b_i=a_j.b_i
\end{equation*}
where $a_i.a_1=a_j$.

Let $a_i\neq a_1$, then $a_i\neq a_j$ and also $b_i\neq b_j$. The two groups $a_j.B$ and $A.b_1$ are uniquely isomorphic to $A$, so are simple as well. $a_j.b_1$ and $a_i.b_1=a_j.b_i$ are elements of both groups and so they are equal as simple groups, that is
\begin{equation*}
A.b_1=a_j.B,\text{ with } j=2,3,\ldots,v
\end{equation*}
Similarly,
\begin{equation*}
A.b_2=a_h.B,\text{ with } h=1,3,4,\ldots,v
\end{equation*}
from which it follows that 
\begin{equation*}
a_1.B=a_2.B=\cdots=a_v.b=A.B=A.(A.p)
\end{equation*}
\end{proof}
\subsubsection*{We now want to extend Theorem \ref{simplethm} to groups that are uniquely determined by exactly two of their elements (but not any two elements)}

First, we will show the following lemma.
\begin{lem}
Let there be such a subgroup $A$ and let $a_l, a_k\in A$ the two elements that determine $A$. Then if $a_i.a_j=a_k$, both $a_i$ and $a_j$ as well as $a_k$ and $a_j$ determine the subgroup, too.
\end{lem}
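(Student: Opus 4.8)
The plan is to collapse the two assertions into a single one by means of Axiom~\ref{Axiom2}, and then to settle that one assertion using the homogeneity of distributive groups from Section~\ref{chapter2}. Throughout I read ``$a_l,a_k$ determine $A$'' as: $A$ is the only subgroup of $G$ isomorphic to $A$ that contains both $a_l$ and $a_k$ --- a reading which for \emph{simple} $A$ coincides with Theorem~\ref{simplethm}, since two distinct elements of a simple group already generate it and hence lie in no other copy of it. I also use throughout that $a_i,a_j\in A$.

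First I would show that $\{a_i,a_j\}$ and $\{a_k,a_j\}$ are contained in exactly the same subgroups of $G$. By Axiom~\ref{Axiom1}, $a_k=a_i.a_j$, so any subgroup containing $a_i$ and $a_j$ contains $a_k$. Conversely, by Axiom~\ref{Axiom2} the equation $x.a_j=a_k$ has a unique solution in $G$; a subgroup containing $a_k$ and $a_j$ contains that solution, and it can only be $a_i$. Hence a subgroup contains $\{a_i,a_j\}$ if and only if it contains $\{a_k,a_j\}$, so $\{a_i,a_j\}$ determines $A$ exactly when $\{a_k,a_j\}$ does; it suffices to treat $\{a_k,a_j\}$.

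Suppose then, for contradiction, that a subgroup $A'\neq A$ with $A'\cong A$ contains $a_k$ and $a_j$. By the previous step (applied with $A'$ in place of $A$) it contains $a_i$ as well. Since $A$ is, by hypothesis, determined by the pair $(a_l,a_k)$, the element $a_l$ arises inside $A$ from $a_i,a_j,a_k$ through a fixed finite sequence of compositions and of the two inverse operations of Axiom~\ref{Axiom2}, a sequence whose shape is dictated by the abstract structure of $A$. Running the same sequence inside $A'$ is legitimate --- $a_i,a_j,a_k\in A'$ and $A'$ is closed under the composition and its two inverses --- and, step by step, it produces the same elements as inside $A$, because each step is the unique $G$-solution of an equation of Axiom~\ref{Axiom2} and these solutions are respected by the identities of Axiom~\ref{Axiom3}. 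Hence the sequence returns $a_l$, so $a_l\in A'$; but then $A'$ contains $a_l$ and $a_k$, contradicting that $(a_l,a_k)$ determines $A$. Therefore no such $A'$ exists, $\{a_k,a_j\}$ determines $A$, and by the first step so does $\{a_i,a_j\}$.

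The main obstacle is the transport argument in the last paragraph: one must be certain that the construction realising $a_l$ from $a_i,a_j,a_k$ inside $A$ reproduces the \emph{same} elements when carried out inside $A'$, so that its output is $a_l$ itself rather than merely \emph{some} element of $A'$. This is exactly where the unique-isomorphism theorem of Section~\ref{chapter2} must be invoked, to see that a pair determining $A$ rigidifies the isomorphism $A\to A'$ enough to force it to fix $a_l$; granting this, the rest is routine manipulation with Axioms~\ref{Axiom1}--\ref{Axiom3}.
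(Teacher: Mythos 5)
Your opening reduction --- that by Axioms \ref{Axiom1} and \ref{Axiom2} a subgroup contains $\{a_i,a_j\}$ if and only if it contains $\{a_k,a_j\}$ --- is correct, and it is essentially the paper's own observation that $a_k$ lies in any subgroup containing $a_i$ and $a_j$. The trouble is everything after it. You read ``$a_l,a_k$ determine $A$'' as ``$A$ is the only subgroup isomorphic to $A$ containing both'', whereas the paper reads it as ``$A$ is the \emph{smallest} subgroup containing both'' (see the first line of its proof, which introduces $\bar A$ as the smallest group determined by $a_i$ and $a_j$). Under the paper's reading the lemma is a two-line double inclusion: $\bar A\leq A$ because $a_i,a_j\in A$; and $\bar A$ contains $a_k=a_i.a_j$ together with the other generator, hence $\bar A\geq A$; so $\bar A=A$. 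No isomorphic copies, no contradiction argument, no homogeneity.

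Under your reading the argument has a genuine gap at exactly the point you flag yourself. You assert that $a_l$ ``arises inside $A$ from $a_i,a_j,a_k$ through a fixed finite sequence of compositions and of the two inverse operations''. Nothing in your hypotheses guarantees this: if ``determine'' only means uniqueness of the containing isomorphic copy, then $a_l$ need not lie in the closure of $\{a_i,a_j,a_k\}$ under composition and division, and that closure is precisely the subgroup $\bar A$ whose equality with $A$ is the thing to be proved. The appeal to ``rigidification'' via the unique-isomorphism theorem of Section~\ref{chapter2} does not repair this: that theorem produces isomorphisms of the special translated form $x\rightleftarrows x.p$, and gives no canonical isomorphism $A\to A'$ fixing $a_i,a_j,a_k$, hence no way to force $a_l\in A'$. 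Your closing paragraph concedes that the rest is routine only ``granting this'' --- but ``this'' is the whole content of the lemma. The repair is to adopt the generation reading of ``determine'' and run the paper's inclusion argument directly.
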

\begin{proof}
$a_i$ and $a_j$ determine a smallest group $\bar{A}$ for which $\bar{A}\leq A$ holds; but $a_k\in \bar{A}$ and thus $A\subset\bar{A}$, that is $\bar{A}\geq A$ and hence $A=\bar{A}$.
\end{proof}
The same proof holds for $a_k$ and $a_j$.
We will now prove the theorem.
\begin{proof}
Let the group $A$ be determined by its elements $a_l$ and $a_k$. Furthermore, as above, we have
\begin{eqnarray}
\label{eq4}&a_i.a_j=a_k& \text{ fixed, then}\\
\label{eq5}&a_k.b_j=(a_k.a_j).b_k & \text{ and}\\
\label{eq6}&a_l.b_j=(a_i.a_j).b_l=a_k.b_l=(a_k.a_l).b_k.
\end{eqnarray}
The uniquely isomorphic groups $A.b_j$ and $A.b_k$ both include the two elements $a_k.b_j=(a_k.a_j).b_k$ and $a_i.b_j=(a_k.b_l).b_j$.

The group $A.b_j$ is uniquely determined by the two elements $a_k.b_j$ and $a_l.b_j$; similarly, $(a_k.a_j).b_k$ and $(a_k.a_l).b_k$ determine $A.b_l$ uniquely. Thus, $A.b_j=A.b_k$.

Let $x_1,x_2,\ldots\in G$ be all the elements in $G$ for which $A.x_1=A.x_2=\cdots A.b_j$ holds, then those elements constitute a subgroup of $G$. Since there are only finitely many such elements, we only have to show that Axiom \ref{Axiom1} holds.
\begin{equation*}
A.(x_p.x_q)\leq (A.x_p).(A.x_q)=(A.x_p).(A.x_p)=A.x_p=A.b_j
\end{equation*}
shows this group property. Since both $b_j$ and $b_k$ are also in this group and hence $B$ is, we have 
\begin{equation*}
A.b_1=A.b_2=\cdots=A.b_v=A.B
\end{equation*}
\end{proof}
\begin{prop}
Let $A=\left\{a_1,a_2,\ldots,a_v\right\}$ be a subgroup with the property that for any two elements $a_i,a_k\in A$, the equation $a_i=a_k^2.a_i$ does not hold. Then $A.(A.p)=A.t$.
\end{prop}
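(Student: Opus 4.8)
The plan is to imitate the proof of Theorem~\ref{simplethm} and of its extension above, with the hypothesis $a_i\ne a_k^{2}.a_i$ taking over the role that ``simple'' and ``determined by exactly two elements'' played there. First I would set $B=A.p=\{b_i:=a_i.p\mid i=1,\dots,v\}$. By the homogeneity of Section~\ref{chapter2}, right composition with $p$ is a unique isomorphism, so $B$ is a subgroup of $G$ with $b_i\leftrightarrows a_i$; and Axiom~\ref{Axiom3}, in its left form $c.(x.y)=(c.x).(c.y)$, gives for each $m$
\[
a_m.B=\{a_m.(a_n.p):n\}=\{(a_m.a_n).b_m:n\}=A.b_m,
\]
since $n\mapsto a_m.a_n$ runs through all of $A$. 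Hence $A.(A.p)=\bigcup_{m=1}^{v}A.b_m$, and the proposition reduces to showing that the cosets $A.b_1,\dots,A.b_v$ all coincide.

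Next I would collect shared elements between two such cosets. Applying the left form of Axiom~\ref{Axiom3} once more yields $a_i.b_1=(a_i.a_1).b_i$ and $a_1.b_i=(a_1.a_i).b_1$, so both $(a_i.a_1).b_i$ and $(a_1.a_i).b_1$ lie in $A.b_1\cap A.b_i$, and for $a_i\ne a_1$ these two are distinct (by $a.a=a$ together with Axiom~\ref{Axiom2}, $a_i.a_1\ne a_i$ and $a_1.a_i\ne a_1$). Since $A.b_1$ and $A.b_i$ are subgroups uniquely isomorphic to $A$, their intersection is again a genuine (finite) subgroup, and transporting it through the two isomorphisms $A.b_1\cong A$ and $A.b_i\cong A$ shows that it contains a copy of the subgroup of $A$ generated by $a_1$ and $a_i$. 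For propagation I would use the device from the preceding proof: $S=\{x\in G:A.x=A.b_1\}$ is a subgroup of $G$, because for $x,y\in S$ one has $A.(x.y)\subseteq(A.x).(A.y)=(A.x).(A.x)=A.x=A.b_1$ and $|A.(x.y)|=v=|A.b_1|$ forces equality; translating back via $b\leftrightarrows a$, the indices $i$ with $A.b_i=A.b_1$ correspond to a subquasigroup of $A$ containing $a_1$, and the whole claim is that this subquasigroup is all of $A$.

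The decisive step, and the one I expect to be the main obstacle, is upgrading ``$A.b_1\cap A.b_i$ contains a copy of $\langle a_1,a_i\rangle$'' to ``$A.b_1=A.b_i$'' for enough indices $i$. This is precisely where the hypothesis enters: the assumption that $a_i=a_k.(a_k.a_i)$ never holds for distinct $a_i,a_k$ excludes exactly the small configurations in which two elements fail to determine their group -- in particular the three-element group, where $a^{2}.b=b$ -- so the two shared elements exhibited above are pushed into sufficiently general position to make $A.b_1\cap A.b_i$ exhaust $A.b_1$. Concretely I would argue by contradiction: if $A.b_i\ne A.b_1$, then $A.b_1\cap A.b_i$ is a proper subgroup, and pulling its two descriptions back through the isomorphisms above, a short computation with Axiom~\ref{Axiom3} should produce distinct $a_k,a_l\in A$ with $a_k^{2}.a_l=a_l$, against the hypothesis. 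Homogeneity -- together with the Remark after Example~3 in Section~\ref{chapter2}, which permits transporting an equation in which $x$ occurs once on each side between isomorphic groups -- is what lets me move the hypothesis from $A$ to each $A.b_m$. Once $A.b_i=A.b_1$ is secured for an index $i$ whose $b_i$, with $b_1$, generates $B$, the subgroup $S$ contains $B$, hence $A.b_1=\dots=A.b_v$, and therefore $A.(A.p)=A.b_1=A.t$ with $t=a_1.p$. The hard part is genuinely the squeezing of the contradictory identity $a_k^{2}.a_l=a_l$ out of the contrary assumption; I expect it to require careful bookkeeping of which composites $a_i.a_1$ occur and of the sizes of the subgroups involved, exactly as in the proof of the extension of Theorem~\ref{simplethm}.
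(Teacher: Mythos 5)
Your reduction to showing $A.b_1=\cdots=A.b_v$ is sound, but the proof stops short at exactly the point you yourself flag as decisive: you never actually extract the forbidden identity $a_k^{2}.a_l=a_l$ from the assumption $A.b_i\neq A.b_1$; ``a short computation \dots\ should produce'' is a conjecture, not an argument, and the surrounding strategy is unlikely to deliver it. Knowing that $A.b_1\cap A.b_i$ contains a copy of the subgroup generated by $a_1$ and $a_i$ gives nothing when that subgroup is proper in $A$ --- this proposition, unlike Theorem~\ref{simplethm} and its extension, makes no assumption that two elements determine $A$ --- and your closing step (``once $A.b_i=A.b_1$ is secured for an index $i$ whose $b_i$, with $b_1$, generates $B$'') quietly reinstates a two-generation hypothesis that is not available here. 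So the argument as written does not close.

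The paper's proof avoids intersections of cosets entirely and runs on a counting argument. One first checks, using Axiom~\ref{Axiom3}, that every ``diagonal'' element $a_k.b_k$ lies in each of the groups $A.b_j$, so the $v$ elements $a_1.b_1,\ldots,a_v.b_v$ are common to all of them; if these are pairwise distinct they exhaust each $v$-element set $A.b_j$, forcing $A.b_1=\cdots=A.b_v=A.B$ at once. The hypothesis enters only to prove distinctness, via the explicit computation your proposal is missing: if $a_r.b_r=a_l.b_l$ with $r\neq l$, choose $a_k$ with $a_r.a_k=a_l$ and compute
\begin{equation*}
a_r.b_k=(a_r.a_k).b_r=a_l.b_r=(a_l.a_r).b_l=(a_l.b_l).(a_r.b_l)=(a_r.b_r).(a_r.b_l)=a_r.(b_r.b_l),
\end{equation*}
whence $b_k=b_r.b_l$ by Axiom~\ref{Axiom2}, hence $a_k=a_r.a_l$, and combining with $a_r.a_k=a_l$ gives $a_l=a_r.(a_r.a_l)=a_r^{2}.a_l$ --- precisely the excluded relation. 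Supplying this computation (or an equivalent one) is what your proof still needs.
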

We will use equations \eqref{eq4}, \eqref{eq5} and \eqref{eq6}.
\begin{proof}
Let $k=1,2,\ldots v$, then $a_k.b_j=(a_k.a_j).b_k$ and $a_i.b_j=(a_k.a_i).b_k$ are both elements in $A.b_j$. Thus, their product $a_i.a_k).b_j=\left[a_k.(a_i.a_j)\right]=(a_k.a_k).b_k=a_k.b_k$ is also in $A.b_j$.

Hence, each of the elements
\begin{equation}
a_1.b_1,a_2.b_2,\ldots,a_v.b_v
\label{eq7}
\end{equation}
is an element of each of the groups $A.b_j$.

If all of the elements in \eqref{eq7} are distinct, then they determine uniquely the group $A.b_j$ and $A.b_1=A.b_2=\cdots=A.b_v=A.B$ holds.

Let us take a closer look at 
\begin{equation}a_r.b_r=a_l.b_l\label{eq10}
\end{equation}
 There exists an $a_k\in A$ for which 
\begin{equation}\label{eq8}
a_r.a_k=a_l
\end{equation} holds. Thus,
\begin{eqnarray*}
a_r.b_k&=(a_r.a_k).b_r&=a_l.b_r\\
&=(a_l.a_r).b_l&=(a_l.b_l).(a_r.b_l)\\
&=(a_r.b_r).(a_r.b_l)&=a_r.(b_r.b_l)
\end{eqnarray*}
from which $b_k=b_r.b_l$ follows and hence
\begin{equation}\label{eq9}
a_k=a_r.a_l
\end{equation}
From \eqref{eq8} and \eqref{eq9} we conclude that $a_l=a_r.(a_r.a_l)=a_r^2.a_l$. Similarly, $a_r=a_l^2.a_r$.

However, this is a contradiction to the assumptions, so we have that $A.(A.p)=A.t$.
\end{proof}
\begin{rem}
We will show that \eqref{eq10} follows from $a_l=a_r^2.a_l$.
\end{rem}
\begin{proof}
We set $a_r.a_l=a_k$ which is equivalent to $a_l=a_r.a_k$. We now have
\begin{equation*}
a_r.b_k=a_r.(b_r.b_l)=(a_r.b_r).(a.r.b_l)
\end{equation*}
as well as
\begin{equation*}
a_r.b_k=(a_r.a_k).b_r=a_l.b_r=(a_l.a_r).b_l=(a_l.b_l).(a_r.b_i)
\end{equation*}
Comparing these gives us the result, $a_r.b_r=a_l.b_l$.
\end{proof}
\begin{cor}
Let $A=\left\{a_1,\ldots,a_v\right\}$ be a commutative group without any subgroup of order three, then $A.(A.p)=A.t$.
\end{cor}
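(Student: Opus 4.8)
The plan is to derive the Corollary from the preceding Proposition: I will show that a commutative distributive group $A$ containing no subgroup of order three automatically satisfies the hypothesis used there, namely that there is no pair of \emph{distinct} elements $a_i,a_k\in A$ with $a_i=a_k^2.a_i$. (For $a_i=a_k$ the relation $a_i=a_k^2.a_i$ holds automatically by the idempotent law $a.a=a$ established in Section~\ref{chapter1}, and the pair $a_r,a_l$ produced in the proof of the Proposition was distinct, so only distinct pairs are at issue.) Suppose, for contradiction, that $a_i\neq a_k$ in $A$ and $a_i=a_k^2.a_i=a_k.(a_k.a_i)$. Put $c:=a_k.a_i$; then $a_i=a_k.c$, and commutativity gives $c=a_i.a_k$ and $a_i=c.a_k$.

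First I would verify that $a_k,a_i,c$ are pairwise distinct. If $c=a_k$, then $a_k.a_i=a_k=a_k.a_k$, whence $a_i=a_k$ by Axiom~\ref{Axiom2}, a contradiction; if $c=a_i$, then $a_k.a_i=a_i=a_i.a_i$, whence $a_k=a_i$ by Axiom~\ref{Axiom2}, again a contradiction. The decisive step is then to compute $d:=a_i.c$: substituting $a_i=a_k.c$ and $c=a_k.a_i$ and collapsing with the left distributive law of Axiom~\ref{Axiom3},
\begin{equation*}
a_i.c=(a_k.c).(a_k.a_i)=a_k.(c.a_i)=a_k.(a_i.c)=a_k.d,
\end{equation*}
the third equality being commutativity. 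Thus $a_k.d=d$; since also $d.d=d$ by idempotency, we have $a_k.d=d.d$, and the uniqueness of the solution of $y.d=d$ in Axiom~\ref{Axiom2} forces $d=a_k$. Hence $a_i.c=a_k$ (and $c.a_i=a_k$ by commutativity), so every product of two elements of $\{a_k,a_i,c\}$ again lies in $\{a_k,a_i,c\}$. Being finite and closed under the composition, this three-element set is a subgroup of $A$ of order exactly three, contradicting the hypothesis. Therefore no such pair $a_i,a_k$ exists, $A$ meets the hypothesis of the Proposition, and $A.(A.p)=A.t$.

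I expect the only genuine obstacle to be the displayed identity: one must notice that rewriting $a_i.c$ as $(a_k.c).(a_k.a_i)$ and contracting it by the left distributive law produces the self-referential equation $d=a_k.d$, after which idempotency together with Axiom~\ref{Axiom2} closes the argument immediately. Everything else — the distinctness of $a_k,a_i,c$ and the closure of the triple — is routine bookkeeping, and no appeal to the homogeneity machinery of Section~\ref{chapter2} is needed.
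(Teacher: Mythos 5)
Your proposal is correct and follows essentially the same route as the paper: the authors likewise assume a relation $a_j=a_i.(a_i.a_j)$, set $a_k=a_i.a_j$, and use left distributivity plus commutativity to force $a_j.a_k=a_i$, exhibiting $\{a_i,a_j,a_k\}$ as a forbidden subgroup of order three so that the preceding Proposition applies. Your write-up merely adds the (welcome) explicit checks of pairwise distinctness and of the Axiom~\ref{Axiom2} step $a_k.d=d.d\Rightarrow d=a_k$, which the paper leaves implicit.
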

This follows directly from the preceding remark.

If there were be a composition $a_j=a_i.(a_i.a_j)$, then from $a_i.a_j=a_k$ it follows that $a_j=a_i.a_k$. Furthermore, $a_j.a_k=(a_i.a_k).(a_i.a_j)=a_i.(a_k.a_j)$ from which $a_j.a_k=a_i$ follows, that is, $a_i$, $a_j$ and $a_k$ constitute a subgroup of order three, a contradiction.

\begin{thm}\label{thm1}
Let $A$ be a subgroup for which $A.(A.p)=A.t$ holds. Then 
\begin{equation*}
G=A+A.p+A.q+\cdots+A.w
\end{equation*}
\end{thm}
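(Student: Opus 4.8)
The plan is to prove that the distinct sets among $A, A.p, A.q, \dots$ — that is, among all cosets $A.x$, $x\in G$ — are pairwise disjoint; since they obviously cover the finite group $G$, this is precisely the asserted decomposition. Write $v=|A|$. I will use throughout the two cancellation facts supplied by Axiom \ref{Axiom2}: for each $g\in G$ the maps $u\mapsto g.u$ and $u\mapsto u.g$ are injective on $G$. Three immediate consequences: every coset $A.x$ has exactly $v$ elements; $|g.S|=|S|$ for every finite $S\subseteq G$; and $A$ itself is a coset, since for $a\in A$ one has $A.a\subseteq A$ (closure) and $|A.a|=v$, so $A.a=A$. For the covering, given $h\in G$ I would solve $a_1.x=h$ with $a_1\in A$ fixed, placing $h$ in $A.x$.

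The next step, and the crucial one, is to turn the hypothesis into a usable form. Fix $x$ and put $B=A.x$. The hypothesis says $A.(A.x)=A.B$ is a single coset, hence has exactly $v$ elements. But $A.B=\bigcup_{j=1}^{v}a_j.B$ is a union of the $v$ sets $a_j.B$, each contained in $A.B$ and each of size $|a_j.B|=|B|=v=|A.B|$; by finiteness this forces $a_j.B=A.B$ for every $j$, i.e. all left translates $a_j.(A.x)$ coincide with $A.(A.x)$. I want this mainly for the consequence that $C\mapsto A.C$ is injective on cosets: if $A.(A.x)=A.(A.y)$, then $a_1.(A.x)=A.(A.x)=A.(A.y)=a_1.(A.y)$, and injectivity of $u\mapsto a_1.u$ gives $A.x=A.y$.

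Disjointness is then a short counting argument. Suppose $A.x$ and $A.y$ share an element $c$. From $c\in A.x$ I get $A.c\subseteq A.(A.x)$, and since both sides have $v$ elements, $A.c=A.(A.x)$; symmetrically $A.c=A.(A.y)$. Hence $A.(A.x)=A.(A.y)$, and by the injectivity just established $A.x=A.y$. So any two cosets are equal or disjoint; together with the covering property and the finiteness of $G$, this is exactly $G=A+A.p+A.q+\dots+A.w$.

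The hard part, I expect, is spotting the ``collapse'' in the second paragraph. The naive route — mimicking ordinary group cosets by proving $y\in A.x\Rightarrow A.y=A.x$ — is \emph{false} for distributive groups (already in the commutative distributive group of order $9$, with $A$ the subgroup of order $3$, there are elements $y\in A.x$ with $A.y\neq A.x$), so one has to recognise that the exploitable content of the hypothesis is the bare cardinality statement $|A.(A.p)|=v$, and that this is what makes the $v$ translates $a_j.(A.p)$ coincide and renders $C\mapsto A.C$ left-cancellable. A minor pitfall worth flagging: $x$ need not lie in $A.x$, so the covering step must be run through the equation $a_1.x=h$ rather than through ``$h\in A.h$''.
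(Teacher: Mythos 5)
Your proof is correct and follows essentially the same route as the paper's: take $c$ in the intersection of two cosets, deduce $A.c=A.(A.p)=A.(A.q)$, and cancel to get $A.p=A.q$. You merely make explicit the two steps the paper leaves tacit — the cardinality collapse $a_j.(A.p)=A.(A.p)$ that justifies both "$A.(A.p)=A.c$" and the passage to "$a_1.(A.p)=a_1.(A.q)$" — which is a welcome clarification but not a different argument.
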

We will show this by showing that two subgroups $A.p$ and $A.q$ are either identical or disjoint.
\begin{proof}
Let $c$ be an element in the intersection of those two subgroups. Then,

$A.(A.p)=A.c$ and $A.(A.q)=A.c$, respectively and thus $A.(A.p)=A.(A.q)$ and hence $a_1.(A.p)=a_1.(A.q)$ and thus $A.p=A.q$.
\end{proof}
\begin{rem}
We will now show for later use if we have a simple subgroup of order $v$, $A=\left\{a_1,a_2,\ldots,a_v\right\}$, $(A.p).A$ is also a $v$-parameter group $t.A$.
\end{rem}
\begin{proof}
We have
\begin{equation}\label{eq14}
(a_i.p).a_k=(a_i.a_k).(p.a_k)=a_l.(p.a_k)=(a_l.p).(a_l.a_k)=(a_l.p).a_k
\end{equation}
where
\begin{eqnarray}\label{eq11}
a_i.a_k&=a_l&\text{ and}\\
a_l.a_k&=a_h\label{eq11a}
\end{eqnarray}
Furthermore,
\begin{equation}
(a_l.p).a_h=(a_l.a_h).(p.a_h)=a_s.(p.a_h)=(a_s.p).(a_s.a_h)=(a_s.p).a_t
\end{equation}
where
\begin{eqnarray}
a_l.a_h=a_s&\text{ and }&a_s.a_h=a_t
\end{eqnarray}
For $a_s$ and $a_t$ we calculate
\begin{equation}\label{eq13}
\begin{cases}
a_s=a_l.a_h=(a_i.a_k).(a_l.a_k)=(a_i.a_l).a_k\\
a_t=a_s.a_r=\left[(a_i.a_l).a_k\right].(a_l.a_k)=\left[(a_i.a_l).a_l\right].a_k
\end{cases}
\end{equation}
 and thus the two simple groups $(a_i.p).A$ and $A.p.a_h$ have the following elements in common
\begin{eqnarray}\label{eq12}
(a_i.p).a_h&\text{ and }&(a_i.p).a_k=(a_l.p).a_h
\end{eqnarray}
Let $A_i\neq a_k$, then because of \eqref{eq11} we have that $a_k\neq a_l$. Then, by \eqref{eq11a}, $a_h\neq a_k$, which means the two elements in \eqref{eq12} are different.
This leads to
\begin{equation*}
(a_i.p).A=(A.p).a_h
\end{equation*}
with $a_i.a_k=a_l$, $a_l.a_k=a_h$, that is $(a_i.a_k).a_k=a_h$.

Now there are two possible cases.
\begin{enumerate}[{Case} 1:]
\item\label{case1} For two values $a_k$ ($a_k$ and $a_{\bar{k}}$) let $a_h\neq a_{\bar{h}}$. Then we have 
\begin{equation*}
(A.p).a_h=(A.p).a_{\bar{h}}=(A.p).A
\end{equation*}
\item Let $a_h$ be independent from $a_k$ in $a_h=(a_i.a_k).a_k$ for all $k\neq i$. Then it must be that $a_h=a_i$, since if otherwise $a_h=a_t, t\neq i$, we would have $a_t=(a_i.a_t).a_t$ that is, $a_i.a_t=a_t$ from which it follows that $t=i$ which is a contradiction. Thus, the following must hold:
\begin{equation}\label{eq15}
a_i=(a_i.a_k).a_k
\end{equation}
Since $a_i$ was arbitrary, equation \eqref{eq15} must hold for any two elements in $A$ (otherwise it would be Case \ref{case1}). But then it follows from equation \eqref{eq13} that $a_t=a_i.a_k$ and the two simple groups $(a_i.p).A$ and $(A.p).a_i$ both include the two distinct elements $(a_i.p).a_t$ and $(a_i.p).a_k=(a_s.p).a_t$. (Since $a_t\neq a_k$ if $a_i\neq a_k$.) Thus, it follows that
\begin{equation}\label{eq16}
(a_i.p).A=(A.p).a_t
\end{equation}
If $k$ is running through $1,2,\ldots,i-1,i+1,\ldots,v$, then $t$ runs through the same numbers (possibly in changed order), that is
\begin{equation*}
(a_i.p).A=(A.p).a_1=(A.p).a_2=\cdots=(A.p).a_v
\end{equation*}
since $a_i$ was arbitrary.
\end{enumerate}
\end{proof}
\begin{prop}
Let $A=\left\{a_1,\ldots,a_v\right\}$ be a subgroup with the property that for any two elements the following equations do not hold:
\begin{eqnarray*}
(a_i.a_p).a_p&=&a_i\\
(a_p.a_i).a_p&=&a_i
\end{eqnarray*}
Then $(A.p).A=t.A$.
\end{prop}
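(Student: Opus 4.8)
The plan is to mimic, on the right, the argument of the preceding Proposition (and of the Remark following Theorem~\ref{thm1}, which settles the simple case): put $b_i=a_i.p$ and $B=A.p=\{b_1,\ldots,b_v\}$, so that $(A.p).A=b_1.A+b_2.A+\cdots+b_v.A$, and prove that the $v$ subgroups $b_i.A$ all coincide; their common value is then the required $r$-class group $t.A$.

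The computational engine is the identity obtained from two applications of Axiom~\ref{Axiom3}: for all $i,k$,
\begin{equation*}
(a_i.p).a_k=(a_i.a_k).(p.a_k)=a_l.(p.a_k)=(a_l.p).(a_l.a_k)=(a_l.p).a_h,
\end{equation*}
where $a_l=a_i.a_k$ and $a_h=a_l.a_k=(a_i.a_k).a_k$. This displays each element $(a_i.p).a_k$ as lying in $b_i.A$ and simultaneously in $(A.p).a_h$; iterating it, and using that $a\mapsto a.a_j$ and $a\mapsto a_i.a$ are bijections of $A$ by Axiom~\ref{Axiom2}, one pins down a family of $v$ elements --- the right-hand analogue of the ``diagonal'' elements $a_1.b_1,\ldots,a_v.b_v$ appearing in the proof of the preceding Proposition --- each of which belongs to every $b_i.A$.

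Now the dichotomy. If those $v$ elements are pairwise distinct, then each $b_i.A$, being a subgroup of order $v$ that contains all of them, coincides with this fixed $v$-element set; hence the $b_i.A$ are all equal and $(A.p).A=t.A$. If instead two of these elements coincide, I would run the same chain of Axiom~\ref{Axiom3}-manipulations as in the proof of the preceding Proposition (the passage around \eqref{eq8}--\eqref{eq9}): an equality of two such elements produces an $a_k\in A$, forces $b_k$ to be a product of two of the $b$'s, and this, carried back through $p$ by Axiom~\ref{Axiom3}, yields identities among the elements of $A$ of the forbidden shapes $(a_i.a_p).a_p=a_i$ and $(a_p.a_i).a_p=a_i$. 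Both are excluded by hypothesis, so the degenerate case cannot arise and the first alternative always holds.

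The hard part is the second step: isolating exactly the right $v$ elements and checking cleanly that each one lies in every $b_i.A$. The instances of Axiom~\ref{Axiom3} have to be applied on the correct side, and one must verify that the resulting $v$ elements genuinely do not depend on the index of the subgroup --- this is exactly where the argument for non-simple $A$ departs from the simple case, in which two common elements already force two copies to coincide. A secondary point is to check that the identities forced in the degenerate case are literally of one of the two forbidden shapes; the hypothesis lists both $(a_i.a_p).a_p=a_i$ and $(a_p.a_i).a_p=a_i$ because the chain of manipulations uses both relations of Axiom~\ref{Axiom3} and so produces an instance of each.
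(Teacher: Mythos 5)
Your overall strategy is the paper's: decompose $(A.p).A$ into the $v$ order-$v$ subgroups $b_i.A=(a_i.p).A$ with $b_i=a_i.p$, exhibit the $v$ ``diagonal'' elements $(a_1.p).a_1,\ldots,(a_v.p).a_v$, show that every $b_i.A$ contains all of them and that they are pairwise distinct, and conclude that all the $b_i.A$ coincide. The identity $(a_i.p).a_k=(a_l.p).a_h$ with $a_l=a_i.a_k$ and $a_h=(a_i.a_k).a_k$ is indeed the engine. But the step you describe as following from ``iterating'' this identity together with the Axiom \ref{Axiom2} bijections --- that each of the $v$ diagonal elements lies in every $b_i.A$ --- does not follow from those ingredients, and this is exactly where the real work sits. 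The identity only puts $(a_l.p).a_h$ into $b_i.A$, with $l\neq h$ in general; iterating rewrites the \emph{same} element as $(a_s.p).a_t$, and so on, and the chain need never reach a diagonal pair. The paper instead observes that the set $\left\{x:(x.p).a_h\in(a_i.p).A\right\}$ is closed under composition, because $(a_i.p).A$ is a group and $\left[(x.p).a_h\right].\left[(y.p).a_h\right]=\left[(x.y).p\right].a_h$ by Axiom \ref{Axiom3}; this set contains $a_i$ and $a_l$, hence the subgroup they generate, hence $a_k$, $a_h$, $a_s$, $a_t$, and only then is $(a_h.p).a_h\in(a_i.p).A$.

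Even granting that closure argument, you only obtain the diagonal elements indexed by the image of the map $a_k\mapsto a_h=(a_i.a_k).a_k$, and that map is \emph{not} one of the Axiom \ref{Axiom2} bijections ($a_k$ occurs twice); its injectivity, hence surjectivity, is precisely what the first forbidden identity buys. The paper shows that $(a_i.a_k).a_k=(a_i.a_p).a_p$ with $k\neq p$ forces $(a_k.a_p).a_p=a_k$. So the hypothesis is already needed to establish that every $b_i.A$ contains all $v$ diagonal elements, i.e.\ to set up the non-degenerate branch of your dichotomy, and not merely to exclude the branch where two diagonal elements coincide. The second forbidden identity is then what rules out $(a_l.p).a_l=(a_k.p).a_k$ for $l\neq k$, which the paper reduces to $(a_k.a_l).a_k=a_l$. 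As written, your dichotomy leaves the non-degenerate branch unproved: pairwise distinctness of the $v$ diagonal elements by itself does not force a $v$-element subgroup $b_i.A$ to contain all of them.
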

\begin{proof}
We are following equations \eqref{eq14} to \eqref{eq13}.

The group $(a_i.p).A$ includes the elements $(a_i.p).a_h$ and $(a_i.p).a_k=(a_l.p).a_h$ as well as all of the elements $(x_r.p).a_h$ where $x_r$ is any element of the smallest group with elements $a_i,a_l$, $\left\{a_i,a_l\right\}$. This group also includes the elements $a_k,a_h,a_s,a_t$ and so $(a_h.p).a_h\in (a_i.p).A$ with $a_h=(a_i.a_k).a_k$.

We claim that, together with $a_k$, $a_h$ runs through all elements of $A$. From $(a_i.a_k).a_k=(a_i.a_p).a_p=a_r$ we conclude as follows:

Set $a_i.a_k=a_l$ and $a_i.a_p=a_q$, then $a_l.a_k=a_q.a_p$; furthermore $a_l.(a_p.a_k)=(a_i.a_k).(a_p.a_k)=(a_q.a_k)$. In addition,
\begin{eqnarray*}
a_i.(a_p.a_k)&=(a_i.a_p).(a_l.a_k)&=(a_l.a_p).(a_q.a_p)\\
&=(a_l.a_q).a_p&=\left[a_i.(a_k.a_p)\right].a_p\\
&=a_q.\left[(a_k.a_p).a_p\right]
\end{eqnarray*}
Comparing gives $a_k=(a_k.a_p).a_p$ which is a contradiction. Hence, $(a_i.p).A$ includes all elements
\begin{equation}\label{eq17}
(a_1.p).a_1, (a_2.p).a_2,\ldots,(a_v.p).a_v
\end{equation}

Next, set $b_t=a_t.p$, then $b_l.a_l=b_k.a_k$.

If $a_r.a_k=a_l$, then we form
\begin{eqnarray*}
b_r.a_k&=(a_r.p).a_k&=a_l.(p.a_k)\\
&=b_l.(a_l.a_k)&=(b_l.a_l).(b_l.a_k)\\
&=(b_k.a_k).(b_l.a_l)&=(b_k.b_l).a_k
\end{eqnarray*}
From this, it would follow that $b_r=b_k.b_l$, that is $a_r=a_k.a_l$ or, equivalently, $(a_k.a_l).a_k=a_l$, again a contradiction. Thus, all groups $(a_i.p).A$ include the different elements in \eqref{eq17} and the following holds:
\begin{equation*}
(a_1.p).A=(a_2.p).A=\cdots=(a_v.p).A=(A.p).A
\end{equation*}
\end{proof}
\subsubsection*{Implications from Theorem \ref{thm1}}
\begin{enumerate}[1.]
\item A group whose order is prime, is simple.
\item The order of a simple subgroup, the order of a subgroup that is uniquely determined by two elements and the order of a subgroup, where the equation $a_i^2.a_k=a_k$ does not hold for any pair of elements, divides the order of the group.
\item For subgroups of a commutative group, whose order is not divisible by 3, the index theorem holds (the order of the subgroup is not a divider of the order of the group).
\end{enumerate}
To finish this section, we note the following theorem.
\begin{thm}
If $(A.p).A=t.a$, then also $A.(p.A)=A.t$, since it is always true that $(A.p).A=A.(p.A)$.
\end{thm}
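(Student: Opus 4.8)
The plan is to prove the set identity $(A.p).A = A.(p.A)$ directly from the left-hand relation of Axiom \ref{Axiom3}, after which the stated consequence is immediate. The one thing to be careful about is that the distributive law is \emph{not} associativity: in general $(a_i.p).a_k \neq a_i.(p.a_k)$ termwise, so the equality of the two \emph{sets} must be obtained by a reindexing argument rather than by an element-by-element comparison.

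First I would fix $a_i \in A$ and examine the ``slice'' $a_i.(p.A) = \left\{\, a_i.(p.a_k) : a_k \in A \,\right\}$. Applying the identity $c.(a.b) = (c.a).(c.b)$ of Axiom \ref{Axiom3} with $c = a_i$, $a = p$, $b = a_k$ gives $a_i.(p.a_k) = (a_i.p).(a_i.a_k)$. Since $A$ is a subgroup, it is itself a distributive group, so Axiom \ref{Axiom2} holds inside $A$ and the map $a_k \mapsto a_i.a_k$ is a bijection of $A$ onto itself; hence $\left\{\, a_i.a_k : a_k \in A \,\right\} = A$. Substituting and relabelling $a_m = a_i.a_k$, this yields $a_i.(p.A) = \left\{\, (a_i.p).a_m : a_m \in A \,\right\} = (a_i.p).A$.

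Next I would take the union over all $a_i \in A$: the left side becomes $A.(p.A)$ and the right side becomes $\bigcup_{a_i \in A}(a_i.p).A = (A.p).A$. This establishes $(A.p).A = A.(p.A)$ with no hypothesis on $A$ whatsoever. Finally, under the assumption $(A.p).A = t.A$, substituting into this identity gives $A.(p.A) = t.A$, which is the assertion.

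The only real obstacle is bookkeeping rather than mathematics: one must be sure the reindexing $a_m = a_i.a_k$ is legitimate, i.e. that left multiplication by a fixed element of $A$ permutes $A$. As noted, this rests only on the fact — used repeatedly earlier in the paper — that a subgroup of a distributive group satisfies Axiom \ref{Axiom2} internally, so there is no genuine difficulty here.
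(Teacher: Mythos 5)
Your proof is correct and essentially matches the paper's: both rest on rewriting the elements of the two complexes termwise via Axiom \ref{Axiom3}. The only difference is one of bookkeeping --- the paper proves the two inclusions separately, using the right-distributive law to get $(a_i.p).a_k=(a_i.a_k).(p.a_k)\in A.(p.A)$ and the left-distributive law to get $a_i.(p.a_k)=(a_i.p).(a_i.a_k)\in(A.p).A$, whereas you use only the left-distributive law together with the observation that $a_k\mapsto a_i.a_k$ permutes $A$; both routes are equally valid.
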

\begin{proof}
Let $(a_i.p).a_k\in(A.p).A$, then since
\begin{equation*}
(a_i.p).a_k=(a_i.a_k).(p-a_k)
\end{equation*}
it is also an element of $A.(p.A)$ and hence
\begin{equation*}
(A.p).A\leq A.(p.A)
\end{equation*}
Let $a_i.(p.a_k)=(a_i.p).(a_i.a_k)\in A.(p.A)$, then it is also an element of $(A.p).A$ and hence
\begin{equation*}
(A.p).A\geq A.(p.A)
\end{equation*}
and thus 
\begin{equation*}
(A.p).A=A.(p.A)
\end{equation*}
\end{proof}
\begin{rem}
Let $G=\left\{a_1,a_2,\ldots,a_v\right\}$ be a distributive group where the equation $(a_i.a_k).a_k=a_i$ does not hold for any $a_i,a_k\in G$. Then we define a new composition,
\begin{equation*}
a_i\circ a_k=(a_i.a_k).a_k
\end{equation*}
For this new system, Axioms \ref{Axiom1} and \ref{Axiom2} hold as well as
\begin{eqnarray*}
(a_i\circ a_k)\circ(a_j\circ a_k)&=\left[(a_i.a_k).a_k\right]\circ\left[(a_j.a_k).a_k\right]&=\left\{\left[(a_i.a_k).a_k\right].\left[(a_j.a_k).a_k\right]\right\}.\left[(a_j.a_k).a_k\right]\\
&=\left\{\left[(a_i.a_j).a_k\right].a_k\right\}.\left[(a_j.a_k).a_k\right]&=\left\{\left[(a_i.a_j).a_j\right].a_k\right\}.a_k\\
&=(a_i\circ a_j)\circ a_k&
\end{eqnarray*}
On the other hand, the left-sided distributivity does generally not hold.
\end{rem}
\section{The Structure of Distributive Groups}
In this section, we assume that for each subgroup $A$ of $G$ $A.(A.p)$ and $(A.p).A$ are subgroups of the same order as $A$. Equivalently,
\begin{equation*}
\begin{cases}
A.(A.p)=A.t\\
(A.p).A=A.(p.A)=A.\tau
\end{cases}
\end{equation*}
As shown in section\ref{chapter4}, a sufficient condition for this to hold is that for any $x,y\in G$, neither of the following two equations hold:
\begin{equation*}
(x.y).y=x,\ (y.x).y=x,\ y.(y.x)=x
\end{equation*}
We will call those groups \textit{distinguished groups}\footnote{In the original \textit{ausgezeichnete Gruppe}}.
\begin{notation}
A subgroup $A.p$ is denoted by $\left[A\right]$
\end{notation}
\begin{lem}\label{lem1}
If $A$, $B$ and $C$ are subgroups, all of order $v$, then if $A.B=C$,
\begin{equation*}
B=\left[A\right],\ C=\left[A\right],\ A=\left[B\right],\ C=\left[B\right],\ A=\left[C\right],\ B=\left[C\right]
\end{equation*}
\end{lem}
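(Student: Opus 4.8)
The plan is to lean on two facts. First, for every $g\in G$ the left translation $L_g\colon x\mapsto g.x$ and the right translation $R_g\colon x\mapsto x.g$ are automorphisms of $G$ — the homomorphism property is precisely the left, resp. right, identity of Axiom~\ref{Axiom3}, and bijectivity is Axiom~\ref{Axiom2} — so that the image of a subgroup of order $v$ under any $L_g$ or $R_g$ is again a subgroup of order $v$. Second, in the distinguished group $G$ the sidegroups of any fixed subgroup $A$ exhaust $G$ and may themselves be regarded as the elements of a distributive group (Theorem~\ref{thm1} and Proposition~\ref{prop1}). Throughout write $\Gamma_A=\{A.p:p\in G\}$ for the system of sidegroups of $A$ (it contains $A$ itself, since $A=A.a$ for $a\in A$), and similarly $\Gamma_B$, $\Gamma_C$.

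The identity $C=[A]$ comes for free. Fix $b\in B$; then $A.b=R_b(A)$ is a subgroup of order $v$ contained in $A.B=C$, so, $|C|$ being $v$, it equals $C$. As $b$ was an arbitrary element of $B$ this shows $C=A.b=[A]$, and also that $A.b_1=\dots=A.b_v=C$. The same reasoning with $L_a$ in place of $R_b$ gives $a.B=C$ for every $a\in A$, since $a.B=L_a(B)$ is a subgroup of order $v$ inside $A.B=C$.

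For $B=[A]$ I would show that the left translations by elements of $A$ permute $\Gamma_A$. Indeed, for $a\in A$ one has $L_a(A)=a.A=A$ (because $A$ is a subgroup containing $a$ and $L_a$ is injective), and, applying the left identity of Axiom~\ref{Axiom3} to each $a.(x.p)=(a.x).(a.p)$, one gets $L_a(A.p)=a.(A.p)=(a.A).(a.p)=A.(a.p)\in\Gamma_A$. Thus $L_a$ maps the finite set $\Gamma_A$ injectively into itself, hence onto itself, and $L_a^{-1}$ also permutes $\Gamma_A$. Since $a_1.B=C$ with $C\in\Gamma_A$, we conclude $B=L_{a_1}^{-1}(C)\in\Gamma_A$, i.e. $B=[A]$. (Alternatively: apply the projection $G\to\overline G_A$ onto the distributive group of sidegroups to $A.B=C$; both $A$ and $C$ lie in single blocks, so one obtains $A\cdot X=C$ with $X$ the set of blocks met by $B$, and by Axiom~\ref{Axiom2} inside that group $X$ is a single block, forcing $B$ into one sidegroup of $A$.)

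What remains — $A=[B]$, $C=[B]$, $A=[C]$, $B=[C]$ — I would reduce to the assertion that the sidegroup partition of $G$ is the same whichever of its blocks one takes as base, i.e. $B\in\Gamma_A\Rightarrow\Gamma_B=\Gamma_A$ (and likewise with $C$). Granting this, $A$, $B$, $C$ are simply three blocks of one common partition of $G$ and all six relations hold at once. To prove $\Gamma_B=\Gamma_A$ one checks that $\Gamma_A$ is invariant under all translations of $G$: it is invariant under translations by elements of $A$ — e.g. $(A.p).a=(A.a).(p.a)=A.(p.a)$ by Axiom~\ref{Axiom3}, and dually for $L_a$ — and the distinguished hypotheses $A.(A.p)=A.t$ and $(A.p).A=A.(p.A)=A.\tau$ are exactly what pushes this to arbitrary translations; then for a block $B=A.p$ every $B.q=(A.p).q=(A.q).(p.q)$ is again an $A$-block, so $\Gamma_B\subseteq\Gamma_A$, hence $\Gamma_B=\Gamma_A$ by counting. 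This last step — promoting ``$B$ is one of the sidegroups of $A$'' to ``$A$ and $B$ have identical sidegroup systems'', with the attendant left/right bookkeeping — is the main obstacle; everything before it is automorphisms plus order-counting on the partition furnished by Theorem~\ref{thm1}.
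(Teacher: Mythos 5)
Your first half is sound: $C=A.b_1=[A]$ by counting, and $B=[A]$ because $L_{a_1}$ permutes the sidegroup partition $\Gamma_A$ (using $a.(A.p)=(a.A).(a.p)=A.(a.p)$) and carries $B$ onto $C\in\Gamma_A$. That is a clean variant of what the paper does; the paper instead writes $C=A.b_1=A.(a_1.r)=A.(A.r)=A.t$ and reads off that $B$ lies in the single block $A.r$. The trouble is the remaining four identities. You reduce them to the claim that $B\in\Gamma_A$ forces $\Gamma_B=\Gamma_A$, i.e.\ that every $(A.p).q$ is again a sidegroup of $A$. Your justification --- that the distinguished hypotheses ``push'' invariance of $\Gamma_A$ from translations by elements of $A$ to arbitrary translations, so that $B.q=(A.p).q=(A.q).(p.q)$ ``is again an $A$-block'' --- is circular: $(A.q).(p.q)$ is visibly a sidegroup of $A.q$, and concluding that it is a sidegroup of $A$ presupposes $\Gamma_{A.q}\subseteq\Gamma_A$, an instance of the very claim being proved. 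The distinguished hypotheses only control $(A.p).a$ and $a.(A.p)$ for $a\in A$ and the complexes $A.(A.p)$, $(A.p).A$; they say nothing about $(A.p).q$ for general $q$.

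Worse, the claim $\Gamma_B=\Gamma_A$ is essentially the paper's unresolved conjecture. If it held, then already for simple $A$ the set $(A.p).(a_l.q)$, being a block of $\Gamma_{A.p}$, would be a block of $\Gamma_A$; but the whole of Addendum 1 is built on the possibility that $H_{pq}=(A.p).(A.q)$ has $v^2$ elements, in which case the Corollary comparing the decompositions \eqref{eq23} and \eqref{eq30} shows that $(A.p).(a_l.q)$ meets each block $A.\tau_k$ of $\Gamma_A$ in exactly one element and hence is not a block of $\Gamma_A$ at all. So your reduction replaces the lemma by something strictly stronger that the authors explicitly could not settle. The lemma needs much less: run your first argument from the right, as the paper does. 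Choose $q$ with $b_1.q=a_1$; then $C=A.B=a_1.B=(b_1.q).B\subseteq(B.q).B=B.(q.B)=B.\tau$, and counting gives $C=[B]$; the partition $\Gamma_B$ furnished by Theorem \ref{thm1} applied to $B$, together with the cancellation $(B.q).B=(B.q').B\Rightarrow B.q=B.q'$, puts all of $A$ into the single block $B.q$, so $A=[B]$. Finally $A.C=A.\left[A\right]=A.\sigma$ and $B.C=B.\left[B\right]=B.\tau$ are again $v$-class products by the standing hypothesis, and the same two-sided argument applied to them yields $A=[C]$ and $B=[C]$.
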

\begin{proof}
Since $A.B=C$, $C=\left[A\right]$. Let $A=(a_t)$, $B=(b_t)$, $C=(c_t)$, with $t=1,2,\ldots,v$ be all the elements of the three subgroups.

There exists an element $r\in G$ such that $a_1.r=b_1$. Then we have
\begin{equation*}
C=A.B=A.b_1=A.(a_1.r)=A.(A.r)
\end{equation*}
that is, $B=\left[A\right]$. Furthermore, $a_1=b_1.q$ for some $q\in G$. Then
\begin{equation*}
C=A.B=a_1.B=(b_1.q).B=(B.q).B=B.(q.B)
\end{equation*}
and hence $C=\left[B\right]$ and $A=\left[B\right]$.

Finally, $A.C=A.\left[C\right]=A.\sigma$, and thus, similarly as before, $A=\left[C\right]$ and in the same way, from $B.C=B.\left[B\right]=B.\tau$ it follows that $B=\left[C\right]$.
\end{proof}
\begin{lem}\label{lem2}
Let $A$, $B$ and $C$ be $v$-parameter subgroups of, then since $A.B$ is a $v$-parameter subgroup, $A=\left[C\right]$ and $B=\left[C\right]$, it follows that $A.B=\left[C\right]$.
\end{lem}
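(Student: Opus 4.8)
The plan is to exhibit, explicitly, the element that turns $A.B$ into a sidegroup of $C$: if $A=C.p$ and $B=C.q$ I expect $A.B=C.(p.q)$, and the argument should need only the left-distributive relation of Axiom \ref{Axiom3} together with a cardinality count.

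In detail, from $A=[C]$ and $B=[C]$ I would fix $p,q\in G$ with $A=C.p$ and $B=C.q$ and write $C=\{c_1,\dots,c_v\}$, so that $A=\{c_i.p\}$ and $B=\{c_i.q\}$. The step that does the work is to apply left-distributivity with the two running indices equal:
\[
(c_i.p).(c_i.q)=c_i.(p.q)\qquad(i=1,\dots,v).
\]
Reading this from right to left, every element of the sidegroup $C.(p.q)=[C]$ occurs among the products $(c_i.p).(c_j.q)$, i.e. $C.(p.q)\subseteq A.B$. By Axiom \ref{Axiom2} the map $c\mapsto c.(p.q)$ is a bijection of $C$ onto $C.(p.q)$, so $C.(p.q)$ has exactly $v$ elements; and $A.B$ is, by hypothesis, a $v$-class subgroup, so it too has exactly $v$ elements. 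A subset of size $v$ of a set of size $v$ is the whole set, hence $A.B=C.(p.q)=[C]$.

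I do not anticipate a genuine obstacle. The only inputs are Axioms \ref{Axiom2} and \ref{Axiom3} and the assumption — presumably the reason it is singled out in the statement — that $A.B$ is already known to be a subgroup of order $v$; this is exactly what upgrades the inclusion $C.(p.q)\subseteq A.B$ to an equality. (If one did not wish to assume it, one could first bound $|A.B|\le v$, e.g. by Lemma \ref{lem1}, but that detour is unnecessary here.) The one place calling for a little care is the bookkeeping: one must note that the elements obtained by equating the two running indices form precisely a sidegroup of $C$, namely $C.(p.q)$, so that the conclusion reads literally ``$A.B=[C]$''.
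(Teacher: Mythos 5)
Your argument is correct and is essentially identical to the paper's own proof: the paper also writes $A=C.\sigma$, $B=C.\tau$, obtains $(C.\sigma).(C.\tau)\geq C.(\sigma.\tau)$ from the diagonal identity $(c.\sigma).(c.\tau)=c.(\sigma.\tau)$ of Axiom \ref{Axiom3}, and then uses the hypothesis that $A.B$ is $v$-class to upgrade the inclusion to equality. Your write-up merely makes the bookkeeping of that inclusion and the cardinality count more explicit.
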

\begin{proof}
In fact, since $A=C.\theta$ and $B=C.\tau$, 
\begin{equation*}
A.B=(C.\sigma).(C.\tau)\geq C.(\sigma.\tau)
\end{equation*}
But since $A.B$ has parameter $v$, it follows that $A.B=C.(\sigma.\tau)=\left[C\right]$.
\end{proof}
\begin{lem}\label{lem3}
Let $A_1,A_2,\ldots,A_\sigma$ be $v$-parameter subgroups with the property that pairwise products $A_i.A_k,\ i,k=(1,\ldots,\sigma)$ are also $v$-parameter groups. Looking at the span of all those subgroups with pairwise products, one gets the system $A_1,A_2,\ldots,A_{\sigma+\tau}$ with the property that the pairwise products $A_i.A_k,\ i,k=1,2,\ldots,\sigma+\tau$ is also a $v$-parameter group.
\end{lem}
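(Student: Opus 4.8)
The plan is to realise the span by repeatedly adjoining one pairwise product at a time, carrying along the induction hypothesis that the set of subgroups produced so far consists of $v$-class subgroups which are \emph{pairwise in the relation $[\,\cdot\,]$}; by Lemma~\ref{lem1} together with the standing assumption that $G$ is distinguished, this is the same as saying that all their pairwise products are $v$-class subgroups, which is exactly the assertion to be proved.

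First I would record the equivalence that lets me pass freely between the two formulations. For $v$-class subgroups $A,B$ the four conditions ``$B=[A]$'', ``$A=[B]$'', ``$A.B$ is a $v$-class subgroup'' and ``$B.A$ is a $v$-class subgroup'' are all equivalent: if $B=A.p$ then $A.B=A.(A.p)=A.t$ and $B.A=(A.p).A=A.\tau$ are $v$-class because $G$ is distinguished, while if $A.B$ is a $v$-class subgroup then Lemma~\ref{lem1} (applied with $C=A.B$) gives both $B=[A]$ and $A=[B]$, and symmetrically for $B.A$. Call two $v$-class subgroups satisfying these conditions \emph{paired}; the relation so defined is symmetric, and every $v$-class subgroup is paired with itself since $x.x=x$ for every $x$ and $A$ is closed under the product, so that $A.A=A$.

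Now start from $T=\{A_1,\dots,A_\sigma\}$, which by hypothesis has every product $A_i.A_k$ a $v$-class subgroup, hence (by the equivalence above) consists of pairwise paired subgroups. If $T$ is not yet closed under the product, choose $X,Y\in T$ with $D=X.Y\notin T$; since $X$ and $Y$ are paired, $D$ is a $v$-class subgroup. The crucial point is that $D$ is then paired with every $Z\in T$: because $X,Y,Z$ lie in $T$ and are pairwise paired we have $X=[Z]$, $Y=[Z]$, and $X.Y$ is a $v$-class subgroup, so Lemma~\ref{lem2} yields $X.Y=[Z]$, i.e.\ $D=[Z]$. Thus $T\cup\{D\}$ is again a set of pairwise paired $v$-class subgroups, and the step may be repeated. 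Since $G$ is finite it has only finitely many subgroups of order $v$, so after finitely many such adjunctions $T$ becomes closed under the product; this closed set contains $A_1,\dots,A_\sigma$ and each of its members is a product of previously present members, so it is exactly the span $A_1,\dots,A_{\sigma+\tau}$, and by the invariant its members are pairwise paired, which by the equivalence means that every product $A_i.A_k$ with $1\le i,k\le\sigma+\tau$ is a $v$-class subgroup.

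The only point requiring care is keeping the induction hypothesis strong enough — \emph{pairwise} paired, not merely ``paired with $A_1$'' — so that when a new product $D=X.Y$ is adjoined, Lemma~\ref{lem2} can be invoked with its third subgroup $C$ taken to be an arbitrary previously constructed $Z$; once this is arranged there is no genuine obstacle, the proof being a finite iteration of Lemma~\ref{lem2} controlled by the finiteness of the ambient group.
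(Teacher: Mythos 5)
Your proof is correct and rests on exactly the same mechanism as the paper's: Lemma \ref{lem1} converts ``$A_i.A_k$ is $v$-class'' into the bracket relations $A_i=\left[A_k\right]$, and Lemma \ref{lem2} then shows that a newly formed product $A_r.A_s$ is $\left[Z\right]$ for every subgroup $Z$ already present, which settles old--new, new--old and new--new products alike. The only difference is organizational --- you adjoin one product at a time with a pairwise-paired invariant, while the paper adjoins the whole batch $A_{\sigma+1},\ldots,A_{\sigma+\tau}$ at once and checks the three types of products separately --- and your explicit justification that the ``paired'' relation is symmetric and reflexive (via $A.A=A$) is a welcome piece of care the original leaves implicit.
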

\begin{proof}
We only have to show that $A_i.A_{\sigma+k}$, $A_{\sigma+k}.A_i$ and $A_{\sigma+h}.A_{\sigma+k}$, $i=1,2,\ldots,\sigma$, $h,k=1,2,\ldots,\tau$ are $v$-parameter groups.

Since $A_i.A_p$ are a $v$-parameter group, it follows by Lemma \ref{lem1} that $A_i=\left[A_p\right],\ i,p=1,\ldots,\sigma$ and since $A_i.A_k$ is a $v$-parameter group, it follows from Lemma \ref{lem2} that
\begin{equation*}
A_i.A_k=\left[A_p\right],\ p,i,k=1,2,\ldots,\sigma
\end{equation*}
Thus, $A_{\sigma+k}=\left[A_i\right]$, since $A_{\sigma+k}=A_r.A_s$ by the assumption, and hence $A_i.A_{\sigma+k}$ (and also $A_{\sigma+k}.A_i$ is a $v$-parameter group.

From this we deduce that $A_i=\left[A_{\sigma+k}\right],\ i=1,\ldots,\sigma,\ k=1,\ldots,\tau$. Now let $A_{\sigma+h}=A_p.A_q$, then $A_p=\left[A_{\sigma+k}\right]$ and $A_q=\left[A_{\sigma+k}\right]$ and since $A_p.A_q$ is a $v$-parameter group, by Lemma \ref{lem2} $A_p.A_q=A_{\sigma+h}=\left[A_{\sigma+k}\right]$ and thus indeed $A_{\sigma+h}.A_{\sigma+h},\ h,k=1,2,\ldots,\tau$ are groups of parameter $v$.
\end{proof}
Now we can proof the following theorem.
\begin{thm}\label{thm2}
Let $A_1,A_2,\ldots,A_\sigma$ be groups of parameter $v$ with the property that any product $A_i.A_k,\ i=1,2,\ldots,\sigma$ is also $v$-parameter, then we can generate a distributive group, where the elements are $v$-parameter subgroups. In addition, $A_1,A_2,\ldots,A_\sigma$ are elements in this group.
\end{thm}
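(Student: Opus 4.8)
The plan is to let $\Gamma$ be the closure of $\{A_1,\dots,A_\sigma\}$ under the product operation $(X,Y)\mapsto X.Y$, i.e.\ the smallest family of order-$v$ subgroups of $G$ that contains $A_1,\dots,A_\sigma$ and is stable under forming pairwise products, and then to check that $\Gamma$ — its members regarded as points, subgroup-product as composition — satisfies Axioms \ref{Axiom1}--\ref{Axiom3}. Since $G$ has only finitely many subgroups of order $v$, this closure is reached after finitely many steps, so $\Gamma$ is finite; and applying Lemma~\ref{lem3} repeatedly along the construction shows that at every stage the pairwise products of the subgroups obtained so far are again $v$-class, so $\Gamma$ really is closed under products and consists of $v$-class subgroups. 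That is precisely Axiom~\ref{Axiom1}, and $A_1,\dots,A_\sigma\in\Gamma$ by construction, giving the last clause of the theorem.

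For Axiom~\ref{Axiom3} the key point is that one half of each distributive law already holds pointwise in $G$: for $a\in A_i$, $b\in A_k$, $c\in A_l$ one has $(a.b).c=(a.c).(b.c)$ with $a.c\in A_i.A_l$ and $b.c\in A_k.A_l$, so $(A_i.A_k).A_l\subseteq(A_i.A_l).(A_k.A_l)$ as \emph{sets}. Both sides are $v$-class subgroups lying in $\Gamma$ (the left one by closure applied to $A_i.A_k$ and then $A_l$, the right one by closure applied to $A_i.A_l$ and $A_k.A_l$), hence both have exactly $v$ elements; a containment of finite sets of equal size is an equality, so $(A_i.A_k).A_l=(A_i.A_l).(A_k.A_l)$. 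The left-hand identity $A_l.(A_i.A_k)=(A_l.A_i).(A_l.A_k)$ is obtained the same way from left distributivity in $G$.

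For Axiom~\ref{Axiom2}, fix $A_i\in\Gamma$ and consider the map $\Gamma\to\Gamma$, $X\mapsto A_i.X$, which is well defined by closure. It is injective: if $A_i.A_j=A_i.A_k$, pick $a\in A_i$; then $a.A_j$ is closed under the composition (by left distributivity, $(a.a').(a.a'')=a.(a'.a'')$) and, being a finite closed subset of $G$, is a subgroup, isomorphic to $A_j$, so it has $v$ elements and is contained in $A_i.A_j$, which also has $v$ elements; hence $a.A_j=A_i.A_j=A_i.A_k=a.A_k$, and cancelling $a$ on the left via Axiom~\ref{Axiom2} in $G$ gives $A_j\subseteq A_k$, so $A_j=A_k$ by symmetry. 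As $\Gamma$ is finite, an injective self-map is bijective, so $A_i.X=A_l$ has a unique solution in $\Gamma$; the equation $Y.A_i=A_l$ is treated symmetrically using $X\mapsto X.A_i$ and right cancellation. (The idempotent law $A.A=A$ for $A\in\Gamma$, not needed above, then falls out automatically since $A\subseteq A.A$ and $|A.A|=v$, in agreement with $\Gamma$ now being a distributive group.)

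The purely routine parts are the bookkeeping facts that $a.A_j$ and $A_j.a$ are genuine subgroups and that a finite composition-closed subset of a distributive group is a subgroup; the actual content is the two observations that let the counting argument bite: for Axiom~\ref{Axiom3} only one inclusion has to be proved because equality of cardinalities forces the other, and for Axiom~\ref{Axiom2} left/right multiplication is injective on $\Gamma$, so finiteness of $\Gamma$ promotes injectivity to surjectivity. I expect the main obstacle to be getting this finiteness cleanly in place — namely establishing, via Lemma~\ref{lem3} together with the finiteness of the set of order-$v$ subgroups of $G$, that the closure $\Gamma$ is a finite family all of whose iterated products stay $v$-class; once that is secured, Axioms \ref{Axiom1}--\ref{Axiom3} are forced.
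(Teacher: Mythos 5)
Your proposal is correct and follows essentially the same route as the paper: close the family $\{A_1,\dots,A_\sigma\}$ under products (Lemma~\ref{lem3} plus finiteness of $G$ keeping everything $v$-class), verify Axiom~\ref{Axiom2} by left/right cancellation combined with finiteness, and verify Axiom~\ref{Axiom3} by a counting argument on $v$-element sets. The only cosmetic difference is that for Axiom~\ref{Axiom3} the paper identifies the two sides by exhibiting the common element $(a_1^i.a_1^k).a_1^j=(a_1^i.a_1^j).(a_1^k.a_1^j)$ and invoking disjointness of distinct members of the closed system, whereas you use a one-sided set containment together with equal cardinality --- the same idea in a slightly different dress.
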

\begin{proof}
We complete the system $A_1,A_2,\ldots,A_\sigma$ with those products $A_i.A_k$ that are not included and get a new system $A_1,\ldots,A_{\sigma+\tau}$ that will be completed in the same way and so on until we arrive at a system 
\begin{equation}\label{eq18}
A_1,A_2,\ldots,A_\epsilon
\end{equation}
which is closed under the products $A_i.A_k$. Since $G$ is finite, such a system exists. In particular, since all groups $A_k,\ k=1,2,\ldots,\epsilon$ will have the form $A_k=\left[A_1\right]$ since $A_i.A_k$ is $v$-parameter and are therefore disjoint\footnote{see Theorem \ref{simplethm}}. Thus, Axiom \ref{Axiom1} holds for the system \eqref{eq18}.

Now let $A_i$ be a subgroup of \eqref{eq18}, then 
\begin{equation*}
A_i.A_k\text{ and } A_k.A_i\text{, respectively, with } k=1,2,\ldots,\epsilon
\end{equation*}
runs through all elements of the system \eqref{eq18} since from
\begin{equation*}
A_i.A_k=A_i.A_j
\end{equation*}
it follows that 
\begin{equation*}
A_1^i.A_k=A_1^i.A_j
\end{equation*}
and hence $A_k=A_j$, which proves that Axiom \ref{Axiom2} holds.

Finally, we will show one of the two relations of Axiom \ref{Axiom3}, e.g.
\begin{equation*}
(A_i.A_k).A_j=(A_i.A_j).(A_k.A_j)
\end{equation*}
The groups $(A_i.A_k).A_j$ and $(A_i.A_j).(A_k.A_j)$ are elements of the system \eqref{eq18} by Axiom \ref{Axiom1} and they are in fact identical, since both include the element $(a_1^i.a_1^k).a_1^j=(a_1^i.a_1^j).(a_1^k.a_1.j)$, which proves the Theorem.
\end{proof}
As an application, we add the following corollary.
\begin{cor}
One can always build a distributive group from $A$ and $A.p$, whose elements are subgroups of the form $A.h$ and of which $A$ and $A.p$ are elements.
\end{cor}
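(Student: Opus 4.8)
The plan is to deduce this directly from Theorem~\ref{thm2}, taking the seed system to consist of just the two subgroups $A_1 = A$ and $A_2 = A.p$. To invoke the theorem I must check its hypothesis: that every pairwise product $A_i.A_k$ with $i,k \in \{1,2\}$ is again a subgroup of class $v$, where $v = |A|$. There are four products to inspect.

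Two of them are self-products. Since $A.p$ is itself a subgroup of $G$ (Section~\ref{chapter2}), hence a distributive group, every one of its elements is idempotent, so closure under the composition forces $(A.p).(A.p) = A.p$; the same reasoning applied to $A$ gives $A.A = A$. Both are trivially of class $v$. The two mixed products are $A.(A.p)$ and $(A.p).A$, and these have class $v$ precisely by the standing assumption of this section that $G$ is a distinguished group. Hence all hypotheses of Theorem~\ref{thm2} are satisfied, and the theorem produces a distributive group $\Gamma$ whose elements are $v$-class subgroups of $G$, with $A$ and $A.p$ among them.

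It remains only to see that each element of $\Gamma$ is a subgroup of the form $A.h$. The closed system \eqref{eq18} built in the proof of Theorem~\ref{thm2} has the property that all its pairwise products remain of class $v$; this is exactly what is maintained through the completion process by Lemmas~\ref{lem2} and~\ref{lem3}. In particular, $A_1.A_k = A.A_k$ has class $v$ for every member $A_k$ of \eqref{eq18}, so Lemma~\ref{lem1} yields $A_k = \left[A\right]$, i.e.\ $A_k = A.h$ for a suitable $h \in G$. Thus every element of $\Gamma$ has the claimed shape, and $\Gamma$ is the desired group.

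I expect no genuine obstacle here: the content is entirely a matter of verifying that the two-element seed $\{A, A.p\}$ meets the hypothesis of Theorem~\ref{thm2} and then reading off the form $A.h$ from Lemma~\ref{lem1}. The only point that requires a moment's care is the self-product identity $(A.p).(A.p) = A.p$, which rests on the idempotence $x.x = x$ holding in the subgroup $A.p$ just as in $G$.
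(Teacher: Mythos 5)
Your proposal is correct and follows exactly the paper's route: the paper's entire proof is the one-line observation that $A_1=A$ and $A_2=A.p$ satisfy the hypotheses of Theorem~\ref{thm2}, and you have simply carried out the verification (the self-products via idempotence, the mixed products via the section's standing assumption, and the form $A.h$ via Lemma~\ref{lem1}) in full detail.
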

In fact, $A_1=A$ and $A_2=A.p$ fulfil the assumptions of Theorem \ref{thm2}.
\begin{defn}
Let $G$ be a group, then we call a subgroup $A$ of $G$ a \textit{maximal subgroup} if $G$ is completely described by the elements $a_1,a_2,\ldots,a_v$ of $A$ and another element $p\in G-A$ in the complement of $A$ in $G$. We denote this by $\left\{A,p\right\}$. Equivalently, $G$ is the smallest group such that $a_1,a_2,\ldots,a_v,p\in G$.
\end{defn}
Then $G\geq A+A.q_1+A.q_2+\cdots+A.q_\sigma$, $p\in A.q_1$ and since $\left\{A,p\right\}$ must be the same $G$ we have $G=A+Aq_1+\cdots+Aq_\sigma$.

Now set $A=G_1$, then the analogous holds for the maximal subgroup\footnote{In the original \textit{Maximalteiler}.} $A_1$ of $G$:
\begin{equation*}
A=A_1+A_1.r_1+A_1.r_2+\cdots+A_1.r_l
\end{equation*}
We can keep reducing like this, until we arrive at a simple group (which does not have a divider and hence no maximal subgroup). This is because every finite group must have a maximal subgroup, which is of order 1 for simple groups.

To show this, let $A$ be a subgroup of $G$ and let $\left\{A,p\right\}\neq G$ \footnote{$\left\{A,p\right\}$ is the smallest subgroup of $G$ with $A$ and $p$ as elements.}. Then $\left\{A,p\right\}=A_1>A$ is a proper subgroup of $G$. Then $\left\{A_1,p_1\right\}$ is either $G$ or again a proper divider $A_2$ of $G$, and so on.

Since $G$ is finite, this must eventually end.

\subsection*{Addendum 1}\footnote{We did not manage to prove that$(A.p).(A.q)$ is a group of the same order as $A$. We do not know if this is correct or not. The theorems of the Addenda were proved during our attempts to prove this conjecture.}
We denote the complex
\begin{equation}\label{eq19}
(A.p).(A.q)
\end{equation}
where $A$ is a simple group, with $H_{pq}$. All the following theorems for the complex $H_{pq}$ also hold in the case $H_{pq}=A.(p.q)$.
\begin{thm}\label{thm3}
$H_{pq}$ has either $v$ or $v^2$ elements, where $v$ is the order of $A$.
\end{thm}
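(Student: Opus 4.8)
The plan is to decompose the complex $H_{pq}$ into ``rows'' and ``columns'', observe that each of these is a simple subgroup of order $v$, and then read off the possible cardinalities from a dichotomy forced by simplicity of $A$; the subtle point will be excluding intermediate values in the case where the obvious collapse does not occur.

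First I would fix notation. Put $B:=A.p=\{b_i=a_i.p\}$ and $C:=A.q=\{c_i=a_i.q\}$; by the isomorphism result of Section~\ref{chapter2} both are uniquely isomorphic to $A$, hence simple of order $v$. Then $H_{pq}=B.C=\{b_i.c_j:1\le i,j\le v\}$, so trivially $|H_{pq}|\le v^2$. For a lower bound, the left-hand relation of Axiom~\ref{Axiom3} gives $(a_i.p).(a_i.q)=a_i.(p.q)$, so the ``diagonal'' $\{b_i.c_i:i\}$ is exactly $A.(p.q)=\{a_i.(p.q):i\}$, a subgroup of order $v$. Thus $v\le |H_{pq}|\le v^2$, and everything reduces to excluding $v<|H_{pq}|<v^2$.

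Next, the rows and columns. Axiom~\ref{Axiom3} gives $(b_i.c_j).(b_i.c_k)=b_i.(c_j.c_k)$ and $(b_i.c_j).(b_k.c_j)=(b_i.b_k).c_j$, so $R_i:=b_i.C$ and $S_j:=B.c_j$ are closed under composition, hence (being finite) subgroups, and the maps $c_j\mapsto b_i.c_j$, $b_i\mapsto b_i.c_j$ are unique isomorphisms, so $R_i\cong C$ and $S_j\cong B$ are simple of order $v$; moreover $H_{pq}=\bigcup_i R_i=\bigcup_j S_j$. I would then record two elementary facts: (a) $X.X=X$ for any finite subgroup $X$, since each $x\mapsto x.y$ with $y\in X$ is a bijection of $X$; (b) an intersection of two subgroups is a subgroup. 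From (a): if $b,b'\in B$ satisfy $b.C=b'.C=:C'$, then $(b.b').C\subseteq(b.C).(b'.C)=C'.C'=C'$, and since both sides have $v$ elements they coincide. Hence $K:=\{b\in B:b.C=b_1.C\}$ is closed under composition, so a subgroup of the simple group $B$, whence $K=B$ or $K$ is a singleton. Applying this to every row, the sets $\{b\in B:b.C=b_i.C\}$ are subgroups partitioning $B$, and since a simple group has no subgroups other than itself and singletons, either one of them is all of $B$ (so all rows coincide, $m:=\#\{\text{distinct rows}\}=1$) or all are singletons ($m=v$). The same dichotomy holds for columns with count $m'$. If $m=1$ then $H_{pq}=R_1$ and $|H_{pq}|=v$; likewise if $m'=1$.

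It remains to treat $m=m'=v$, where I claim the $v$ rows are pairwise disjoint, giving $|H_{pq}|=v^2$. The key sub-lemma is: if $R_a\neq R_b$ share a point $\zeta=b_a.\mu=b_b.\nu$ (necessarily $\mu\neq\nu$, else $b_a=b_b$), then $R_a=S_\nu$ and $R_b=S_\mu$, because $\zeta$ and $b_a.\nu$ are two \emph{distinct} points lying in both $R_a=b_a.C$ and $S_\nu=B.\nu$, and by Theorem~\ref{simplethm} two simple subgroups of order $v$ sharing two points must coincide. Now suppose some $R_1\cap R_2=\{z\}$ with $R_1\neq R_2$, say $z=b_1.u=b_2.w$; the sub-lemma gives $R_1=S_w$ and $R_2=S_u$, and then for \emph{every} $b\in B$ the point $b.w$ lies in $R_b\cap R_1$ (as $b.w\in b.C\cap B.w$), so the sub-lemma forces every row $R_b$ to be a column. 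Thus in this case the families of rows and of columns coincide, and one extracts a bijection $c^\ast\colon B\to C$ with $b.w=b_1.c^\ast(b)$ for all $b$, which is even an isomorphism $B\cong C$ by distributivity; I would then push this bookkeeping --- tracking $c^\ast$ against the coupling $b_i\leftrightarrow c_i$ inherited from $A$ and against further row/column coincidences --- to contradict $m=v$ (here the fact that no distributive group of order $2$ exists is used, so $v\ge 3$ and genuinely many rows are in play). The main obstacle is exactly this last step: the row/column decomposition and the $m\in\{1,v\}$ dichotomy are clean, but turning one coincidence of distinct rows into a full collapse to $m=1$ requires a careful cascade argument, and that is where I expect the bulk of the work to lie.
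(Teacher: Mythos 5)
Your setup is sound and, for its first half, close in spirit to the paper's own argument: the bounds $v\le|H_{pq}|\le v^2$ via the diagonal, the rows $R_i=b_i.C$ and columns $S_j=B.c_j$ as simple subgroups of order $v$, and the dichotomy $m,m'\in\{1,v\}$ obtained from the observation that $\{b\in B:b.C=b_1.C\}$ is a subgroup of the simple group $B$, are all correct. But the proof is not complete. In the case $m=m'=v$ you must show that two distinct rows (or columns) cannot meet in exactly one point; your two-point criterion from Theorem \ref{simplethm} only excludes overlaps of size at least two, and the ``cascade'' you sketch to rule out single-point overlaps is explicitly left unfinished (``that is where I expect the bulk of the work to lie''). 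As written, the intermediate cardinalities $v<t<v^2$ are not excluded, and that is precisely the content of the theorem. This is a genuine gap.

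The missing ingredient is already available in the paper: Theorem \ref{thm1}. Since $B=A.p$ is simple, $B.(B.x)=B.t$ by Theorem \ref{simplethm}, and Theorem \ref{thm1} then says that any two right translates $B.c_k$ and $B.c_{\bar k}$ are either identical or disjoint --- a one-point overlap already forces equality, so partial overlaps never occur. With this the proof closes in a few lines, exactly as in the paper: if $t<v^2$ there is a coincidence $(a_h.p).(a_k.q)=(a_{\bar h}.p).(a_{\bar k}.q)$, and Axiom \ref{Axiom2} forces $h\neq\bar h$ and $k\neq\bar k$; hence the distinct-index columns $(A.p).(a_k.q)$ and $(A.p).(a_{\bar k}.q)$ meet and therefore coincide; then your own stabilizer argument (the subgroup $\{x:(A.p).x=(A.p).(a_k.q)\}$ contains the two distinct elements $a_k.q$ and $a_{\bar k}.q$ of the simple group $A.q$, hence contains all of $A.q$) shows that all $v$ columns coincide, so $t=v$. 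In short: replace the hand-made treatment of single-point overlaps by an appeal to Theorem \ref{thm1}, and your column dichotomy finishes the argument immediately; the row analysis and the row-equals-column bookkeeping can be discarded.
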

\begin{proof}
We will show that if $H_{pq}$ has $t<v^2$ elements then $t=v$. If $t<v^2$, then there are two elements $(a_h.p).(a_k.q)$ and $(a_{\bar{h}}.p).(a_{\bar{k}}.q)$, where $h\neq\bar{h}$ and $k\neq\bar{k}$, of the form
\begin{equation}\label{eq20}
(a_h.p).(a_k.q)=(a_{\bar{h}}.p).(a_{\bar{k}}.q)
\end{equation}
But then by Theorem \ref{simplethm}
\begin{equation}\label{eq21}
(A.p).(a_k.q)=(A.p).(a_{\bar{k}}.q)
\end{equation}
holds. But since $k\neq\bar{k}$ and since $A.p$ is uniquely determined by two elements, it follows from \eqref{eq21} that
\begin{equation}\label{eq22}
(A.p).(a_1.q)=(A.p).(a_2.q)=\cdots=(A.p).(A.q)=A.(p.q)
\end{equation}
Hence, in this case $t=v$.
\end{proof}
\begin{rem}
If $H_{pq}$ has $v$ elements, then $H_{pq}=A.(p.q)$. If $H_{pq}$ has $v^2$ elements, then
\begin{equation}\label{eq23}
H_{p1}=(A.p).(a_1.q)+(A.p).(a_2.q)+\cdots+(A.p).(a_v.q)
\end{equation}
\end{rem}
\begin{thm}
$H_{pq}$ is a group.
\end{thm}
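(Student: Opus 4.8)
The plan is to start from Theorem~\ref{thm3}, which tells us that $H_{pq}$ has either $v$ or $v^{2}$ elements. If it has $v$ elements then, as recorded just above the statement, $H_{pq}=A.(p.q)$, which is the image of the subgroup $A$ under right-composition with the fixed element $p.q$; by the homogeneity results of \S\ref{chapter2} (the map $a_i\rightleftarrows a_i.(p.q)$ is a unique isomorphism of $A$ onto $A.(p.q)$), this is a subgroup of $G$, so there is nothing to prove. Assume from now on that $H_{pq}$ has $v^{2}$ elements.

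In this case it suffices to verify Axiom~\ref{Axiom1} for $H_{pq}$, i.e.\ that $H_{pq}$ is closed under the composition. Indeed, $H_{pq}$ is a finite subset of the distributive group $G$; once it is closed, then for every $a\in H_{pq}$ the maps $x\mapsto a.x$ and $x\mapsto x.a$ carry $H_{pq}$ into itself and are injective on it (they are injective on all of $G$, by cancellation from Axiom~\ref{Axiom2} for $G$), hence are bijections of the finite set $H_{pq}$, which is exactly Axiom~\ref{Axiom2} for $H_{pq}$; and Axiom~\ref{Axiom3} holds in $H_{pq}$ because it holds in $G$. So the whole statement reduces to closure.

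To establish closure I would use the decomposition \eqref{eq23}, $H_{pq}=C_{1}+C_{2}+\dots+C_{v}$ with $C_{k}:=(A.p).(a_{k}.q)$. Each $C_{k}$ is a right-translate of the simple subgroup $A.p$, hence is itself a simple subgroup of order $v$ uniquely isomorphic to $A$, and the $C_{k}$ are pairwise disjoint. The structural fact I would record first is
\begin{equation*}
(A.p).C_{k}=C_{k}\quad(k=1,\dots,v),\qquad\text{and indeed}\qquad (A.p).y=C_{k}\ \text{for every }y\in C_{k}.
\end{equation*}
This follows from Theorem~\ref{simplethm} applied to $P:=A.p$: one has $P.(P.c)=P.(\pi_{0}.c)$ for a fixed $\pi_{0}\in P$, so $P.\bigl(P.(a_{k}.q)\bigr)$ is a right-translate of $P$ through the point $(a_{1}.p).(a_{k}.q)$; but $C_{k}=P.(a_{k}.q)$ is also a right-translate of $P$ through that same point, and right-translates of a simple subgroup are disjoint or equal (Theorem~\ref{thm1}, whose hypothesis holds for $P$), so the two coincide; the second assertion then follows by comparing cardinalities. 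With this in hand, for $u,w\in H_{pq}$, say $u\in C_{i}$ and $w\in C_{j}$, I would expand $u.w$ by the two identities of Axiom~\ref{Axiom3} together with the merging identities $(a_{r}.p).(a_{s}.p)=(a_{r}.a_{s}).p$ and $(a_{r}.q).(a_{s}.q)=(a_{r}.a_{s}).q$ (special cases of Axiom~\ref{Axiom3}); for instance, writing $u=(a_{h}.p).(a_{i}.q)$ and distributing on the right,
\begin{equation*}
u.w=\bigl((a_{h}.p).w\bigr).\bigl((a_{i}.q).w\bigr),
\end{equation*}
in which $(a_{h}.p).w\in(A.p).C_{j}=C_{j}$ already sits in a column. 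The target is to show that the fully expanded product $u.w$ lies in a single column $C_{m}$ (hence in $H_{pq}$); equivalently, one may aim to show that each complex product $C_{i}.C_{j}$ is again $v$-class — i.e.\ one of the columns — so that $\{C_{1},\dots,C_{v}\}$ becomes a distributive group of subgroups by Theorem~\ref{thm2} and $\bigcup_{k}C_{k}=H_{pq}$ is automatically closed.

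The main obstacle is precisely this last step. The expansion unavoidably produces factors of the "wrong shape" — elements of $(A.q).(A.p)$, and of left-translates of $A.p$, that are not manifestly inside $H_{pq}$ — and one must show that these pieces recombine, via the coset structure of the simple subgroups $A.p$ and $A.q$ (Theorem~\ref{thm1}) together with the invariance $(A.p).C_{k}=C_{k}$ and its dual for the rows $D_{h}:=(a_{h}.p).(A.q)$, into one of the columns $C_{m}$. Making this recombination close up is the delicate heart of the argument; everything around it is formal, and once $u.w\in C_{m}\subseteq H_{pq}$ is proved for all $u,w$, the reduction of the second paragraph finishes the theorem.
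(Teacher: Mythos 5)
Your reduction to closure (Axiom~\ref{Axiom1}) and your handling of the $v$-class case are fine, and you have correctly identified the decisive target: show that each product of columns $C_r.C_s=\left[(A.p).(a_r.q)\right].\left[(A.p).(a_s.q)\right]$ is again a single column. But you then leave exactly this step open (``the delicate heart of the argument''), so the proposal is not a proof. Moreover, the one structural claim you do assert, $(A.p).C_k=C_k$, is false, and its justification contains a concrete error: you treat $P.y$ as ``the right-translate of $P$ through the point $y$,'' but $y\in P.y$ only when $y\in P$, since the unique solution of $\pi.y=y$ is $\pi=y$ (Axiom~\ref{Axiom2} together with $y.y=y$). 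Hence $P.\bigl((a_1.p).(a_k.q)\bigr)$ need not contain $(a_1.p).(a_k.q)$, and the two translates you want to identify need not meet; nothing in your argument uses the $v^2$ hypothesis, and the claim fails already in the group on $\mathbb{Z}_9$ with $a.b\equiv 2a-b \pmod 9$, $A=\{0,3,6\}$, $p=1$, $q=0$, where $C_k=\{1,4,7\}$ but $(A.p).C_k=\{0,3,6\}$.

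The missing step has a short proof, and it is the one the paper gives: apply Theorem~\ref{thm3} \emph{itself} to the simple subgroup $B=A.p$ and its two right-translates $B.(a_r.q)$ and $B.(a_s.q)$, so that $K_{rs}=C_r.C_s$ has either $v$ or $v^2$ elements; then exhibit a coincidence among the $v^2$ formal products. With $a_t=a_r.a_s$, both $\left[(a_t.p).(a_r.q)\right].\left[(a_t.p).(a_s.q)\right]$ and $\left[(a_r.p).(a_r.q)\right].\left[(a_s.p).(a_s.q)\right]$ equal $a_t.(p.q)$ --- the first by left-distributivity and $a_t.a_t=a_t$, the second by rewriting the factors as $a_r.(p.q)$ and $a_s.(p.q)$ and using right-distributivity --- and for $r\neq s$ these are products of distinct pairs of factors. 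Hence $K_{rs}$ has fewer than $v^2$ elements, so exactly $v$, so $K_{rs}=(A.p).\left[(a_r.a_s).q\right]=C_t\subseteq H_{pq}$, which is closure. This route needs neither the false invariance $(A.p).C_k=C_k$ nor Theorem~\ref{thm2}.
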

\begin{proof}
Assume $H_{pq}$ has $v$ elements, then $H_{pq}=A.(p.q)$ and is therefore a $v$-parameter group. Thus, we only have to show that $H_{pq}$ is a group if it has $v^2$ elements. From equation \eqref{eq23} if follows that
\begin{equation}\label{eq24}
H_{pq}=(A.p).(a_1.q)+\cdots+(A.p).(a_v.q)
\end{equation}
We denote the complex
\begin{equation}\label{eq25}
\left[(A.p).(a_r.q)\right].\left[(A.p).(a_s.q)\right]
\end{equation}
with $K_{rs}$ and will show that it has less than $v^2$ elements from which Theorem \ref{thm3} implies it is a $v$-parameter group of the form
\begin{equation}\label{eq26}
K_{rs}=(A.p).\left[(a_r.a_s).q\right]=(A.p).(a_t.q)
\end{equation} where
\begin{equation}\label{eq26a}
a_r.a_s=a_t
\end{equation}
On the one hand,
\begin{equation}\label{eq27}
\left[(a_t.p).(a_r.q)\right].\left[(a_t.p).(a_s.q)\right]=(a_t.p).\left[(a_r.a_s).q\right]=(a_t.p).(a_t.q))=q_t.(p.q)
\end{equation}
but on the other hand,
\begin{equation}\label{eq28}
\left[(a_r.p).(a_r.q)\right].\left[(a_s.p).(a_s.q)\right]=(a_r.a_s).(p.q)=a_t.(p.q)
\end{equation}
From \eqref{eq27} and \eqref{eq28} it follows that $K_{rs}$ has less than $v^2$ elements, i.e. it is of the form \eqref{eq26}. From \eqref{eq26} and \eqref{eq23},
\begin{equation}\label{eq29}
H_{pq}>K_{rs}
\end{equation}
i.e. $H_{pq}$ is a group.
\end{proof}
\begin{thm}
For the group $H_{pq}$ not only \eqref{eq23} holds but also
\begin{equation}\label{eq30}
H_{pq}=A.\tau_1+A.\tau_2+\cdots+A.\tau_v
\end{equation}
where $\tau_1=p.q$.
\end{thm}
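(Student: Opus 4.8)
We have $H_{pq}=(A.p).(A.q)$, and by the previous theorem it is a group; when $|H_{pq}|=v$ we already have $H_{pq}=A.(p.q)$ and the claim is immediate with the single term $A.\tau_1$, $\tau_1=p.q$. So assume $|H_{pq}|=v^2$. We already know the decomposition \eqref{eq23}, namely $H_{pq}=\sum_k (A.p).(a_k.q)$, where each summand is a coset of the simple group $A.p$. The plan is to produce the *other* family of $v$ disjoint cosets of $A$ itself, whose union is $H_{pq}$, with the first one equal to $A.(p.q)$.

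**Key steps.** First I would observe that $A.(p.q)\le H_{pq}$: indeed for each $a_i$ we have $(a_i.p).(a_i.q)=a_i.(p.q)$ by Axiom \ref{Axiom3}, and all of these lie in $(A.p).(A.q)$, so $A.(p.q)\subseteq H_{pq}$ and it is a $v$-class subgroup of the $v^2$-element group $H_{pq}$. Now, since $A$ is simple, Theorem \ref{simplethm} (applied inside the group $H_{pq}$, treating $A.(p.q)$ as the simple subgroup in the role of "$A$") gives that $(A.(p.q)).(H_{pq}.r)$ is again a $v$-class coset for any $r$; more to the point, Theorem \ref{thm1} applies to the subgroup $A.(p.q)$ of $H_{pq}$, because $A.(p.q)$ is simple and hence satisfies $A'.(A'.x)=A'.t$. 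Therefore $H_{pq}$ decomposes into cosets of $A.(p.q)$, i.e. of a group $v$-step isomorphic to $A$:
\begin{equation*}
H_{pq}=A.(p.q)+\bigl(A.(p.q)\bigr).s_2+\cdots+\bigl(A.(p.q)\bigr).s_v .
\end{equation*}
Each summand $\bigl(A.(p.q)\bigr).s_j$ equals $A.\bigl((p.q).s_j\bigr)$ by the definition of the composition of complexes together with Axiom \ref{Axiom3} (exactly the computation $(a_i.(p.q)).s_j=a_i.((p.q).s_j)$), so setting $\tau_1=p.q$ and $\tau_j=(p.q).s_j$ yields \eqref{eq30}.

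**The point requiring care.** The one thing that is not quite formal is the claim "$A.(p.q)$ is simple." This should follow because $A.(p.q)$ is uniquely isomorphic to $A$: by Proposition of section \ref{chapter2} (the map $a_i\rightleftarrows a_i.x$ is a unique isomorphism), $A.x$ has the same Cayley table as $A$ for any $x$, hence inherits simplicity, and $p.q$ is just a particular element $x$. Once $A.(p.q)$ is known to be simple inside $H_{pq}$, Theorem \ref{thm1} gives the coset decomposition of $H_{pq}$ by $A.(p.q)$, and the translation $\bigl(A.(p.q)\bigr).s=A.\bigl((p.q).s\bigr)$ finishes it. The main obstacle, such as it is, is just making sure the cosets $A.\tau_j$ are genuinely disjoint and exhaust $H_{pq}$: disjointness is Theorem \ref{thm1} applied to the simple (hence index-theorem-satisfying) subgroup $A.(p.q)$, and since $|H_{pq}|=v^2=v\cdot v$ there are exactly $v$ of them.
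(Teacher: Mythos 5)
There is a genuine gap, and it sits exactly where you flagged ``the point requiring care'' --- but the fatal step is not the simplicity of $A.(p.q)$ (that part is fine); it is the translation $\bigl(A.(p.q)\bigr).s_j=A.\bigl((p.q).s_j\bigr)$. The computation you offer for it, $(a_i.(p.q)).s_j=a_i.((p.q).s_j)$, is the \emph{associative} law, which is precisely the axiom distributive groups do not satisfy. The correct expansion via Axiom \ref{Axiom3} is $(a_i.(p.q)).s_j=(a_i.s_j).\bigl((p.q).s_j\bigr)$, so that $\bigl(A.(p.q)\bigr).s_j=(A.s_j).\bigl((p.q).s_j\bigr)$ --- a coset of $A.s_j$, not of $A$. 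Worse, this cannot be patched: the paper's own corollary following this theorem states that when $H_{pq}$ is $v^2$-class, a set of the form (coset of $A$).(single element), such as $(A.p).(a_l.q)$, meets each $A.\tau_k$ in \emph{exactly one} element, i.e.\ such sets are transversals of the $A$-coset decomposition rather than cosets of $A$. So your decomposition of $H_{pq}$ into cosets of $A.(p.q)$ (which is legitimate, via Theorem \ref{simplethm} and Theorem \ref{thm1}) produces the wrong family of $v$-element blocks; it does not yield \eqref{eq30}.

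The paper instead writes $A.p=A.(A.r)$ and $A.q=A.(A.s)$ and proves the two-sided inclusion $\left[A.(A.r)\right].\left[A.(A.s)\right]=A.\left[(A.r).(A.s)\right]$, the nontrivial direction coming from the identity $\left[a_l.(a_h.r)\right].\left[a_l.(a_{\bar h}.s)\right]=a_l.\left[(a_h.r).(a_{\bar h}.s)\right]$ (a correct use of left distributivity, with the \emph{same} outer factor $a_l$ on both sides, arranged via the $v$-class property of $A.(A.s)$). This exhibits $H_{pq}$ directly as a union of right cosets $A.\sigma$ with $\sigma\in(A.r).(A.s)$, and a count of elements reduces these to $v$ distinct ones. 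If you want to salvage your approach, you would need to show that the particular coset representatives $s_j$ can be chosen so that $(A.s_j).\bigl((p.q).s_j\bigr)$ collapses to a right coset of $A$ --- which is essentially the content you were trying to deduce, not a tool you may assume.
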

\begin{proof}
We have $A.p=A.(A.r)$ and $A.q=A.(A.s)$ and hence
\begin{equation}\label{eq31}
(A.p=.(A.q)=\left[A.(A.r)\right].\left[A.(A.s)\right]\geq A.\left[(A.r).(A.s)\right]
\end{equation}
But since
\begin{equation}\label{eq32}
\left[a_l.(a_h.r)\right].\left[a_{\bar{l}}.(a_{\bar{h}}.s)\right]=\left[a_l.(a_h.r)\right].\left[a_l.(a_{\bar{h}}.s)\right] \footnote{There is always an element $a_{\bar{\bar{h}}}.s$ in all groups $A.s$ for which $a_{\bar{l}}.(a_{\bar{h}}.s)=a_l.(a_{\bar{\bar{h}}}.s)$ holds, since $A.(A.s)$ is a $v$-parameter group}=a_{\bar{h}}.\left[(a_h.r).(a_{\bar{h}}.s)\right]
\end{equation}
it follows that
\begin{equation}\label{eq33}
\left[A.(A.r)\right].\left[A.(A.s)\right]\leq A.\left[(A.r).(A.s)\right]
\end{equation}
From \eqref{eq31} and \eqref{eq33} we get
\begin{equation*}
H_{pq}=(A.p).(A.q)=A.\left[(A.r).(A.s)\right]
\end{equation*}
and
\begin{equation}\label{eq34}
H_{pq}=A.\sigma_1+\cdots+A.\sigma_{v^2}=A.\tau_1+\cdots+A.\tau_v
\end{equation}
which is convincing once one chooses elements $\sigma_i$ from $(A.r).(A.s)$ one after another and takes into account that $H_{pq}$ only has $v^2$ elements.
\end{proof}
\begin{thm}
If for a certain $k$ in \eqref{eq30} and a certain $l$ in \eqref{eq23} the following relation
\begin{equation}\label{eq35}
A.\tau_k=(A.p).(a_l.q)
\end{equation}
holds, then $H_{pq}$ is of parameter $v$ and vice versa.
\end{thm}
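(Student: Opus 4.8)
The plan is to prove the equivalence by treating the two directions separately; the converse (``and vice versa'') is immediate, and the forward implication reduces to a short counting argument on the two decompositions \eqref{eq23} and \eqref{eq30}.

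First I would dispose of the converse. If $H_{pq}$ is of class $v$, then $H_{pq}=A.(p.q)$, so by \eqref{eq22} every ``column'' collapses, $(A.p).(a_l.q)=A.(p.q)$ for every $l$, and likewise the decomposition \eqref{eq30} reduces to the single term $A.\tau_1=A.(p.q)$, since $\tau_1=p.q$. Hence $A.\tau_1=(A.p).(a_1.q)$, so \eqref{eq35} holds (with $k=l=1$).

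For the forward direction I would argue by contradiction. Assume \eqref{eq35} holds for some $k,l$; we may take $v>1$, since for $v=1$ the complex $H_{pq}$ has at most $v^2=1$ element and is trivially of class $v$. Suppose $H_{pq}$ has $v^2$ elements. Each column $(A.p).(a_m.q)$ is isomorphic to $A.p$, hence to $A$, and so has exactly $v$ elements; since by \eqref{eq23} the $v$ columns cover the $v^2$-element set $H_{pq}$, they must be pairwise disjoint. For the same reason the $v$ leaves $A.\tau_1,\dots,A.\tau_v$ of \eqref{eq30} are pairwise disjoint, each of order $v$. The point of contact between the two decompositions is Axiom \ref{Axiom3}, which gives $a_i.(p.q)=(a_i.p).(a_i.q)$; this element lies in the column $(A.p).(a_i.q)$, so as $i$ runs over $1,\dots,v$ the elements $a_i.(p.q)$ lie in $v$ distinct columns, are therefore $v$ distinct elements, and constitute exactly one element from each column. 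Since $\tau_1=p.q$, this says that the leaf $A.\tau_1=A.(p.q)$ is a transversal of the column decomposition: it meets every one of the $v$ columns in precisely one point.

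Now I would bring in the hypothesis \eqref{eq35}, that the leaf $A.\tau_k$ equals the column $(A.p).(a_l.q)$. If $k=1$, then $A.\tau_1$ would be a single column, contradicting the fact that it meets each of the $v\ge 2$ mutually disjoint columns. If $k\ne 1$, then $A.\tau_1$ and $A.\tau_k$ are distinct leaves, hence disjoint, yet $A.\tau_1$ meets the column $(A.p).(a_l.q)=A.\tau_k$ — again a contradiction. Either way the assumption $|H_{pq}|=v^2$ is untenable, so $H_{pq}$ has $v$ elements, i.e.\ is of class $v$. The only step that needs care is the bookkeeping that makes \eqref{eq23} and \eqref{eq30} genuine partitions (this uses the $v^2$ hypothesis together with $|A|=v$); after that the argument is purely combinatorial, with the single identity $a_i.(p.q)=(a_i.p).(a_i.q)$ from Axiom \ref{Axiom3} doing all the real work by locking the ``diagonal'' leaf $A.\tau_1$ onto the columns.
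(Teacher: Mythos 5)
Your proof is correct and follows essentially the same route as the paper: both arguments hinge on the identity $a_i.(p.q)=(a_i.p).(a_i.q)$ from Axiom III, which forces the leaf $A.\tau_1$ to meet every column of \eqref{eq23}, and then derive a contradiction with the pairwise disjointness of the leaves $A.\tau_i$ in the $v^2$-class case once \eqref{eq35} identifies a leaf with a single column. Your explicit split into the cases $k=1$ and $k\neq 1$ is just a slightly more careful bookkeeping of the paper's remark that $A.\tau_k\neq A.\tau_1$, so there is nothing substantively different here.
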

\begin{proof}
We will first prove the first part of the theorem. For this, call the intersection of the two complexes $A$ and $B$ with $\vartheta(A.B)$. Let $H_{pq}$ be of parameter $v$, then for $h=1,2,\ldots,v$
\begin{equation}\label{eq36}
\vartheta(A.\tau_1,(A.p).(a_h.q))\geq a_h.(p.q)
\end{equation}
since $\tau_1=p.q$. Furthermore, $A.\tau_k\neq A.\tau_1$ since $\vartheta(A.\tau_k,(A.p).(a_h.q))=0$ for $h\neq l$. 

However, since $A.\tau_k=(A.p).(a_l.q)$, then by \eqref{eq36} we would have 
\begin{equation}\label{eq37}
\vartheta(A.\tau_1,A.\tau_k)\geq a_l.(p.q)
\end{equation}
This is a contradiction though, since all $A\tau_i$ are pairwise disjoint for $i=1,\ldots,v$ (if $H_{pq}$ has class $v^2$). Thus, $H_{pq}$ must be $v$-parameter.

The inverse is obvious\footnote{In the original \textit{evident}.}.
\end{proof}
\begin{cor}
If $H_{pq}$ has $v^2$ elements, then two subgroups $A.\tau_k$ and $(A.p).(a_l.q)$ have exactly one element in common.
\end{cor}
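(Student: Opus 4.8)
The plan is to read the corollary off the preceding theorem by contraposition, and then to upgrade ``at most one common element'' to ``exactly one'' by a pigeonhole count against the two decompositions of $H_{pq}$. Fix indices $k,l\in\{1,\dots,v\}$ and consider the two subgroups $A.\tau_k$ coming from \eqref{eq30} and $(A.p).(a_l.q)$ coming from \eqref{eq23}. The first is a translate of $A$ and the second a translate of the translate $A.p$ of $A$, so by the homogeneity established in section \ref{chapter2} each is uniquely isomorphic to $A$; in particular each is again a \emph{simple} subgroup of order exactly $v$, sitting inside $H_{pq}$.

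First I would show that $A.\tau_k$ and $(A.p).(a_l.q)$ have \emph{at most} one element in common. If they shared two distinct elements, then, being simple groups of the same order $v$, Theorem \ref{simplethm} (a simple group is completely described by any two of its elements) would force $A.\tau_k=(A.p).(a_l.q)$. But that is precisely the relation \eqref{eq35}, which by the preceding theorem would make $H_{pq}$ of class $v$, contradicting the standing hypothesis $\left|H_{pq}\right|=v^{2}$. Hence $\left|A.\tau_k\cap(A.p).(a_l.q)\right|\le 1$ for all $k,l$.

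Next I would show that they have \emph{at least} one common element, which is the substantive step. Since $\left|H_{pq}\right|=v^{2}$ and, as recorded in the proof of the preceding theorem, the $v$ sets $(A.p).(a_1.q),\dots,(A.p).(a_v.q)$ of \eqref{eq23} are pairwise disjoint and each of size $v$, they form a genuine partition of $H_{pq}$ into $v$ blocks of size $v$. The $v$ elements of $A.\tau_k\subseteq H_{pq}$ are therefore distributed among these $v$ blocks, and by the previous paragraph no block can absorb two of them. A counting argument (each of the $v$ blocks receives at most one of the $v$ elements, and every element lands in some block) forces each block to receive exactly one element of $A.\tau_k$; in particular $(A.p).(a_l.q)$ meets $A.\tau_k$, and in a single point. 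Combining the two paragraphs gives $\left|A.\tau_k\cap(A.p).(a_l.q)\right|=1$ for every $k$ and $l$, which is the assertion.

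The argument is short, and the only thing that really needs care is that one must use \emph{both} ingredients simultaneously: simplicity of the translates (via Theorem \ref{simplethm}) for the upper bound, and the fact that \eqref{eq23} and \eqref{eq30} are honest partitions of a $v^{2}$-element set into $v$ blocks of size $v$ for the lower bound via pigeonhole. Neither bound alone suffices, but both are already available from earlier in this section, so no new computation is required.
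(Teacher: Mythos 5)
Your argument is correct and follows the paper's own route: the upper bound of one common element comes from simplicity of the translates together with the preceding theorem (relation \eqref{eq35} would force $H_{pq}$ to be $v$-class), and the lower bound comes from counting against the two decompositions \eqref{eq23} and \eqref{eq30}. You have merely spelled out the pigeonhole step that the paper leaves implicit in the phrase ``this follows from the two decompositions.''
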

This follows from the two decompositions \eqref{eq23} and \eqref{eq30} since if they had two elements in common, both would be equal as simple groups and $H_{pq}$ would be $v$-parameter.
\begin{thm}\label{thm4}
From the relation
\begin{equation}\label{eq38}
\vartheta(H_{pq},H_{pr})\neq0
\end{equation}
it follows that $H_{pq}=H_{pr}$.
\end{thm}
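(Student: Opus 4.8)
The plan is to reduce everything to the partition of $G$ into translates of a fixed simple group. I will call a translate of $A$ an \emph{$A$-block} and a translate of $A.p$ a \emph{$p$-block}; by Theorem~\ref{simplethm} both $A$ and $A.p$ satisfy $X.(X.x)=X.t$, so by Theorem~\ref{thm1} the $A$-blocks partition $G$ and the $p$-blocks partition $G$, any two blocks of the same kind being equal or disjoint. Note also that, by \eqref{eq30} and the Remark following Theorem~\ref{thm3} (which gives $H_{pq}=A.(p.q)$ in the $v$-element case), $H_{pq}$ is always the disjoint union of the $A$-blocks it contains, and likewise $H_{pr}$.

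\emph{Step 1: produce a common $p$-block.} Suppose $c\in H_{pq}\cap H_{pr}$. Writing $c=(a_i.p).(a_j.q)$ and $c=(a_k.p).(a_l.r)$ exhibits $c$ inside the $p$-blocks $(A.p).(a_j.q)$ and $(A.p).(a_l.r)$; since $p$-blocks are equal or disjoint, these coincide. Call this block $E$; since $a_j.q\in A.q$ and $a_l.r\in A.r$, we have $E\subseteq(A.p).(A.q)=H_{pq}$ and $E\subseteq(A.p).(A.r)=H_{pr}$, and $|E|=v$.

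\emph{Step 2: show that the family of $A$-blocks inside $H_{pq}$ is exactly the family of $A$-blocks that meet $E$, and similarly for $H_{pr}$.} If an $A$-block $C$ meets $E$, a common point lies in $E\subseteq H_{pq}$, hence in one of the $A$-blocks comprising $H_{pq}$, which must equal $C$; so $C\subseteq H_{pq}$. Conversely, if $C$ is an $A$-block inside $H_{pq}$: when $|H_{pq}|=v^2$, $C$ is one of the $A.\tau_k$ of \eqref{eq30} and $E$ is one of the $(A.p).(a_l.q)$ of \eqref{eq23}, so by the Corollary above $C$ and $E$ have exactly one common element; when $|H_{pq}|=v$, a cardinality count forces $E=H_{pq}\supseteq C$, so again $C$ meets $E$. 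Running the identical argument for $H_{pr}$ with the \emph{same} block $E$, we conclude
\[
H_{pq}=\bigcup\{\,A\text{-blocks meeting }E\,\}=H_{pr},
\]
using in each case that the group is the union of the $A$-blocks it contains.

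The step I expect to be delicate is the converse inclusion in Step~2: that one already-found $p$-block $E$ is guaranteed to meet \emph{every} $A$-block that makes up $H_{pq}$, rather than sitting inside a proper sub-union. This is precisely where the Corollary preceding the theorem --- each ``column'' $(A.p).(a_l.q)$ meets each ``row'' $A.\tau_k$ in exactly one element --- is indispensable. Beyond that, the only thing to be careful about is that, since the composition is not associative, translates must never be manipulated algebraically (for instance $(A.p).x\ne A.(p.x)$ in general); all of the bookkeeping has to be carried out through the two partitions of $G$ furnished by Theorem~\ref{thm1}.
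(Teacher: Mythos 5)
Your argument is correct and follows essentially the same route as the paper: both produce the common $p$-block $E=(A.p).(a_k.q)=(A.p).(a_{\bar k}.r)$ from the shared element via equality-or-disjointness of translates of the simple group $A.p$, and then use the Corollary (each $A.\tau_k$ meets each $(A.p).(a_l.q)$ in exactly one element) to show every $A$-block of $H_{pq}$ meets $E\subseteq H_{pr}$ and hence coincides with an $A$-block of $H_{pr}$. Your only addition is the explicit cardinality argument for the $v$-element case, which the paper leaves implicit.
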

\begin{proof}
For $H_{pq}$ and $H_{pr}$ the following decompositions hold
\begin{equation}\label{eq39a}
H_{pq}=A.\tau_1+\cdots+A.\tau_v\text{ and } H_{pr}=A.\sigma_1+\cdots+A.\sigma_v\text{, respectively.}
\end{equation}
(where $A.\tau_i$ are not necessarily all distinct. The same holds for $A.\sigma_i$). So, by the assumption
\begin{equation}\label{eq39}
(a_h.p).(a_k.q)=(a_{\bar{h}}.p).(a_{\bar{k}}.r)
\end{equation}
from which we get
\begin{equation}\label{eq40}
(A.p).(a_k.q)=(A.p).(a_{\bar{k}}.r)
\end{equation}
We will now show that any group $A.\tau_i$ from $H_{pq}$ is identical with a group $A.\sigma_j$ from $H_{pr}$ if and only if $H_{pq}=H_{pr}$.

Let $A.\tau_i$ be this group, so it has at least one element $x$ in common with $(A.p).(a_k.q)$, and so, by \eqref{eq40}, also by $(A.p).(a_{\bar{k}}.r)$. On the other hand, the latter group does have this element $x$ in common with another $A.\sigma_j$ from $H_{pr}$.

Thus, $A.\tau_i$ and $A.\sigma_i$ have the element $x$ in common and are therefore equal.
\end{proof}
\begin{cor}
If an element $a_i\in A$, $A$ a group, is also an element in $(A.p).l$, then $(A.p).l=A$.
\end{cor}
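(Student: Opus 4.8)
The plan is to force both $A$ and $(A.p).l$ into a single complex $H_{pq}$ and then play the two decompositions of that complex against each other, much as in the proof of Theorem~\ref{thm4}.

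First some normalisations. The set $(A.p).l$ is a subgroup of order $v$: $A.p$ is a subgroup isomorphic to $A$ (opening remark of Section~\ref{chapter2}), and composing a subgroup with the fixed element $l$ again yields an isomorphic subgroup. Likewise $A=A.a_i$, because $x\mapsto x.a_i$ sends the finite set $A$ into itself injectively (Axioms~\ref{Axiom1} and~\ref{Axiom2}), hence onto itself; so $A$ is one of the cosets occurring in the decomposition $G=A+A.q_1+\cdots$ of Theorem~\ref{thm1} — which applies since $A$ is simple, so $A.(A.x)=A.t$ by Theorem~\ref{simplethm} — and any two of those cosets are either disjoint or identical.

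Now choose, by Axiom~\ref{Axiom2}, an element $q$ with $a_1.q=l$. Then $l=a_1.q\in A.q$, so $(A.p).l=(A.p).(a_1.q)$ is exactly the first summand of the decomposition \eqref{eq23} of $H_{pq}=(A.p).(A.q)$; in particular $a_i\in(A.p).l\subseteq H_{pq}$. Reading $H_{pq}$ instead via \eqref{eq30} as $A.\tau_1+\cdots+A.\tau_v$, the element $a_i$ lies in some summand $A.\tau_j$; but $A.\tau_j$ and $A=A.a_i$ are both cosets of $A$ sharing $a_i$, so $A.\tau_j=A$ by Theorem~\ref{thm1}. Thus $A$ is itself one of the blocks of $H_{pq}$, and it remains only to compare the two blocks $A=A.\tau_j$ and $(A.p).l=(A.p).(a_1.q)$, which share $a_i$. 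If $H_{pq}$ is of class $v$, then as in \eqref{eq22} all summands of \eqref{eq23} coincide with $A.(p.q)$, so $(A.p).l=A.(p.q)$; being a coset of $A$ that meets $A$ in $a_i$, it equals $A$ by Theorem~\ref{thm1}, and we are done.

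The point that really has to be settled is therefore that the other alternative of Theorem~\ref{thm3}, $|H_{pq}|=v^2$, does not occur here: in that case $A$ and $(A.p).l$ would be blocks of the two different decompositions \eqref{eq30} and \eqref{eq23}, hence by the Corollary to Theorem~\ref{thm4} would meet in a single element, giving $A\cap(A.p).l=\{a_i\}$ instead of $A=(A.p).l$. I expect this exclusion of the $v^2$-case to be the crux — everything else is bookkeeping with cosets — and it is of course immediate under the paper's standing conjecture that such complexes $(A.p).(A.q)$ are always of class $v$.
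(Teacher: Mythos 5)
There is a genuine gap, and you have located it yourself: your argument only concludes when $H_{pq}=(A.p).(A.q)$ is of class $v$, and nothing in your proposal rules out the $v^{2}$ alternative of Theorem~\ref{thm3}. Worse, your own reduction shows that in the $v^{2}$ case the blocks $A=A.\tau_{j}$ of \eqref{eq30} and $(A.p).l=(A.p).(a_{1}.q)$ of \eqref{eq23} would meet in exactly one element, so the route via the two decompositions of $H_{pq}$ cannot be completed: excluding the $v^{2}$ case when $A\cap\bigl((A.p).l\bigr)\neq\emptyset$ is not ``bookkeeping'' but is essentially the whole content of the corollary, and appealing to the paper's unproved conjecture that $(A.p).(A.q)$ is always $v$-class is not available.

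The paper avoids this entirely by applying Theorem~\ref{thm4} to a different pair of complexes, built over the base group $A.p$ rather than over $A$. Since $A.(A.p)=A.r$ is $v$-class (standing assumption of the section), Lemmas~\ref{lem1} and~\ref{lem2} give $A=(A.p).s$ for some $s$. Then $A.\bigl[(A.p).l\bigr]=\bigl[(A.p).s\bigr].\bigl[(A.p).l\bigr]$ and $A.A=\bigl[(A.p).s\bigr].\bigl[(A.p).s\bigr]=A$ are both $H$-complexes with the same first factor $(A.p).s$, and both contain $a_{i}=a_{i}.a_{i}$ (using $a_{i}\in A$ and $a_{i}\in(A.p).l$ together with idempotency). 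Theorem~\ref{thm4} then forces them to coincide; since $A.A=A$ is automatically $v$-class, so is $A.\bigl[(A.p).l\bigr]$, whence $A.\bigl[(A.p).l\bigr]=A=A.A$, and cancellation in the distributive group of $v$-class subgroups gives $(A.p).l=A$. The decisive idea you are missing is to compare against the trivially $v$-class complex $A.A$ over the base $A.p$, rather than trying to control the class of $(A.p).(A.q)$ itself.
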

\begin{proof}
In fact, since $A.(A.p)=A.r$, by Lemma \ref{lem2}, $A=(A.p).s$. The two complexes $A.\left[(A.p).h\right]=\left[(A.p).s\right].\left[(A.p).l\right]$ and $A.A=\left[(A.p).s\right].\left[(A.p).s\right]=A$ have the element $a_i$ in common and are therefore identical. Since $A.A$ has parameter $v$, the complex $A.\left[(A.p).l\right]$ must also have parameter $v$ and the following holds
\begin{equation*}
A.\left[(A.p).l\right]=A.A=A
\end{equation*}
i.e.
\begin{equation*}
(A.p).l=A
\end{equation*}
\end{proof}
\begin{thm}
For the subgroups in \eqref{eq30}, the following relation holds
\begin{equation}\label{eq41}
(A.\tau_k).(A.\tau_l)=A.(\tau_k.\tau_l)
\end{equation}
\end{thm}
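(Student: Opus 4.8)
Since $A$ is simple and, by the isomorphism $a\mapsto a.p$ established in Section~\ref{chapter2}, each coset $A.\tau_i$ is again a simple subgroup of order $v$, the complex $(A.\tau_k).(A.\tau_l)$ is itself of the kind $(A.x).(A.y)$, so Theorem~\ref{thm3} applies to it: it has either $v$ or $v^2$ elements, and in the first case the remark following Theorem~\ref{thm3} identifies it with $A.(\tau_k.\tau_l)$. The plan is therefore to show that the case of $v^2$ elements cannot occur.

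One inclusion is immediate. By the left-distributive relation of Axiom~\ref{Axiom3} we have $(a_i.\tau_k).(a_i.\tau_l)=a_i.(\tau_k.\tau_l)$, so as $a_i$ runs through $A$ these $v$ distinct elements all lie in $(A.\tau_k).(A.\tau_l)$ and together exhaust the coset $A.(\tau_k.\tau_l)$; hence $A.(\tau_k.\tau_l)\le(A.\tau_k).(A.\tau_l)$, and if the right-hand side has only $v$ elements it must coincide with $A.(\tau_k.\tau_l)$. (When $\tau_k$ and $\tau_l$ happen to lie in the same coset --- in particular when $k=l$ --- there is nothing to prove, since $(A.\tau_k).(A.\tau_k)=A.\tau_k$ and $\tau_k.\tau_k=\tau_k$.) Moreover, because $A.\tau_k$ and $A.\tau_l$ are subgroups of the group $H_{pq}$, the whole complex $(A.\tau_k).(A.\tau_l)$ is contained in $H_{pq}=A.\tau_1+\dots+A.\tau_v$, which has exactly $v^2$ elements; so were it to have $v^2$ elements it would be all of $H_{pq}$.

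To exclude this I would argue, exactly as in the proof that $H_{pq}$ is a group, by exhibiting a single ``collision'' $(a_m.\tau_k).(a_n.\tau_l)=(a_{\bar m}.\tau_k).(a_{\bar n}.\tau_l)$ with $m\ne\bar m$ and $n\ne\bar n$: one computes one and the same element of the complex in two ways from the two distributive laws (taking $a_m.a_n=a_t$ and using $a.a=a$ to see that $a_t\ne a_m$ and $a_t\ne a_n$ in the non-degenerate case), in the style of the double computation \eqref{eq27}--\eqref{eq28}. Any such collision, together with the fact that the simple group $A.\tau_k$ is determined by any two of its elements, collapses $(A.\tau_k).(A.\tau_l)$ to $v$ elements by the argument of Theorem~\ref{thm3}, and then the inclusion of the second paragraph forces it to equal $A.(\tau_k.\tau_l)$. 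I expect the genuine difficulty to lie precisely in producing that collision: a general complex $(A.x).(A.y)$ does \emph{not} collapse (it can have $v^2$ elements), so the argument must use that $\tau_k$ and $\tau_l$ lie in the \emph{same} $H_{pq}$ --- most naturally through the two decompositions \eqref{eq23} and \eqref{eq30} of $H_{pq}$ and the corollary that a subgroup $A.\tau_i$ and a subgroup $(A.p).(a_j.q)$ meet in exactly one element, feeding those ``grid'' coincidences through Axiom~\ref{Axiom3} as in \eqref{eq27}--\eqref{eq28}. Everything else is bookkeeping already carried out in the preceding theorems.
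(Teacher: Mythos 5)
Your reduction is sound as far as it goes: $(A.\tau_k).(A.\tau_l)$ is a complex of the form $(A.x).(A.y)$, so Theorem \ref{thm3} leaves only the alternatives of $v$ or $v^2$ elements; the containments $A.(\tau_k.\tau_l)\leq(A.\tau_k).(A.\tau_l)\leq H_{pq}$ are correct; and in the $v$-element case the identification with $A.(\tau_k.\tau_l)$ follows. But the entire content of the theorem is the exclusion of the $v^2$ case, and there your argument stops at a declaration of intent: you propose to exhibit a collision in the style of \eqref{eq27}--\eqref{eq28} and then concede that you do not see how to produce it. As you yourself observe, no purely formal computation with Axiom \ref{Axiom3} can supply one, since for general $x,y$ the complex $(A.x).(A.y)$ really can have $v^2$ elements; so the step you leave open is precisely the step that needs an idea. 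The proof is therefore incomplete.

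The missing move is not a collision at all but an application of Theorem \ref{thm4}. Suppose $(A.\tau_k).(A.\tau_l)$ were $v^2$-class; being contained in $H_{pq}$, which has exactly $v^2$ elements, it would equal $H_{pq}$ and in particular would contain the summand $A.\tau_k$ of the decomposition \eqref{eq30}. On the other hand $(A.\tau_k).(A.\tau_k)=A.\tau_k$. These two complexes are both of the form $(A.p').(A.q')$ with the \emph{same} first factor $A.\tau_k$ --- i.e.\ both are complexes $H_{\tau_k q'}$ --- and they meet, since each contains $A.\tau_k$; hence $\vartheta\left[(A.\tau_k).(A.\tau_l),(A.\tau_k).(A.\tau_k)\right]\neq0$, and Theorem \ref{thm4} forces $(A.\tau_k).(A.\tau_l)=A.\tau_k$, which is $v$-class, contradicting the assumption. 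That single observation closes the gap; the bookkeeping in your first two paragraphs then finishes the argument.
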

\begin{proof}
Since $H_{pq}$ is a group, $(A.\tau_k).(A.\tau_l)<H_{pq}$. If $A.\tau_k).(A.\tau_l)$ would be $v^2$-parameter, then
\begin{equation}\label{eq42}
(A.\tau_k).(A.\tau_l)=H_{pq}
\end{equation}
 but since $(A.\tau_k).(A.\tau_k)=(A.\tau_k)<H_{pq}$ we would have
\begin{equation}\label{eq43}
\vartheta\left[(A.\tau_k).(A.\tau_l),(A.\tau_k).(A.\tau_k)\right]=A.\tau_k
\end{equation}
so by Theorem \ref{thm4}
\begin{equation}\label{eq44}
(A.\tau_k).(A.\tau_l)=A.\tau_k
\end{equation}
that is $(A.\tau_k).(A.\tau_l)$ must be $v$-parameter which is a contradiction with the assumption that $(A.\tau_k).(A.\tau_l)$ is $v^2$-parameter. Thus, $(A.\tau_k).(A.\tau_l)$ must be $v$-parameter, i.e. equal to $A.(\tau_k.\tau_l)$.
\end{proof}
Let $G$ be a  group and $A$ be a simple subgroup of $G$, then the composition 
\begin{equation}\label{eq45}
G=A.l_0+A.l_1+\cdots+A.l_s\text{, with } l_0<A
\end{equation}
holds. We remind us that $A.(A.p)=A.t$ holds. Generally, $(A.l_j).(A.p)$ is not necessarily $v^2$-parameter. The groups $A.l_j$ with the property
\begin{equation}\label{eq46}
(A.l_j).(A.l_h)=A.(l_j.l_h)\text{ for each }h=0,1,\ldots,s
\end{equation}
of which $A$ is one of them shall be denoted by $\widetilde{A.l_j}$. Now let
\begin{equation}\label{eq47}
\widetilde{A.l_0}=A,\ \widetilde{A.l_1},\ldots,\widetilde{A.l_r}
\end{equation}
be the entirety of all those groups, then for them the following theorem holds.
\begin{thm}
The groups in \eqref{eq47} are elements of a distributive group $\Gamma$. Let $\Sigma$ be the subgroup of $G$ which includes all elements of $G$ that are present in elements of $\Gamma$.
\end{thm}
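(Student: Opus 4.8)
The plan is to obtain the first assertion as an immediate application of Theorem~\ref{thm2}, once the subgroups in \eqref{eq47} are seen to satisfy its hypotheses, and then to verify directly that $\Sigma$ is a subgroup of $G$.

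First I would record two facts about the system $\widetilde{A.l_0}=A,\widetilde{A.l_1},\dots,\widetilde{A.l_r}$. Each $\widetilde{A.l_j}$ is one of the cosets $A.l_0,\dots,A.l_s$, hence a side-group of the simple subgroup $A$; by the property from Section~\ref{chapter2} that $A.p$ is uniquely isomorphic to $A$ (same Cayley table), it is itself simple of order $v$, i.e.\ $v$-class. Secondly, any two of them have a $v$-class product: for a member $\widetilde{A.l_i}$ the defining condition asserts that $(A.l_i).(A.l_h)=A.(l_i.l_h)$ is $v$-class for \emph{every} index $h=0,1,\dots,s$; since the other member $\widetilde{A.l_k}$ is, as a coset, one of the $A.l_h$, the product $\widetilde{A.l_i}\cdot\widetilde{A.l_k}=(A.l_i).(A.l_k)$ is a $v$-class subgroup. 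Thus $\widetilde{A.l_0},\dots,\widetilde{A.l_r}$ form exactly the sort of system to which Theorem~\ref{thm2} applies — $v$-class subgroups whose pairwise products are all $v$-class — and that theorem (whose proof relies on Lemmas~\ref{lem1}--\ref{lem3} to preserve the $v$-class property under the closure process) furnishes a distributive group $\Gamma$ whose elements are $v$-class subgroups and among whose elements are $\widetilde{A.l_0},\dots,\widetilde{A.l_r}$. This gives the first assertion.

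For the second, put $\Sigma=\bigcup\{B:\ B\text{ an element of }\Gamma\}$, the set of all elements of $G$ occurring in some member of $\Gamma$. It is finite and inherits Axiom~\ref{Axiom3} from $G$, so it remains to check Axioms~\ref{Axiom1} and~\ref{Axiom2}. Axiom~\ref{Axiom1}: if $x\in B$ and $y\in C$ with $B,C$ elements of $\Gamma$, then $x.y\in B.C$, and $B.C$ is again an element of $\Gamma$ (Axiom~\ref{Axiom1} for $\Gamma$), so $x.y\in\Sigma$. Axiom~\ref{Axiom2}: given $x,z\in\Sigma$, with $x\in B$ and $z\in D$ for elements $B,D$ of $\Gamma$, use Axiom~\ref{Axiom2} in $\Gamma$ to get the element $C$ of $\Gamma$ with $B.C=D$; the map $c\mapsto x.c$ is a homomorphism by the left-hand law of Axiom~\ref{Axiom3} and injective by Axiom~\ref{Axiom2} in $G$, so $x.C$ is a $v$-element subgroup of $B.C=D$ and therefore equals $D$, whence $z=x.c$ for some $c\in C\subseteq\Sigma$; the equation $y.x=z$ is handled symmetrically with the element $C'$ of $\Gamma$ satisfying $C'.B=D$ and the map $c\mapsto c.x$ (right-hand law of Axiom~\ref{Axiom3}), and uniqueness of the solutions is Axiom~\ref{Axiom2} in $G$. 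Hence $\Sigma$ is a distributive subgroup of $G$.

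So the statement is essentially a repackaging of Theorem~\ref{thm2} plus the routine closure argument for $\Sigma$, and I do not expect a genuine obstacle; the only point that must be noticed is that the tilde-condition was imposed for \emph{all} cosets $A.l_h$ and so in particular governs products of two tilde-groups. (If one wanted the stronger statement that $\Gamma$ consists of exactly the tilde-groups, one would additionally have to prove that the product of two distinguished cosets is distinguished, i.e.\ that $\bigl(A.(l_i.l_k)\bigr).(A.l_h)$ is $v$-class for every $h$; that would be the hard part, calling for the distributive inclusion $(X.Y).Z\subseteq(X.Z).(Y.Z)$ together with the $H_{pq}$-dichotomy of Theorem~\ref{thm3} and the overlap results of Theorem~\ref{thm4} — but it is not needed for the theorem as stated.)
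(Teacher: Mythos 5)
Your reduction to Theorem~\ref{thm2} proves only a weakened reading of the statement, and the clause you set aside in your closing parenthesis as ``not needed'' is in fact the entire content of the paper's proof. The paper's $\Gamma$ is the system \eqref{eq47} itself: its proof consists of establishing the closure identity $(\widetilde{A.l_k}).(\widetilde{A.l_h})=\widetilde{A.(l_k.l_h)}$ (its \eqref{eq48}), i.e.\ that the product of two distinguished cosets is again a distinguished coset, after which Axioms~\ref{Axiom1}--\ref{Axiom3} for the system of tilde-groups follow as in Proposition~\ref{prop1} and Theorem~\ref{thm2}. This matters because $\Sigma$ is defined as the union of the elements of $\Gamma$ and is used in the very next theorem (``every distinguished group $G$ is either identical with $\Sigma$ or \dots'', with the footnote glossing $G=\Sigma$ as ``every subgroup $A.p$ is a $\widetilde{A.p}$''); that gloss is only correct if $\Gamma$ consists of exactly the tilde-cosets. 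Your $\Gamma$, produced by the completion process of Theorem~\ref{thm2}, could a priori acquire cosets that are not tilde-groups, and your $\Sigma$ would then be a different, larger subgroup than the one the sequel relies on.

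The missing step is not actually hard and needs none of the machinery you anticipate. From the defining condition, $(\widetilde{A.l_k}).(A.l_x)$ and $(\widetilde{A.l_h}).(A.l_x)$ are $v$-class for every $x$ (the paper's \eqref{eq49}), so by Lemma~\ref{lem1} one may write $\widetilde{A.l_k}=(A.l_x).p$ and $\widetilde{A.l_h}=(A.l_x).q$. Their product is $v$-class --- this is your own observation, the paper's \eqref{eq50} --- hence by the remark following Theorem~\ref{thm3} it equals $(A.l_x).(p.q)$. Therefore $\left[A.(l_k.l_h)\right].(A.l_x)=\left[(A.l_x).(p.q)\right].(A.l_x)$, which is $v$-class by the standing hypothesis that $G$ is distinguished; as $x$ was arbitrary, $A.(l_k.l_h)$ is itself a tilde-group. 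Supplying this paragraph turns your argument into essentially the paper's. Your verification that $\Sigma$ satisfies Axioms~\ref{Axiom1} and~\ref{Axiom2} is sound and is in fact more than the paper writes down.
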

\begin{proof}
We will show
\begin{equation}\label{eq48}
(\widetilde{A.l_k}).(\widetilde{A.l_h})=\widetilde{A.(l_k.l_h)}
\end{equation}
By the assumptions, we have
\begin{equation}\label{eq49}
\begin{cases}
(\widetilde{A.l_k}).(A.l_x)=A.(l_k.l_x)\\
(\widetilde{A.l_h}).(A.l_x)=A.(l_h.l_x)
\end{cases}
\end{equation}
so in particular,
\begin{equation}\label{eq50}
(\widetilde{A.l_k}).(\widetilde{A.l_h})=A.(l_k.l_h)
\end{equation}
From \eqref{eq49} we deduce that $\widetilde{A.l_k}=(A.l_x).p$ and $\widetilde{A.l_h}=(A.l_x).q$ and thus $(\widetilde{A.l_k}).(\widetilde{A.l_h})=(A.l_x).(p-q)$. We next have to show $\widetilde{A.(l_k.l_h)}$, i.e. $\left[A.(l_k.l_h)\right].(A.l_x)$ is $v$-parameter. In fact, $\left[A.(l_k.l_h)\right].(A.l_x)=\left[(A.l_x).(p.q)\right].(A.l_x)$ is also $v$-parameter.
\end{proof}
\begin{thm}\label{thm8}
Let $G$ be a distinguished group, $A$ a simple group of $G$ of order $v$ and let $R$ be a subgroup of $G$ which includes $A$ of order $v^2$. If
\begin{equation}\label{eq51}
R=A.l_1+A.l_2+\cdots+A.l_v\text{, with } l_1<A
\end{equation}
is a composition via the group $A$, then $A.l_h=\widetilde{A.l_h}$, $h=1,\ldots,v$.
\end{thm}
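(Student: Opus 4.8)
The statement $A.l_h=\widetilde{A.l_h}$ unwinds, by the definition preceding \eqref{eq47}, to: for every coset $A.l_x$ occurring in the decomposition \eqref{eq45} of $G$ one has $(A.l_h).(A.l_x)=A.(l_h.l_x)$. Since $A$ is simple, Theorem \ref{thm3} tells us $(A.l_h).(A.l_x)$ is a group with either $v$ or $v^2$ elements, and in the $v$-element case it is automatically $A.(l_h.l_x)$. So the whole theorem reduces to showing that $(A.l_h).(A.l_x)$ can never be $v^2$-class when $A.l_h$ is one of the $v$ cosets of $A$ lying inside $R$ (there are exactly $v$ of them: by Theorem \ref{thm1}, applied to the simple subgroup $A$ of $R$, the cosets of $A$ in $R$ are disjoint, and $|R|=v^2$, which is just the content of \eqref{eq51}).

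The plan is to run the argument inside cosets of $R$. Since $G$ is distinguished, $R.(R.m)=R.\mu$ for every $m$, and the cosets of $R$ partition $G$; moreover each such coset $R.m$ is, being a subgroup uniquely isomorphic to $R$, itself a group of order $v^2$, because $(r.m).(r'.m)=(r.r').m$. Because $A\subseteq R$ we have $A.l_x\subseteq R.l_x$, so $A.l_x$ is one of the $v$ cosets of $A$ sitting inside the order-$v^2$ subgroup $R.l_x$, while $A.l_h$ is one of the $v$ cosets of $A$ inside $R$. Hence
\begin{equation*}
(A.l_h).(A.l_x)\ \subseteq\ R.(R.l_x)\ =\ R.\mu,
\end{equation*}
an order-$v^2$ subgroup, and $A.(A.l_x)=(A.l_1).(A.l_x)=A.t$, as well as $A.(l_h.l_x)$, also lie inside $R.\mu$.

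Now suppose, toward a contradiction, that $H_{l_h,l_x}=(A.l_h).(A.l_x)$ is $v^2$-class. Being contained in the order-$v^2$ group $R.\mu$, it would then be all of $R.\mu$. I would exploit this coincidence through the two transversal decompositions of a $v^2$-class $H$-complex: the decomposition \eqref{eq30}, $H_{l_h,l_x}=A.\tau_1+\cdots+A.\tau_v$ into cosets of $A$, and the decomposition \eqref{eq23}, $H_{l_h,l_x}=(A.l_h).(a_1.l_x)+\cdots+(A.l_h).(a_v.l_x)$, together with the corollary that any $A.\tau_k$ and any $(A.l_h).(a_l.l_x)$ meet in exactly one point, and with Theorem \ref{thm4}. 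Comparing $H_{l_h,l_x}=R.\mu$ with the genuinely $v$-class complex $(A.l_1).(A.l_x)=A.t$, which also sits inside $R.\mu$ and shares elements with it, one is forced to identify a $v$-element subgroup with a $v^2$-element one — impossible. Therefore $(A.l_h).(A.l_x)$ is $v$-class for every $x$, i.e. $A.l_h=\widetilde{A.l_h}$ for $h=1,\dots,v$ (and so these cosets all lie in the group $\Gamma$ of the preceding theorem).

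The main obstacle is exactly this last step: turning "$H_{l_h,l_x}$ equals the fixed order-$v^2$ subgroup $R.\mu$" into a contradiction. This requires careful bookkeeping of \emph{which} coset of $A$ inside $R.\mu$ each of $A.t$, $A.(l_h.l_x)$ and the pieces of \eqref{eq23} and \eqref{eq30} falls into, plus a clean invocation of the transversality corollary and of Theorem \ref{thm4}; everything else — that cosets of $R$ are subgroups of order $v^2$, that $A.l_x\subseteq R.l_x$, and the inclusion into $R.(R.l_x)=R.\mu$ — is routine once one uses that $G$, hence $R$, is distinguished.
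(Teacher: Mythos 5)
Your strategy is in substance the paper's own: the paper likewise places the two complexes inside the order-$v^2$ group $R.(R.p_k)=R.p_l$ and then invokes Theorem \ref{thm4} to conclude that, since one of them ($A.(A.p_k)$ in \eqref{eq55}, respectively $A.(A.s_r^{(k)})$ in \eqref{eq56}) is $v$-class, the other must be $v$-class as well. Consequently the step you flag as the ``main obstacle'' is not an obstacle and needs none of the bookkeeping with \eqref{eq23}, \eqref{eq30} or the one-point-intersection corollary: if $(A.l_h).(A.l_x)$ were $v^2$-class it would exhaust $R.\mu$ and hence meet the $v$-class complex $(A.l_1).(A.l_x)=A.(A.l_x)$ contained in $R.\mu$, so Theorem \ref{thm4} would force the two to coincide, which is absurd on cardinality grounds alone; that one line finishes the proof. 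Two further remarks. First, your observation $A.l_x\subseteq R.l_x$ legitimately bypasses the paper's intermediate step, namely the proof of the decomposition \eqref{eq54} of each coset $R.p_k$ into cosets of $A$; the paper uses \eqref{eq54} only to locate an arbitrary coset $A.\tau$ inside some coset of $R$, which your containment gives directly, so this is a genuine (if mild) streamlining. Second, the one point requiring care: Theorem \ref{thm4} is stated for $H_{pq}$ and $H_{pr}$ sharing their \emph{first} factor $A.p$, whereas your two complexes share their \emph{second} factor $A.l_x$, so you need the left--right mirror of Theorem \ref{thm4}; this does hold, since the axioms and the definition of a distinguished group are left--right symmetric, and the paper commits exactly the same silent transposition at \eqref{eq56}, so you are no worse off than the original --- but it deserves a sentence.
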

\begin{proof}
Let
\begin{equation}\label{eq52}
G=R+R.p_1+\cdots+R.p_t
\end{equation}
From \eqref{eq51} it follows that
\begin{equation}\label{eq53}
R.p_k=(A.l_1).p_k+\cdots+(A.l_v).p_k=(A.p_k).(l_1.p_k)+\cdots+(A.p_k).(l_v.p_k),\ k=1,\ldots,t
\end{equation}
We will show that $R.p_k$ is of the form
\begin{equation}\label{eq54}
R.p_k=A.s_1^{(k)}+\cdots+A.s_v^{(k)}
\end{equation}
Since $A.(A.p_k)$ is $v$-parameter, by Lemmata \ref{lem1} and \ref{lem2}, $A=(A.p_k).\sigma$. Looking at the two complexes
\begin{equation}\label{eq55}
A.(A.p_k)=\left[(A.p_k).\sigma\right].(A.p_k)\text{ and }A.\left[(A.p_k).(l_r.p_k)\right]=\left[(A.p_k).\sigma\right].\left[(A.p_k).(l_r.p_k)\right],\ r=1,\ldots,v
\end{equation}
They are both in the group $R.(R.p_k)=R.p_l$, therefore, by Theorem \ref{thm4}, they are either equal or both $v$-parameter. But since the first complex is $v$-parameter, the second one is as well and the group $(A.p_k).(l_r.p_k)=(A.l_r).p_k$ is of the form $A.s_v^{(k)}$, i.e. we have
\begin{equation*}
R.p_k=A.s_1^{(k)}+\cdots+A.s_v^{(k)}
\end{equation*}
If $A.\tau$ is a subgroup of $G$., then by \eqref{eq52} and \eqref{eq54} it is equal to a subgroup $A.s_r^{(k)}$. By looking at the two complexes
\begin{equation}\label{eq56}
A.(A.s_r^{(k)})\text{ and } (A.l_k).(A.s_r^{(k)}),\ k=1,\ldots, v
\end{equation}
one can deduce in the same way as before that since $A.(A.s_r^{(k)})$ is $v$-parameter, $(A.l_k).(A.s_r^{(k)})$ $v$-parameter so that $A.l_k=\widetilde{A.l_k}$.
\end{proof}
\begin{thm}
Every distinguished group $G$ is either identical with $\Sigma$\footnote{i.e. every subgroup $A.p$ of the group $G$ is a $\widetilde{A.p}$ subgroup} or includes at least $v$ subgroups $\widetilde{A.l_k}$.
\end{thm}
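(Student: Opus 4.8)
The statement is a dichotomy, so I would fix the second horn as the goal: assuming $G\neq\Sigma$, show that at least $v$ of the cosets $A.l_j$ in the decomposition \eqref{eq45} are of the form $\widetilde{A.l_j}$. The first horn, $G=\Sigma$, needs no separate argument: it is just the negation of what follows, and one only records (from the preceding theorem) that $\Sigma$ is genuinely a subgroup and that, since $\Gamma$ is closed under products, every coset of $A$ lying inside $\Sigma$ is itself a $\widetilde{A.l}$ --- i.e. $\Sigma$ ``equals its own $\Sigma$''.

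So suppose $G\neq\Sigma$. Then at least one coset, say $A.p$, among $A.l_0=A,A.l_1,\dots,A.l_s$ is not one of the $\widetilde{A.l_k}$, which by the definition of that symbol means there is a coset $A.m$ with $(A.p).(A.m)$ of class $v^2$. Put $H=(A.p).(A.m)$. By Theorem~\ref{thm3} and the theorems following it, $H$ is a subgroup of order $v^2$ admitting a decomposition
\begin{equation*}
H=A.\tau_1+A.\tau_2+\dots+A.\tau_v
\end{equation*}
into $v$ pairwise disjoint cosets of $A$, with $(A.\tau_k).(A.\tau_l)=A.(\tau_k.\tau_l)$ for all $k,l$.

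The crucial step is to manufacture from $H$ an honest subgroup $R\leq G$ with $A\leq R$ and $|R|=v^2$. The natural attempt uses the observation that, by Axiom~\ref{Axiom3} together with Axiom~\ref{Axiom2}, the translation $x\mapsto c.x$ is an automorphism of $G$; hence $c.H$ is again a subgroup of order $v^2$, and $A\leq c.H$ exactly when the order-$v$ subgroup $\{x:c.x\in A\}$ lies inside $H$. Writing $A.p=A.(A.r)$, $A.m=A.(A.u)$ and invoking the identity $(A.p).(A.m)=A.\bigl[(A.r).(A.u)\bigr]$ proved in the decomposition theorem, one wants to choose $c$ so that this order-$v$ subgroup is one of the $A.\tau_i$ (or at any rate a subgroup of $H$). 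Verifying that such a $c$ exists --- equivalently, that a coset $A.p$ which fails to be $\widetilde{}$ must sit in \emph{some} subgroup of order $v^2$ through $A$ --- is the step I expect to be the main obstacle and the only point demanding genuine care.

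Granting such an $R$, the argument closes at once via Theorem~\ref{thm8}: in the coset decomposition $R=A.l_1'+\dots+A.l_v'$ over $A$ every summand satisfies $A.l_h'=\widetilde{A.l_h'}$, so $G$ contains the $v$ distinct subgroups $\widetilde{A.l_1'},\dots,\widetilde{A.l_v'}$, which is precisely the second horn. Combining the two cases proves the theorem.
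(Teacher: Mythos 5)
You have reconstructed the correct architecture of the argument --- if $G\neq\Sigma$ there is a $v^2$-class complex $H_{pq}=(A.p).(A.q)$, and the goal is to convert it into a subgroup of order $v^2$ \emph{containing} $A$ so that Theorem~\ref{thm8} can be applied --- but you explicitly leave unproved the one step you yourself identify as ``the main obstacle,'' namely the existence of a translate of $H_{pq}$ that contains $A$. That is precisely the content of the paper's proof, so as it stands your proposal has a genuine gap rather than a complete alternative route.

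The paper closes this gap with a short, concrete construction that you should be able to supply. Since $\tau_1=p.q$, the coset $A.\tau_1=A.(p.q)$ lies inside $H_{pq}$. Pick any $a_1,a_i\in A$ and, by Axiom~\ref{Axiom2}, the unique $l$ with $\left[a_1.(p.q)\right].l=a_i$. Then the coset $\left[A.(p.q)\right].l$ contains the element $a_i\in A$, and the Corollary to Theorem~\ref{thm4} (``if $(A.p).l$ meets $A$ then $(A.p).l=A$'') upgrades this single common element to the identity $\left[A.(p.q)\right].l=A$. Hence $A\subseteq (H_{pq}).l$, and $(H_{pq}).l$ is a subgroup of order $v^2$ because right translation by $l$ is an isomorphism of $G$ onto itself (the homogeneity results of section~\ref{chapter2}). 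Theorem~\ref{thm8} then yields the $v$ subgroups $\widetilde{A.\sigma_k}$. Note two differences from your sketch: the paper uses a \emph{right} translation $x\mapsto x.l$ rather than your left translation $x\mapsto c.x$, and it does not need to analyse the preimage set $\left\{x:c.x\in A\right\}$ at all --- the corollary does the work of promoting one incidence $a_i\in\left[A.(p.q)\right].l$ to full containment of $A$. Your condition ``the order-$v$ subgroup $\left\{x:c.x\in A\right\}$ lies inside $H$'' is harder to verify than what is actually needed, and without the corollary you have no mechanism for producing the required $c$.
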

\begin{proof}
If for each $(A.p)$ and $(A.q)$,
\begin{equation}\label{eq57}
(A.p).(A.q)=A.(p.q)
\end{equation}
then $G=\Sigma$.
If this is not the case, then there is at least one subgroup $A.p$ so that $(A.p).(A.q)=H_{pq}$ is $v^2$-parameter. Let $l\in G$ such that
\begin{equation}\label{eq58}
\left[a_1.(p.q)\right].l=a_i\ a_1,a_i\in A
\end{equation}
The group $(H_{pq}).l$ includes $A$, by the Corollary to Theorem \ref{thm4}, i.e.
\begin{equation}\label{eq59}
(H_{pq}.l)=A+A.\sigma_1+\cdots+A.\sigma_{v-1}
\end{equation}
Thus, $(H_{pq}).l$ has the properties of the group $R$ of Theorem \ref{thm8}, that is
\begin{equation*}
A.\sigma_k=\widetilde{A.\sigma_k},\ k=1,2,\ldots v
\end{equation*}
\end{proof}
\begin{rem}
If $G$ is a distinguished group of order $N$, with $N=\prod_{i=1}^np_i$ where $p_i$ are prime numbers with $p_i\neq p_k$ for $i\neq k$, then for $G$ the following relation holds for $G$
\begin{equation}\label{eq60}
(A,p).(A.q)=A.(p.q)
\end{equation}
for all simple subgroups $A$ of $G$.

In fact, if that would not hold true, then $(A.p).(A.q)=H_{pq}$ be a $v^2$-parameter group. Since $G$ is a distinguished group, then by Theorem \ref{simplethm} $v^2$ must divide $N$ which is a contradiction. Thus, \eqref{eq60} holds. But if \eqref{eq60} holds for all simple subgroups of $G$, then it holds for all subgroups of $G$, which is easy to show.

Since every symmetric distributive group $G$ of order $N=\prod_{i=1}^np_i$, with $N$ not divisible by $3$, is a distinguished group, \eqref{eq60} holds for all symmetric groups.
\end{rem}
\subsection*{Addendum 2}
\subsubsection*{A bit about the structure of distributive groups}
Let $G=\left\{a_1,a_2,\ldots,a_v\right\}$ be a distributive group, then from any two elements $a_1$ and $a_2$ we can create the following $l$-cycle (\textit{left} cycle) $a_1.a_2=a_3, a_1.a_3=a_4,\ldots,a_1.a_{h-1}=a_h$. All $a_1,a_2,\ldots,a_h$ shall be different while $a_1.a_h$ should be equal to one of $a_1,a_2,\ldots,a_h$.

$a_1.a_h$ must be different to both $a_1$ and $a_h$, so if $a_1.a_h=a_i$, $i>1$ and we claim that $i=2$; if $i>2$, then from $a_1.a_h=a_i$, $a_h=a_1.a_{h-1}$ and $a_i=a_i.a_{i-1}$ it would follow that $a_1.a_{h-1}=a_{i-1}$; thus, already $a_1.a_{h-1}=a_h=a_{i-1}$ would be equal to one of $a_1,a_2,\ldots,a_{h-1}$ which is a contradiction.

Thus, the cycle looks as follows:
\begin{equation}\label{eq61}
a_1,a_2,a_3=a_1.a_2,a_4=a_1.a_3, \ldots, a_h=a_1.a_{h-1}, a_2=a_1.a_h
\end{equation}
We also get the following relation between $a_1$ and $a_2$:
\begin{equation*}
a_1^{h-1}.a_2=a_2
\end{equation*}
The equation $a_1^{h-1}.x=x$ holds for the elements of a subgroup which includes the elements of the cycle \eqref{eq61}.

Furthermore, the following relation exists between any two elements of this subgroup,
\begin{equation*}
a^{h-1}.b=b,\text{ so in addition, }a_2^{h-1}.a_1=a_1\text{ and so on.}
\end{equation*}
\begin{defn}
We call $h-1$ the \textit{degree} of the $l$-cycle $a_1,a_2$. Let $A=\left\{a_1,a_2,\ldots,a_v\right\}$ be a simple group, then the degree $G$ of the $l$-cycle of any two elements $a_p,a_q$, $p\neq q$ is a characteristic invariant of this simple group, i.e. it is independent of the choice of $a_p$ and $a_q$.
\end{defn}
\begin{proof}
Any two elements $a_p,a_q\in A$ have a certain $l$-degree $g_{pq}$; since $g_{pq}$ is a natural number, there must a smallest one. Let $a_i,a_k$ be a combination whose cycle-degree is $g$, then $a_i^g.a_k=a_k$. The elements $x$ that solve the equation $a_i^g.x=x$, constitute a subgroup of $A$ which, since it includes both $a_i$ and $a_k$, must necessarily be equal to $A$, since $A$ is simple. Thus,
\begin{equation*}
a_p^g.a_q=a_q\text{ for all elements of } A
\end{equation*}
i.e. any two elements $a_p,a_q\in A$ have the cycle-degree $g$.
\end{proof}
Let $A$ be a simple group of order $N$ and $l$-cycle degree $g$. Then, using $a_1$ and $a_2$, we generate the cycle
\begin{equation}\label{eq62}
a_1,a_2,a_3=a_1.a_2,\ldots,a_{g+1}=a_1.a_g (a_2=a_1.a_{g+1})
\end{equation}
If not all elements in $A$ have been included in this cycle, let $a_{g+2}$ be such an element, not included in \eqref{eq62}. We generate
\begin{equation}\label{eq63}
a_1,a_{g+2}, a_{g+3}=a_1.a_{g+2},\ldots,a_{2g+1}=a_1.a_{2g} (a_{g+2}=a_1.a_{2g+1})
\end{equation}
If $a_i=a_{g+k}, i,k=2,3,\ldots,g+1$, then we would have that
\begin{equation*}
a_1.a_i=a_i+1=a_1.a_{g+k})a_{g+k+1}
\end{equation*}
and finally $a_{g+2}=a_l$, a contradiction. Thus, apart from $a_1$, \eqref{eq62} and \eqref{eq63} are disjoint.

If there are further elements of $A$ which are neither in \eqref{eq62} nor \eqref{eq63}, then we generate another cycle with $a_1,a_{2g+2}$ until every element in $A$ is in one cycle. Every cycle \eqref{eq62}, \eqref{eq63}, $\ldots$ includes $g+1$ elements and thus $N=\sigma g+1$, with $\sigma\in\mathbb{N}$ or, equivalently,
\begin{equation*}
N\equiv1\mod g
\end{equation*}
\begin{thm}
Let $N$ be the order and $g$ be the cycle degree of a simple group, then 
\begin{equation*}
N\equiv1\mod g
\end{equation*}
\end{thm}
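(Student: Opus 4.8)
The plan is to turn the cycle decomposition sketched just before the statement into a clean partition argument. Fix the simple group $A$ of order $N$ and single out one element, say $a_1$. By the invariance of the cycle degree for simple groups established above, every pair $a_p,a_q$ with $p\neq q$ has the same $l$-cycle degree $g$; in particular, starting from $a_1$ together with any second element $a_2\notin\{a_1\}$ and iterating the map $x\mapsto a_1.x$, we obtain a cycle
\begin{equation*}
a_1,\ a_2,\ a_3=a_1.a_2,\ \ldots,\ a_{g+1}=a_1.a_g,\qquad a_2=a_1.a_{g+1},
\end{equation*}
consisting of exactly $g+1$ distinct elements, the closing relation landing back on $a_2$ (the second entry) by the argument surrounding \eqref{eq61}.

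Next I would show that any two such cycles through $a_1$ overlap only in $a_1$. Suppose that, after building the first cycle, some element $a_{g+2}\in A$ remains unlisted; build a second cycle $a_1,\ a_{g+2},\ a_{g+3}=a_1.a_{g+2},\ \ldots$ of length $g+1$ in the same way. If an entry $a_{g+k}$ of the second cycle (with $2\le k\le g+1$) coincided with an entry $a_i$ of the first, then applying the uniqueness of the solution of $a_1.x=(\cdot)$ from Axiom \ref{Axiom2} (left form) repeatedly would force the immediate predecessors to agree, and walking backwards around both cycles would eventually force $a_{g+2}$ itself into the first cycle, a contradiction. A coincidence with $a_1$ is impossible, since $a_1.x=x$ together with $a_1.a_1=a_1$ and Axiom \ref{Axiom2} forces $x=a_1$. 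Hence the two cycles share only $a_1$.

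Finally I would iterate: as long as uncovered elements of $A$ remain, start a fresh cycle through $a_1$; by the previous step each new cycle contributes exactly $g$ genuinely new elements (everything in it except $a_1$), and since $A$ is finite the process terminates after $\sigma$ cycles for some positive integer $\sigma$. Counting yields $N=\sigma g+1$, i.e. $N\equiv 1\mod g$. The main obstacle is the disjointness step: one must check that the backward left-cancellation argument never runs off the end of a cycle, i.e. that all the indices involved stay in the range $2,\ldots,g+1$ where the cycle behaves like a genuine orbit of $x\mapsto a_1.x$, and that the collision cannot be absorbed into $a_1$; once that is handled, the remainder is bookkeeping built on the already-available invariance of $g$.
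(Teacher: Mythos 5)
Your proposal is correct and follows essentially the same route as the paper: decompose $A$ into $l$-cycles through the common base point $a_1$, each contributing $g$ new elements, with disjointness enforced by the cancellation of Axiom \ref{Axiom2}, giving $N=\sigma g+1$. The only cosmetic difference is that you walk backwards around the cycles to derive the contradiction where the paper iterates forwards; since each cycle is a closed orbit of $x\mapsto a_1.x$, the two are equivalent.
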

The same holds for the $r$-degree, the degree of the \textit{right} cycle; we only have to consider the relation $a_i\circ a_k=a_k.a_i$ instead of $a_i.a_k$ for which the system $A=\left\{a_1,a_2,\ldots,a_v\right\}$ is a distributive group as well.

In general, the $l$ degree is not equal to the $r$ degree.

\begin{proof}
Let $A=\left\{a_0,a_1,\ldots,a_v\right\}$ be a system whose elements create only one cycle,
\begin{equation*}
a_0,a_1,a_2=a_0.a_1,a_3=a_0.a_2,\ldots,a_v=a_0.a_{v-1} (a_0.a_v=a_1)
\end{equation*}
We will show that right-sided distributivity follows from left-sided distributivity and Axioms \ref{Axiom1} and \ref{Axiom2}. Since homogeneity holds for such systems, it is sufficient to show that 
\begin{equation*}
(a_i.a_k).a_0=(a_i.a_0).(a_i.a_0)
\end{equation*}
We denote with $a_\varrho$ the element for which
\begin{equation*}
a_0.a_1=a_\varrho.a_0
\end{equation*}
holds. Via left-sided composition with $a_0$ we get
\begin{equation}\label{eq64}
\begin{cases}a_0.a_{\left[t\right]}=a_{\left[\varrho+t-1\right]}.a_0,& a_{\varrho+t-1}=a_k\text{ fixed}, t=k+1-\varrho\\
a_0.a_{\left[k+1-\varrho\right]}=a_{\left[k\right]}
\end{cases}
\end{equation}
where $\left[i\right]$ is the element for which 
\begin{equation*}
\left[i\right]\equiv i\mod v
\end{equation*}
holds. Now, it follows from \eqref{eq64} and \eqref{eq63} that
\begin{eqnarray*}
(a_t.a_0).(a_s.a_0)&=\left[a_0.a_{\left[t+1-\varrho\right]}\right].\left[a_0.a_{\left[s+1-\varrho\right]}\right]&=a_0.\left[a_{\left[t+1-\varrho\right]}-a_{\left[s+1-\varrho\right]}\right]\\
&=a_0.a_\mu&=a_{\left[\varrho+\mu-1\right]}.a_0
\end{eqnarray*}
where 
\begin{equation*}
a_\mu=a_{\left[t+1-\varrho\right]}.a_{\left[s+1-\varrho\right]}
\end{equation*}
Composed left-sided $(\varrho-1)$ times with $a_0$, this give
\begin{equation*}
a_t.a_s=a_{\left[\mu+\varrho-1\right]}
\end{equation*}
and so \eqref{eq63} becomes
\begin{equation*}
(a_t.a_0).(a_s.a_0)=(a_t.a_s).a_0
\end{equation*}
\end{proof}

\appendix
\section{Translated definitions}
In this section we will present the three definitions that are not used in the form of this paper anymore. We first give the German original definitions and then their respective translations.
All the formatting and spelling has been copied exactly as in the originals.

The translations of the, now obsolete, "names" has been chosen to be as close as possible to their meaning in German. It is, however, not a literal translation. Should a reader recall the correct translation, we would be grateful if they could send a short note to the translator.
\subsection{Isomorphismen}\label{chap:isomorph}
This part is taken from \cite{zbMATH03005477}, Chapter 1 part 11, \textit{Isomorphismen} $\left[\text{isomorphisms}\right]$. 
\begin{defn}
\textit{Sind $\mathfrak{F}$ und $\mathfrak{F}'$ zwei Gruppen, und ist jedem Element $F$ aus $\mathfrak{F}$ ein bestimmtes Element $F'=I(F)$ aus $\mathfrak{F}'$ so zugeordnet, dass stets
\begin{equation*}
I(F_1)I(F_2) = I(F_1F_2)
\end{equation*}
ist und durchl\"auft dabei $F'$ alle Elemente von $\mathfrak{F}'$ wenn $F$ alle Elemente von $\mathfrak{F}$ durchl\"auft, so heißt die Gruppe $\mathfrak{F}'$ isomorph zu $\mathfrak{F}$ und die Abbildung selbst ein Isomorphismus}. Entspricht hierbei zu jedem Element $F'$ auch nur ein einziges Element $F$, so heisst $\mathfrak{F}'$ zu $\mathfrak{F}$ \textit{einstufig isomorph}, anderenfalls \textit{mehrstufig isomorph}.
\end{defn}
Translated, this becomes
\begin{defn}\label{defn:isomorph}
Let $\mathfrak{F}$ and $\mathfrak{F}*$ be two groups and let there be a mapping from each element $F$ in $\mathfrak{F}$ to each element $F=I(F)'$ in $\mathfrak{F}'$ for which
\begin{equation*} 
I(F_1)I(F_2) = I(F_1F_2)
\end{equation*}
holds, and if $F'$ runs through all elements of $\mathcal{F}'$ if $F$ runs through all elements of $\mathfrak{F}$, then the group $\mathfrak{F}'$ is isomorphic to $\mathfrak{F}$ and the map $I$ is an isomorphism. If for each element $F'$ there is exactly one element $F$, then $\mathfrak{F}'$ and $\mathfrak{F}$ are \textit{uniquely isomorphic}, otherwise \textit{v-step isomorphic}.
\end{defn}
\subsection{$R$-gliedrige Gruppe}
This part is taken from \cite{Lie1874}.
\begin{defn}
Der Begriff einer \textit{Gruppe von Transformationen}, welcher zun\"achst in der Zahlentheorie und in der Substitutionstheorie seine Ausbildung fand, ist in neuerer Zeit verschiedentlich auch f\"ur geometrische, resp. allgemeine analytische Untersuchungen verwendet worden. Man sagt von einer Schaar von Transformationen
\begin{equation*}
x_i'=f_i(x_1,\ldots,x_n,a_1,\ldots,a_r)
\end{equation*}
(wobei die $x$ die urspr\"unglichen, die $x'$ die neuen Voriablen (sic!) und die $a$ Parameter bedeuten, die im folgenden stets \textit{continuirlich} ver\"anderlich gedacht werden), dass sie eine \textit{r-gliedrige Gruppe} bilden, wenn irgend zwei Transformationen der Schaar zusammengesetzt wieder eine der Schaar angeh\"orige Transformation ergeben, wenn also aus den Gleichungen
\begin{equation*}
x_i'=f_i(x_1\cdots x_n\;\alpha_1\cdots \alpha_r)
\end{equation*}
und
\begin{equation*}
x_i''=f_i(x'_1 \cdots x'_n\;\beta_1\cdots\beta_r)
\end{equation*}
hervorgeht:
\begin{equation*}
x_i''=f_i(x_1\cdots x_n\;\gamma_1\cdots\gamma_r)
\end{equation*}
unter den $\gamma$ Gr\"ossen verstanden, die nur von den $\alpha,\beta$ abh\"angen.
\end{defn}
Translated, this becomes
\begin{defn}\label{defn:rgliedrig}

$\left[\ldots\right]$ A set of transformations
\begin{equation*}
x_i'=f_i(x_1,\ldots,x_n,\alpha_1,\ldots,\alpha_r)
\end{equation*}
(where the $x$ are the old, $x'$ are the new variables and the $\alpha$ are parameters which are thought to be continuous) constitutes an \textit{r-parameter group} if the composition of any two transformations in this set is also in this set, that is if from the equations
\begin{equation*}
x_i'=f_i(x_1,\ldots,x_n,alpha_1,\ldots,alpha_r)
\end{equation*}
and
\begin{equation*}
x_i''=f_i(x'_1,\ldots,x'_n,\beta_1,\ldots,\beta_r)
\end{equation*}
it follows that
\begin{equation*}
x_i''=f_i(x_1,\ldots,x_n,\gamma_1,\ldots,\gamma_r)
\end{equation*}
with parameters $\gamma$ that only depend on $\alpha,\beta$.
\end{defn}

\begin{bibdiv}
\begin{biblist}[\normalsize]
\bib{original}{article}{
    author = {Burstin, C.},
    author={Mayer, W.},
    title = {{Distributive Gruppen von endlicher Ordnung.}},
    fjournal = {{Journal f\"ur die Reine und Angewandte Mathematik}},
    journal = {{J. Reine Angew. Math.}},
    issn = {0075-4102; 1435-5345/e},
    volume = {160},
    pages = {111--130},
    year = {1929},
    publisher = {Walter de Gruyter, Berlin},
    language = {German},
    doi = {10.1515/crll.1929.160.111},
    zbl = {55.0082.06}
}
\bib{zbMATH03005477}{book}{
    author = {Reidemeister, Kurt},
    title = {{Einf\"uhrung in die kombinatorische Topologie.}},
    year = {1932},
    language = {German},
    howpublished = {{Braunschweig: Friedr. Vieweg \& Sohn A.-G. XII, 209 S. (1932).}},
    zbl = {0004.36904}
}
\bib{Lie1874}{article}{
    author = {Lie, Sophus},
    title = {Ueber Gruppen von Transformationen},
    journal = {Nachrichten von der K\"onigl. Gesellschaft der Wissenschaften und der Georg-Augusts-Universit\"at zu G\"ottingen},
    year = {1874},
    language = {German},
    volume= {1874},
    pages={529-542},
    url = {http://eudml.org/doc/179698}
}
\end{biblist}
\end{bibdiv}
\end{document}